\documentclass[a4paper, 12pt, final]{amsart}
\usepackage{amssymb,amsthm,amsmath, amsfonts}
\usepackage{graphicx}
\usepackage{hyperref}
\usepackage{fullpage}

\usepackage{xcolor}
\definecolor{vry}{RGB}{253, 231, 37}

\definecolor{vrg}{RGB}{94,201,98}

\definecolor{vrdg}{RGB}{33, 145, 140}

\definecolor{vrb}{RGB}{59,82,139}

\definecolor{vrp}{RGB}{68,1,84}

\definecolor{vro}{RGB}{249,142,9}

\definecolor{vrr}{RGB}{188,55,84}

\definecolor{vrnb}{RGB}{13,8,135}

\hypersetup{colorlinks=true, linkcolor=vrnb, citecolor=vrp, filecolor=vrr, urlcolor=vrr}

\newtheorem{theorem}{Theorem}[section]
\newtheorem{proposition}[theorem]{Proposition}
\newtheorem{lemma}[theorem]{Lemma}

\newtheorem{corollary}[theorem]{Corollary}

\newtheorem{definition}[theorem]{Definition}

\newtheorem{question}[theorem]{Question}
\newcommand{\round}[1]{\left(#1\right)} % example of author macro
\newcommand{\TV}{\operatorname{TV}}
\newcommand{\Des}{\operatorname{Des}}

\newcommand{\des}{\operatorname{des}}
\newcommand{\Asc}{\operatorname{Asc}}
\newcommand{\asc}{\operatorname{asc}}
\newcommand{\Vl}{\operatorname{Vl}}

\newcommand{\vl}{\operatorname{vl}}

\newcommand{\E}{\mathbb E}
\newcommand{\Pbb}{\mathbb P}

\newcommand{\R}{\mathbb R}

\newcommand{\PhiStd}{\Phi}
\newcommand{\Var}{\mathbb{V}}

\newcommand{\norm}[1]{\left\|#1\right\|}
\newcommand{\Ent}{\operatorname{Ent}}
\DeclareMathOperator{\sep}{sep}

\title{Cutoff for asymmetric shelf shuffle}
\author[R. Tripathi]{Raghavendra Tripathi }
\address{Raghavendra Tripathi. Division of Science, NYU Abu Dhabi, Abu Dhabi, UAE}
\email{r.tripathi@nyu.edu}
\thanks{I thank Prof. Persi Diaconis for his encouraging comments and for suggesting several useful references. I also thank Prof. Jason Fulman for answering some questions regarding~\cite{DiaconisFulmanHolmes2013}.}
\begin{document}

\begin{abstract}
A mechanical shuffler consists of $m$ shelves. A deck of $n$ cards, arranged in increasing order, is dealt from the bottom sequentially. Each card is assigned a shelf uniformly at random and placed on the top (bottom) of the existing pile with probability $p$ ($1-p$) independently. We refer to this as asymmetric shelf-shuffle. We find the law $\nu_{n, m}^{(p)}$ of the permutation induced by the asymmetric shelf-shuffle and show that the pair consisting of the number of descents and the number of valleys is a sufficient statistic. This generalizes a result of Diaconis, Fulman, and Holmes (Ann. Appl. Prob., 2013) corresponding to the case $p=1/2$. For $p=1/2$, Chen and Ottolini (ECP, 2025) established the cutoff in the total variation distance near $\lfloor n^{5/4}\rfloor$. We establish the cutoff for the asymmetric shelf shuffle.
Let $\nu_n$ be the uniform measure on the set of all permutations $S_n$ of $\{1, \ldots, n\}$. For a fixed $p\neq 1/2$ and $c>0$, we show that 
\[\TV\left(\nu_{n, \lfloor cn^{3/2}\rfloor }^{(p)}, \nu_n\right)=1-2\Phi\left(-\frac{|2p-1|}{4\sqrt{3}c}\right)+O_{c, p}(n^{-1/2})\;.\]
We also establish the cutoff in the separation distance near $m\approx n^{2}$ and in the relative entropy near $m=n^{3/2}$. In both cases, we also obtain the cutoff profile explicitly. 

{\bf Keywords.} shelf-shuffle; cutoff; card shuffling; asymmetric shelf-shuffle; limit profile

\end{abstract}

\maketitle
%%%%%%%%%%%%%%%

\section{Introduction}
\label{sec:Intro}
Card shuffling provides a rich source of problems that have real-life applications as well as deep mathematical underpinnings.  We refer the reader to the recent book~\cite{DiaconisFulman2023Shuffling} by Diaconis and Fulman for a modern and comprehensive account of various card shuffling schemes and their connections with other areas of mathematics. One of the very first questions about any shuffling scheme is how good or effective a shuffling scheme is. There are several ways to measure the quality of a shuffling scheme, but the most commonly used measure is the (total variation) distance between the law of the permutation induced by a shuffling scheme and the uniform measure on the set of all permutations of the deck. For the riffle-shuffle, Bayer and Diaconis~\cite{bayer1992trailing} famously proved that seven shuffles suffice to be sufficiently close to the uniform distribution. We refer the reader to~\cite[Table 1.2]{DiaconisFulman2023Shuffling} for a table of total variation distance after repeated riffle shuffle. Many shuffling schemes exhibit a cutoff phenomenon: the total variation distance abruptly decreases from $\approx 1$ to $\approx 0$ in a short window. The term cutoff was introduced by Aldous and Diaconis in the context of card shuffling~\cite{aldous1986shuffling}. For the asymmetric riffle shuffle, the cutoff phenomenon was recently established by Sellke~\cite{sellke2022cutoff}. The cutoff phenomenon for Markov chains has now become an active area of research in its own right~\cite{salez2025modern}. 

In this paper, we prove the cutoff phenomenon and obtain the cutoff profile for asymmetric shelf-shuffles. We begin with the definition of asymmetric shelf shuffle introduced in~\cite{kuba}. 
\begin{definition}[Asymmetric shelf-shuffle]
Let $m\in \mathbb{N}$ be a natural number and $p\in (0, 1)$. An $(m, p)$-shelf shuffler has $m$ labelled shelves. A deck of cards labelled $1, \ldots, n$ is dealt from the bottom sequentially, and it is assigned a shelf independently and uniformly at random. Each card is placed at the top or bottom of the existing pile, on the assigned shelf, with probability $p$ and $1-p$, respectively, independently of everything else. The shuffled deck is then formed by joining the piles in the sequential order. 
\end{definition}
When $p=1/2$, we refer to this as the \emph{symmetric shelf-shuffle} or simply as \emph{shelf-shuffle}. The symmetric shelf-shuffling schemes are often used by the mechanical shufflers in casinos (with $m\approx 10$). A casino company using the shelf shuffler was interested in understanding the performance of the shelf shuffler after \emph{a single shuffle}. The company hired Diaconis, Fulman, and Holmes to study this shuffling scheme, who were the first to mathematically analyze the shelf shuffle~\cite{DiaconisFulmanHolmes2013}. Among several other things, they proved that the number of valleys is a sufficient statistic for the symmetric shelf-shuffle. Let $\nu_{n, m}\equiv \nu_{n, m}^{(p)}$ denote the law of the permutation induced by $(m, p)$-shelf shuffler with $n$ cards. Diaconis, Fulman, and Holmes~\cite{DiaconisFulmanHolmes2013} proved
 \begin{equation}
 \label{eqn:LawSymShuffle}
\nu_{n, m}^{(1/2)}(\pi) = \frac{4^{v(\pi)+1}}{2(2m)^n} \sum_{a=0}^{m-1}\binom{n+m-a-1}{n}\binom{n-1-2v(\pi)}{a-v(\pi)},
\end{equation}
where $v(\pi)=|\{1<i<n: \pi_{i-1}>\pi_i<\pi_{i+1}\}|$  is the number of valleys in permutation $\pi$. Chen and Ottolini~\cite{chen2025cutoff} established the cutoff phenomenon for the symmetric shelf shuffle, answering a question in~\cite[Problem 2, Section 11.5]{DiaconisFulman2023Shuffling}. More precisely, let $\nu_n$ be the uniform measure on $S_n$. Chen and Ottolini~\cite{chen2025cutoff} proved that for any $c>0$
\begin{equation}
\label{eqn:SymCutoff}
\TV(\nu_{n, \lfloor cn^{5/4}\rfloor}, \nu_n) = 1-2\Phi\round{-\frac{1}{12c^2\sqrt{10}}}+O_c\round{\frac{1}{\sqrt{n}}},
\end{equation}
where $\Phi(t) = \frac{1}{\sqrt{2\pi}}\int_{-\infty}^{t}e^{-x^2/2}\,dx$ is the cumulative distribution function of a standard Gaussian, and $\TV(\cdot, \cdot)$ denotes the total variation distance between two probability measures. Here and throughout this paper, we use $f_n=O(g_n)$ to denote that $|f_n|\leq C|g_n|$ for some constant $C>0$. We use the subscript {$O_{\alpha_1,\ldots,\alpha_k}$} to emphasize the dependence of the constant $C$ on {parameters $\alpha_1,\ldots,\alpha_k$}. 

Very recently, Clay~\cite{clay2025limit} studied the descents and inversions in the symmetric shelf-shuffle. Card guessing games for the symmetric shelf shuffle with a single shelf were investigated in~\cite{clay2025guessing, tripathi2026position}. The asymmetric shelf-shuffle was introduced in~\cite{kuba}, where results like CLT, local CLT, and the LDP for the number of correct guesses in complete feedback games for $(1, p)$-shelf shuffles are proved. Despite this recent attention, several natural questions about the shelf-shuffle remain open and offer a promising avenue for future research.

In this paper, we investigate the mixing properties of the asymmetric shelf shuffle.  We determine the law of the permutation induced by an asymmetric shelf shuffle and deduce that the pair $(\text{number of valleys}, \text{number of descents})$ is a sufficient statistic for the asymmetric shelf shuffle. In the special case when $p=1/2$, {our formula recovers}~\cite[Theorem 3.1]{DiaconisFulmanHolmes2013}.

Using this, we prove a cutoff phenomenon (with respect to the number of shelves) for the asymmetric shelf-shuffle. Somewhat surprisingly, the cutoff location in this case is near $m=\lfloor n^{3/2}\rfloor$ as opposed to $\lfloor n^{5/4}\rfloor$ in the symmetric case.
Following Chen and Ottolini~\cite{chen2025cutoff}, we show that the permutation induced by an asymmetric shelf-shuffle is almost an exponential tilt of the uniform measure on $S_n$. When $p\neq 1/2$, the dominant term in the exponential tilt is due to the number of descents, and there is a lower-order term due to the number of valleys. The tilt due to the number of descents vanishes when $p=1/2$, and the only contribution comes from the number of valleys, as it was observed in~\cite {chen2025cutoff}. In an upcoming paper by the author, we study the statistics of the asymmetric shelf shuffle. 

\subsection{Setup and main results}
Before we state our results, we fix some necessary notations. Let $\nu_{n, m}^{(p)}$ denote the law of permutation induced by an $(m, p)$-shelf shuffle on a deck of $n$ cards. Throughout our discussion, we will assume that $p\in (0, 1)$ and $q:=1-p$. For a permutation $\pi=(\pi_1, \ldots, \pi_n)\in S_n$, we define the descent set and the valley set, respectively, as
\[
\Des(\pi) = \{i\in [n-1]: \pi_{i}>\pi_{i+1}\}, \quad \Vl(\pi) = \{2\leq i\leq n-1: \pi_{i-1}>\pi_i<\pi_{i+1}\}\;.
\]
We also define the ascent set $\Asc(\pi) := [n-1]\setminus\Des(\pi)$. We write $\des(\pi) = |\Des(\pi)|$, $\asc(\pi) = |{\Asc}(\pi)|$, and $\vl(\pi) = |\Vl(\pi)|$ for the number of descents, ascents, and valleys in a permutation $\pi$, respectively. Our first result determines the law of the permutation induced by an asymmetric shelf shuffle.

\begin{theorem}
\label{thm:Law}
For $\pi\in S_n$, let 
\[
\mathcal{B}(\pi):=\{B\subseteq [n-1]: \forall j \in \Vl(\pi), \{j-1, j\}\not\subseteq B\}. 
\]
Then, for any $\pi\in S_n$ we have
\begin{equation}
\label{eqn:LawAsym}
\nu_{n, m}^{(p)}(\pi) = \frac{1}{m^n}\sum_{B\in \mathcal{B}(\pi)}\binom{m}{{n-|B|}}p^{|\Asc(\pi)\cap B|}q^{|\Des(\pi)\cap B|}\;.
\end{equation}
\end{theorem}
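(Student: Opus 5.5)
The idea is to describe the output of the shuffler explicitly and then count the ways of producing a given $\pi$ by summing over the set $B$ of adjacencies that come from the same shelf. A realization of the $(m,p)$-shelf shuffle consists of a shelf label $s_k\in[m]$ for each card $k$ and a top/bottom choice for each card. Each such realization has probability $m^{-n}$ times a product of factors $p$ and $q$. The first step is to describe the pile on a single shelf. The cards on a shelf arrive in increasing order. Those placed on top end up, read from one end, in monotone order; then comes the first card dealt to that shelf, whose choice is irrelevant; then those placed on the bottom, in the opposite monotone order. So each nonempty pile, read in the order used to form the deck, is a unimodal word on consecutive positions of $\pi$. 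I will fix the reading convention so that the extremum of each pile is a peak. Then a block of consecutive positions $\{a,\dots,b\}$ of $\pi$ can be a pile exactly when it contains no interior valley. That is, there is no $j\in\Vl(\pi)$ with $j-1,j,j+1$ all in the block.

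Next I encode the decomposition of positions $1,\dots,n$ into nonempty piles by the set $B\subseteq[n-1]$ of indices $i$ such that positions $i$ and $i+1$ lie in the same pile. The condition that no pile contains an interior valley is then exactly $B\in\mathcal B(\pi)$, since $\{j-1,j\}\subseteq B$ means $j-1,j,j+1$ lie in one pile. The number of nonempty piles is $n-|B|$. The piles appear in the deck in shelf order, so a decomposition $B$ can be realized by assigning its $n-|B|$ blocks, in order, to an increasing sequence of shelves. This can be done in $\binom{m}{n-|B|}$ ways, and each choice determines $s_k$ for every card. Conversely, a realization producing $\pi$ determines $B$ and the set of occupied shelves uniquely. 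This injectivity, that distinct pairs (shelf set, $B$) give distinct realizations, is what makes the sum in \eqref{eqn:LawAsym} have no overcounting. I will check it carefully.

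It remains to compute the total weight of top/bottom choices that produce a given pile. Within a pile, the positions and the set of cards are fixed, and the arrangement is forced. Every card other than the first card dealt to that shelf (the extremal one) must have the specific choice putting it on the correct side. The extremal card contributes $p+q=1$. I then biject the non-extremal cards of a pile with the elements of $B$ inside that pile. Each edge $\{i,i+1\}\in B$ is associated with whichever of $\pi_i,\pi_{i+1}$ is farther from the peak. On the rising side every edge is an ascent and the associated card went, say, on top (weight $p$). On the falling side every edge is a descent and the card went on the bottom (weight $q$). The orientation convention is chosen to match the statement. This gives weight $p^{|\Asc(\pi)\cap B|}q^{|\Des(\pi)\cap B|}$, and summing over $B\in\mathcal B(\pi)$ yields \eqref{eqn:LawAsym}.

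The main obstacle is bookkeeping of conventions. The direction of dealing and of reading piles determines whether piles are peak- or valley-shaped and whether ascents receive $p$ or $q$. I would first verify the formula for $n=2,3$ with $m=1$, and for $p=1/2$ against \eqref{eqn:LawSymShuffle}, to pin these down.
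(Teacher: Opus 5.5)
Your argument is essentially the paper's proof. The paper also encodes admissible partitions into valley-free consecutive blocks by $B\in\mathcal B(\pi)$ (Lemma~\ref{lem:BlockStructure}) and counts $\binom{m}{n-|B|}$ choices of occupied shelves. It then uses that, apart from the first card on each shelf, top placements sit at ascents and bottom placements at descents (Lemma~\ref{lem:AscDes}); this is exactly your ``farther from the peak'' bijection between non-extremal cards and edges of $B$.

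One step as you wrote it is wrong, and it is not a reading convention. The deck is in increasing order and dealt from the bottom, so cards reach each shelf in \emph{decreasing} order. The first card on a shelf is therefore its largest card, and this is what makes every pile increasing-then-decreasing, i.e.\ valley-free. If cards arrived in increasing order, as you state, the first card would be the minimum and every pile would be decreasing-then-increasing. Changing the reading direction cannot repair this, because reversing a word sends valleys to valleys (position $j$ goes to $k+1-j$) and peaks to peaks. You would end up proving a formula with peaks in place of valleys, and with the roles of $p$ and $q$ swapped.

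With the dealing order corrected, the rest goes through as you wrote it. Read each pile from top to bottom, as in the paper's Lemma~\ref{lem:AscDes}. Then a card placed on top is followed by a larger card, so it sits at an ascent and carries weight $p$; the model forces this assignment, so it is not something you choose to match the statement.
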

For a permutation $\pi\in S_n$, we define a polynomial 
\[
F_{\pi}(u) := \sum_{B\in \mathcal{B}(\pi)}u^{|B|}p^{|\Asc(\pi)\cap B|}q^{|\Des(\pi)\cap B|}\;.
\]
It follows from~\eqref{eqn:LawAsym} that 
\begin{equation}
\label{eqn:LawPoly}
\nu_{n, m}^{(p)}(\pi) = \frac{1}{m^n}\sum_{j=0}^{n-1}\binom{m}{n-j}[u^j]F_{\pi}(u)=\frac{1}{m^n}[u^n](1+u)^mF_{\pi}(u)\;,
\end{equation}
where we use the notation $[u^j]P(u)$ to denote the coefficient of $u^j$ in the polynomial $P$. Our next proposition shows that the polynomial $F_{\pi}(u)$ depends only on $\des(\pi), \asc(\pi),$ and $\vl(\pi)$. Since $\des(\pi)+\asc(\pi)=n-1$, it follows that $F_{\pi}(u)$ is completely determined by the pair $(\des(\pi), \vl(\pi))$ and consequently $(\des(\pi), \vl(\pi))$ \emph{is a sufficient statistic} for $\nu_{n, m}^{(p)}$.
\begin{proposition}
\label{prop:F_pi}
Let $\pi\in S_n$ such that $d=\des(\pi), a=\asc(\pi)$ and $v=\vl(\pi)$. Let $F_{\pi}$ be as defined above.  Then, 
\[F_\pi(u) = (1+pu)^{a-v}(1+qu)^{d-v}(1+u)^v\;.\]
%In particular, $(\des(\pi), \vl(\pi))$ is a sufficient statistics for the asymmetric shelf shuffle. 
\end{proposition}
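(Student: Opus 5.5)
The generating polynomial $F_\pi$ is a weighted count of subsets $B\subseteq[n-1]$ subject to a local constraint, so the plan is to factor the sum over pieces of $[n-1]$ on which the constraint acts independently. Each position $i\in[n-1]$ carries weight $pu$ if $i\in\Asc(\pi)$ and $qu$ if $i\in\Des(\pi)$, and the only constraint is that for every valley $j\in\Vl(\pi)$ the pair $\{j-1,j\}$ is not contained in $B$. For a valley $j$ we have $\pi_{j-1}>\pi_j<\pi_{j+1}$, so $j-1\in\Des(\pi)$ and $j\in\Asc(\pi)$. Thus each valley determines a \emph{forbidden pair} consisting of a descent followed immediately by an ascent.

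The first step is to check that distinct valleys give disjoint forbidden pairs. If $j<j'$ are valleys, then $j'\ge j+2$, since two valleys cannot be adjacent: $j\in\Asc(\pi)$ while $j'-1\in\Des(\pi)$, which forces $j'-1\ne j$. Hence the pairs $\{j-1,j\}$ and $\{j'-1,j'\}$ are disjoint. We then partition $[n-1]$ into the $v$ forbidden pairs and the remaining singletons. The constraint only restricts $B$ inside each pair, and the weight is multiplicative over positions, so $F_\pi(u)$ is a product of local factors.

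The local factors are as follows. A free singleton ascent contributes $(1+pu)$, and a free singleton descent contributes $(1+qu)$. A forbidden pair allows $B\cap\{j-1,j\}\in\{\emptyset,\{j-1\},\{j\}\}$, which gives
\[
1+qu+pu=1+u,
\]
using $p+q=1$. The pairs use $v$ descents and $v$ ascents, so the free singletons are $a-v$ ascents and $d-v$ descents. This yields
\[
F_\pi(u)=(1+pu)^{a-v}(1+qu)^{d-v}(1+u)^v.
\]

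The only real point to verify is the disjointness of the forbidden pairs. Without it the constraint would couple neighbouring pairs and the product would not factor. The first step handles this, so no serious obstacle is expected.
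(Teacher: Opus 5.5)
Your proposal is correct and follows essentially the same route as the paper: factor $F_\pi$ over the disjoint forbidden pairs $\{j-1,j\}$, $j\in\Vl(\pi)$, each contributing $1+qu+pu=1+u$, and over the remaining $a-v$ ascents and $d-v$ descents, which contribute $1+pu$ and $1+qu$ respectively. You also prove the disjointness of the pairs via $j'\ge j+2$, which the paper only asserts.
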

As a corollary of Proposition~\ref{prop:F_pi} and~\eqref{eqn:LawPoly}, we obtain an alternate formula for $\nu_{n, m}^{(p)}(\pi)$.
\begin{corollary}
\label{cor:LawAsym_closedform}
Let $\pi\in S_n$ such that $d=\des(\pi), a=\asc(\pi)$, and $v=\vl(\pi)$. Then, 
\begin{align*}
\nu_{n, m}^{(p)}(\pi) &= \frac{1}{m^n}[u^{n}](1+u)^{m+v}(1+pu)^{a-v}(1+qu)^{d-v}\\
&= \frac{1}{m^n}\sum_{r=0}^{a-v}\sum_{s=0}^{d-v}\binom{a-v}{r}\binom{d-v}{s}p^rq^s\binom{m+v}{n-r-s}\;.
\end{align*}
%\[
%\nu_{n, m}^{(p)}(\pi) = \frac{1}{m^n}\sum_{j=0}^{n-1}\binom{m}{n-j}\sum_{r, s}p^{a-v-r}q^{d-v-s}\binom{a-v}{r}\binom{d-v}{s}\binom{v}{j-r-s}\;.
%\]
In particular, $(\des(\pi), \vl(\pi))$ is a sufficient statistic for the asymmetric shelf shuffle. 
\end{corollary}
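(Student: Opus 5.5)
The corollary follows by substituting Proposition~\ref{prop:F_pi} into \eqref{eqn:LawPoly} and expanding; no further combinatorics should be needed.

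First, by \eqref{eqn:LawPoly} we have $\nu_{n,m}^{(p)}(\pi)=m^{-n}[u^n](1+u)^mF_\pi(u)$. Proposition~\ref{prop:F_pi} gives $F_\pi(u)=(1+pu)^{a-v}(1+qu)^{d-v}(1+u)^v$. Multiplying, the two powers of $(1+u)$ combine to $(1+u)^{m+v}$, and this is the first displayed formula.

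For the second formula, I will expand the three factors by the binomial theorem:
\[
(1+pu)^{a-v}=\sum_{r}\binom{a-v}{r}p^ru^r,\qquad (1+qu)^{d-v}=\sum_s\binom{d-v}{s}q^su^s,\qquad (1+u)^{m+v}=\sum_k\binom{m+v}{k}u^k.
\]
Extracting the coefficient of $u^n$ forces $k=n-r-s$, which gives the double sum. The terms with $n-r-s<0$ vanish under the usual convention $\binom{N}{k}=0$ for $k<0$. In fact this never happens, since $r+s\le a+d-2v\le n-1$. One should only check that $a-v\ge0$ and $d-v\ge0$, so that the exponents are genuine nonnegative integers. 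This holds because each valley $j$ contributes a descent at $j-1$ and an ascent at $j$, and distinct valleys give distinct such positions.

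For sufficiency, the formula depends on $\pi$ only through $(a,d,v)$, and $a=n-1-d$. Hence $\nu_{n,m}^{(p)}(\pi)$ is a function of $(\des(\pi),\vl(\pi))$ alone. It follows that, conditionally on this pair, the law is uniform, which is exactly the sufficiency claim.

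All the real content sits in Proposition~\ref{prop:F_pi}, which we may assume. The only point that could be called an obstacle is the bookkeeping of the range of summation, and the remark $a,d\ge v$ handles it.
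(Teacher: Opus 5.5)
Your proposal is correct and follows the paper's own route exactly: substitute Proposition~\ref{prop:F_pi} into \eqref{eqn:LawPoly}, combine the powers of $(1+u)$, extract the coefficient of $u^n$ via the binomial theorem, and get sufficiency from $a=n-1-d$. Your side checks ($a,d\ge v$ and $r+s\le n-1-2v$) are harmless extras that the paper leaves implicit.
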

In Section~\ref{subsec:Symmetric}, we specialize Corollary~\ref{cor:LawAsym_closedform} to $p=1/2$ to obtain the law of symmetric shelf-shuffle in Diaconis-Fulman-Holmes~\cite[Theorem 3.1]{DiaconisFulmanHolmes2013}. 

Our next result establishes the cutoff for the asymmetric shelf shuffle.
\begin{theorem}
\label{thm:cutoff}
Let $\nu_n$ be the uniform measure on $S_n$.
Fix $p\in (0, 1)\setminus\{1/2\}$. Let $c>0$ be fixed and $m_n=\lfloor cn^{3/2}\rfloor$. Then, 
\[
\TV(\nu_{n, m_n}^{(p)}, \nu_n) = 1- 2\Phi\round{-\frac{|2p-1|}{4\sqrt{3}c}}+O_{c, p}(n^{-1/2})\;.
\]
\end{theorem}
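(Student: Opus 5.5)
The plan is to show that, under the uniform measure $\nu_n$, the likelihood ratio $L(\pi):=n!\,\nu^{(p)}_{n,m_n}(\pi)$ is, up to a multiplicative $1+O(n^{-1/2})$ error on a set of $\nu_n$-probability $1-O(n^{-1/2})$, equal to $\exp\bigl(\sigma Z_n(\pi)-\sigma^2/2\bigr)$. Here $Z_n$ is the standardized number of ascents and
\[
\sigma=\frac{|2p-1|}{2\sqrt3\,c}.
\]
Granting this, $\TV=\tfrac12\E_{\nu_n}|L-1|$ reduces to a Gaussian computation. For $Z$ standard normal, $\tfrac12\E|e^{\sigma Z-\sigma^2/2}-1|=2\Phi(\sigma/2)-1=1-2\Phi(-\sigma/2)$, and $\sigma/2=|2p-1|/(4\sqrt3 c)$ is the claimed profile.

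\textbf{Step 1: expansion of the likelihood ratio.} Start from Corollary~\ref{cor:LawAsym_closedform}, written as $\nu(\pi)=m^{-n}\sum_j\binom{m}{n-j}[u^j]G_\pi(u)$, where $G_\pi(u)=(1+pu)^{a-v}(1+qu)^{d-v}(1+u)^v=F_\pi(u)$. Use the exact identity
\[
\frac{\binom{m}{n-j}}{\binom{m}{n}}=\frac{(n)_j}{(m-n+j)_j},
\]
where $(x)_j$ denotes the falling factorial. Together with $n!\binom{m}{n}m^{-n}=\prod_{i<n}(1-i/m)=\exp(-n^2/2m+O(n^{-1/2}))$, this gives $L(\pi)\approx e^{-n^2/2m}\,G_\pi(x)$ with $x=n/m\asymp n^{-1/2}/c$. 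The approximation must be made precise. I would interpret the sum over $j$ as an expectation of $(n)_j/(m-n+j)_j$ against the coefficients of $G_\pi$, and compare it with $x^j$. Since the coefficients of $G_\pi(u)$ concentrate on $j\approx nx\approx\sqrt n/c$, the relative corrections are of order $j^2/n$ and $j/m$, i.e. $O(1)$ deterministic terms plus $O(n^{-1/2})$ fluctuations. These deterministic terms must be tracked; they are ultimately fixed by the normalization $\E_{\nu_n}L=1$.

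\textbf{Step 2: Taylor expansion of $\log G_\pi(x)$.} The first-order term is $x\bigl(p(a-v)+q(d-v)+v\bigr)=x(pa+qd)$, so the valleys cancel at first order. Since $a+d=n-1$, this equals $xq(n-1)+x(p-q)a$. Under $\nu_n$, $a$ has mean $(n-1)/2$ and variance $(n+1)/12$. Hence $x(p-q)(a-\tfrac{n-1}{2})=\sigma Z_n+O(n^{-1/2})$ in the sign-adjusted sense. The second-order terms are $-\tfrac{x^2}{2}\bigl(p^2(a-v)+q^2(d-v)+v\bigr)$. They are deterministic constants of order $1/c^2$ plus fluctuations of order $x^2\sqrt n=O(n^{-1/2})$, because $a$ and $v$ both concentrate within $O(\sqrt n)$ of their means. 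Third-order terms are $O(nx^3)=O(n^{-1/2})$. All deterministic pieces combine into a constant $K_n$. Then $\E L=1$ together with the CLT forces $K_n=-\sigma^2/2+O(n^{-1/2})$. Alternatively, I would verify this directly as a consistency check.

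\textbf{Step 3: from the expansion to total variation.} Apply a Berry--Esseen bound with rate $O(n^{-1/2})$ for $\asc$ under the uniform measure; this is classical, e.g.\ via $1$-dependence or real-rootedness of Eulerian polynomials. On the good set $\{|a-\tfrac n2|,|v-\tfrac n3|\le C\sqrt{n\log n}\}$, replace $L$ by $e^{\sigma Z_n-\sigma^2/2}(1+O(n^{-1/2}\log n))$. The target is $O(n^{-1/2})$ without logarithms, so I would instead keep the error as $O(n^{-1/2}(1+|Z_n|+|W_n|))$ with $W_n$ the standardized valley count, and integrate it against the Gaussian-like weights. The complement of the good set contributes at most $\nu_n(\text{bad})+\nu(\text{bad})$. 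Both are tiny: under $\nu_n$ by concentration for $1$-dependent sums, and under $\nu$ because $L$ is bounded by $e^{O(|Z_n|)}$ there.

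\textbf{Main obstacle.} The hardest step will be Step~1: getting the coefficient extraction uniform over all $(a,v)$ with an $O(n^{-1/2})$ relative error, rather than only at typical values. My first attempt would be the saddle-point/Cauchy-integral representation
\[
[u^n](1+u)^{m+v}(1+pu)^{a-v}(1+qu)^{d-v}.
\]
Its saddle sits at $u\approx n/m$, where the $(1+u)^{m}$ factor dominates. A Gaussian local expansion there gives the prefactor, and crude bounds on the contour handle atypical $(a,v)$.
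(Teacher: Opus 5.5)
Your outline is correct, and most of it coincides with the paper's argument. The real difference is the final step.

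\textbf{Where you match the paper.} Your ``expectation against the coefficients of $G_\pi$'' is exactly the paper's device. With $\tau_n=n/(m-n+1)$ in place of $x=n/m$, one has the exact identity $\binom{m}{n-j}=\binom{m}{n}\tau_n^jR_{n,j}$, where $R_{n,j}=\prod_{\ell<j}(1-\ell/n)/(1+\ell/M_n)$. Hence $L_n(\pi)=\tfrac{n!}{m^n}\binom{m}{n}F_\pi(\tau_n)\E_\pi[R_{n,J_\pi}]$, and $F_\pi(\tau_n)=(1+p\tau_n)^{n-1}e^{\theta_n\des(\pi)+\eta_n\vl(\pi)}$ holds exactly. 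So no Taylor expansion of $\log G_\pi$ is needed, and $\eta_n\approx-pq/(c^2n)$ is your second-order valley coefficient.

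Your ``main obstacle'' is settled without a saddle point:
$J_\pi$ is a Bernoulli sum with $\Var_\pi(J_\pi)\le\E_\pi[J_\pi]=O(\sqrt n)$ and exponential tails;
a discrete second-order Taylor bound for $R_{n,j}$ on $j\le B\sqrt n$ gives $\E_\pi[R_{n,J_\pi}]=R_{n,\lfloor\E_\pi J_\pi\rfloor}+O(n^{-1/2})$ uniformly in $\pi$;
$R_{n,j}$ is $O(n^{-1/2})$-Lipschitz in $j$, while $\E_\pi[J_\pi]$ deviates from a constant by $O(1)$ in $L^2(\nu_n)$.
Only $L^2(\nu_n)$ control is needed here, not a relative error uniform in $(a,v)$.

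\textbf{Where you differ.} The paper never linearizes to $e^{\sigma Z_n-\sigma^2/2}$. It keeps the exact descent tilt $\mu_n^{\theta_n}\propto e^{\theta_n\des}$ and proves $\TV(\nu^{(p)}_{n,m_n},\mu_n^{\theta_n})=O(n^{-1/2})$ by Cauchy--Schwarz. It then computes $\TV(\mu_n^{\theta_n},\nu_n)$ from the threshold set of the monotone likelihood ratio, applying Berry--Esseen under both measures (the tilted descent count is again a Bernoulli sum). This avoids identifying the normalizing constant and integrating an unbounded exponential against a CLT. Your route instead shows directly that the log-likelihood ratio is asymptotically $N(-\sigma^2/2,\sigma^2)$ under $\nu_n$. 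That is more transparent, but it needs the repairs below.

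\textbf{Three points to fix.}

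First, $\prod_{i<n}(1-i/m)=\exp\bigl(-\tfrac{n(n-1)}{2m}-\tfrac{1}{6c^2}+O(n^{-1/2})\bigr)$, since $\sum_{i<n}i^2/(2m^2)\to 1/(6c^2)$. Likewise, with $x=n/m$ the factor $m^j/(m-n+j)_j\approx e^{jn/m}$ is of order one for $j\asymp\sqrt n$, not $O(j/m)$. Both slips are harmless only because you fix every constant by normalization.

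Second, a Kolmogorov-distance CLT does not give $\E_{\nu_n}[e^{\sigma Z_n}]=e^{\sigma^2/2}(1+O(n^{-1/2}))$. You need the cumulant expansion $K_n(t)=\bar d_nt+\tfrac12\sigma_n^2t^2+O(n|t|^3)$ for $|t|\le 1$, which comes from real-rootedness of the Eulerian polynomial. So your ``direct verification'' is mandatory, not a consistency check.

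Third, for the Gaussian evaluation use $\TV=\E_{\nu_n}[(1-L)^+]$, which holds because $\E_{\nu_n}L=1$. Write $L=e^{\sigma Z_n-\sigma^2/2+\delta_n}$ with $\delta_n$ bounded and $\|\delta_n\|_{L^2(\nu_n)}=O(n^{-1/2})$. Replacing $(1-L)^+$ by $(1-e^{\sigma Z_n-\sigma^2/2})^+$ then costs at most $\E_{\nu_n}\bigl[e^{\sigma Z_n-\sigma^2/2}|e^{\delta_n}-1|\bigr]=O(n^{-1/2})$, by Cauchy--Schwarz. Berry--Esseen now applies because $z\mapsto(1-e^{\sigma z-\sigma^2/2})^+$ is bounded and monotone. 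This also makes your good/bad-set splitting unnecessary.
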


While total variation distance is arguably the most widely studied metric between probability measures, several other measures of distance are natural and have been studied. Let $\mu$ be a probability measure on $S_n$ and $\nu_n$ be the uniform measure on $S_n$. We define the \emph{relative entropy} of $\mu$ as 
\[
\Ent(\mu):=\Ent(\mu\mid \nu_n) := \sum_{\pi\in S_n}\mu(\pi)\log \frac{\mu}{\nu_n}(\pi)\;. 
\]
While relative entropy is not a metric, it is a good measure of how different $\mu$ is from the uniform measure. Our next result gives the cutoff profile for the asymmetric shelf-shuffle in relative entropy.
\begin{theorem}
\label{thm:entropy-cutoff}
Fix {$p\in(0,1)\setminus\{1/2\}$} and {let $q=1-p$}. Let $c>0$ be fixed and let $m=m_n=\lfloor c n^{3/2}\rfloor$.
% \[
% m=m_n=\lfloor c n^{3/2}\rfloor,\qquad c>0.
% \]
Then
\[
\lim_{n\to \infty}\Ent\!\left(\nu^{(p)}_{n,m}\right)
=
\frac{(2p-1)^2}{24c^2}\;.
\]
\end{theorem}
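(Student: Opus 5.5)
Write $L_n(\pi):=\log\big(n!\,\nu^{(p)}_{n,m}(\pi)\big)$, so that $\Ent(\nu^{(p)}_{n,m})=\E_{\nu^{(p)}_{n,m}}[L_n]$. The plan is to get an explicit asymptotic expansion of $L_n$ in terms of the statistics $(\des(\pi),\vl(\pi))$ from Corollary~\ref{cor:LawAsym_closedform}, and then average it. In the spirit of the exponential-tilt description, I expect, uniformly over permutations with $\des(\pi)=\frac{n-1}{2}+O(\sqrt{n\log n})$ and $\vl(\pi)=\frac n3+O(\sqrt{n\log n})$,
\[
L_n(\pi)=\theta_n\Big(\des(\pi)-\tfrac{n-1}{2}\Big)-\tfrac{\theta_n^2}{2}\sigma_n^2+o(1),\qquad \theta_n=\frac{q-p}{m}\cdot(\text{const}),
\]
where $\sigma_n^2=\Var_{\nu_n}(\des)=\frac{n+1}{12}$ and the valley term contributes only $O(n/m^2)\cdot O(\sqrt n)=o(1)$ at $m\asymp n^{3/2}$.

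To derive this expansion, I will write $n!/m^n\,[u^n](1+u)^{m+v}(1+pu)^{a-v}(1+qu)^{d-v}$ by Cauchy's formula and evaluate it with a saddle point at $u\approx n/m\to0$. Equivalently, I can expand
\[
\frac{(1+pu)^{a-v}(1+qu)^{d-v}}{(1+u/2)^{a+d-2v}}=\exp\Big(\tfrac{(p-q)u}{2}(a-d)+O(nu^2)\Big),
\]
using the symmetric ($p=1/2$) structure as the baseline. With $a-d=n-1-2\des$ this gives $\theta_n\sim(q-p)n/m$ per unit of $\des$. Up to the normalization, this is a tilt $\exp(\lambda(\des-\frac{n-1}2))$ with $\lambda=(q-p)n/m\cdot\frac1{?}$. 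I will fix the constant by matching with Theorem~\ref{thm:cutoff}: the TV profile $1-2\Phi(-\delta/2)$ arises from a Gaussian mean shift $\delta=\lambda\sigma_n$. Setting $\delta=\frac{|2p-1|}{2\sqrt3 c}$ gives $\lambda^2\sigma_n^2=\frac{(2p-1)^2}{12c^2}$. The normalization term $-\lambda^2\sigma_n^2/2$ is then forced by $\E_{\nu_n}[e^{L_n}]=1$ together with the CLT for $\des$ under $\nu_n$.

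Granting the expansion, I will compute $\E_{\nu^{(p)}}[L_n]=\E_{\nu_n}[e^{L_n}L_n]$. Under $\nu_n$, the joint CLT for $(\des,\vl)$, with Berry--Esseen rates, gives $Z=(\des-\frac{n-1}2)/\sigma_n\Rightarrow N(0,1)$, and then
\[
\E\big[e^{\delta Z-\delta^2/2}(\delta Z-\tfrac{\delta^2}{2})\big]=\delta^2-\tfrac{\delta^2}{2}=\tfrac{\delta^2}{2}=\frac{(2p-1)^2}{24c^2},
\]
which is the claimed limit.

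The main obstacle is justifying the interchange of the limit with the expectation, i.e.\ uniform integrability of $e^{L_n}L_n$ under $\nu_n$. The expansion is only uniform on the typical set, while the entropy integrand is unbounded. I will handle this in three steps.
\begin{enumerate}
\item Prove a crude global bound $|L_n(\pi)|\le C(n^2/m+n^3/m^2)=O(\sqrt n)$ valid for all $\pi$, directly from the coefficient formula, since the ratio of the polynomials is controlled termwise.
\item Use Hoeffding/Azuma concentration of $\des$ and $\vl$ under $\nu_n$, which are sums of $2$-dependent indicators, to get $\nu_n(\text{atypical})\le e^{-c\log^2 n}$.
\item Use the same tail bound under $\nu^{(p)}$, via $\nu^{(p)}(A)\le e^{\max L_n}\nu_n(A)$ with $\max L_n=O(\sqrt n)$; if that bound is not sharp enough, I will instead enlarge the typical window to $|\des-\frac{n-1}{2}|\le n^{0.6}$ and redo the expansion there.
\end{enumerate}
Together these make the atypical contribution negligible, and the Gaussian computation above finishes the proof.
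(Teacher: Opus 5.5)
Your route differs from the paper's: you expand $\log(n!\,\nu^{(p)}_{n,m})$ on a typical set, truncate, and finish with a Gaussian computation. The strategy is viable, but one step fails as written: the tail transfer in step~3. Your crude bound is sharp in order. For $p>\frac12$ one has $\theta_n,\eta_n<0$, so your $L_n=\log(n!\,\nu^{(p)}_{n,m})$ is maximised, up to $O(1)$, at the identity. There $F_{\iota_n}(\tau_n)=(1+p\tau_n)^{n-1}$. Together with $\log\bigl(\frac{n!}{m^n}\binom mn\bigr)=-\frac{n^2}{2m}+O(1)$ this gives $\max_\pi L_n(\pi)=\frac{|2p-1|}{2c}\sqrt n+O(1)$; the reversal plays this role for $p<\frac12$.

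So $\nu^{(p)}_{n,m}(A)\le e^{\max L_n}\nu_n(A)$ is useful only if $\nu_n(A)=e^{-\omega(\sqrt n)}$. You need even a little more, namely $\max|L_n|\cdot\nu^{(p)}_{n,m}(A)\to0$, to discard the atypical part of the entropy. But $\des$ has Gaussian tails at scale $\sqrt n$, and these are essentially sharp, so no better concentration inequality helps: $\nu_n(|\des-\bar d_n|>t)=e^{-\Theta(t^2/n)}$. For $t\asymp\sqrt{n\log n}$ this is only polynomially small, not $e^{-c\log^2 n}$. For your fallback $t=n^{0.6}$ it is $e^{-\Theta(n^{0.2})}$. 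In both cases $e^{\max L_n}\nu_n(A)\to\infty$. The loss is in the transfer, not in the truth: on $\{\des-\bar d_n\approx n^{0.6}\}$ the likelihood ratio is only $e^{O(n^{0.1})}$, but a bound through $\max L_n$ cannot see this. Your normalisation ``forced by $\E_{\nu_n}[e^{L_n}]=1$'' also needs $\nu^{(p)}_{n,m}(A)\to0$, so it inherits the same problem.

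There are two ways to repair this.

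\emph{Larger window.} Take $|\des-\bar d_n|,|\vl-\bar v_n|\le n^{1-\epsilon}$ with $0<\epsilon<\frac14$; the threshold is $t\gg n^{3/4}$. Then $\nu_n(A)\le e^{-cn^{1-2\epsilon}}$ beats $e^{C\sqrt n}$. You must re-check the expansion on this window, and it survives with error $O(n^{-\epsilon})$:
\begin{enumerate}
\item the centred valley term is $O(n^{-1}\cdot n^{1-\epsilon})$;
\item the quadratic term of your ratio expansion fluctuates by $O(\tau_n^2n^{1-\epsilon})$;
\item the $\pi$-dependence of the coefficient extraction, which is the paper's factor $\E_\pi[R_{n,J_\pi}]/r_n$, is $1+O\bigl(n^{-1/2}(1+n^{-1/2}|\des-\bar d_n|)\bigr)$.
\end{enumerate}
Note that a pointwise expansion of the integrand ratio at fixed $u$ is not by itself an expansion of the $[u^n]$ coefficient; item 3 is exactly what the saddle-point stability must control.

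\emph{Global bound.} Alternatively, prove $\log\bigl(n!\,\nu^{(p)}_{n,m}(\pi)\bigr)=\log L_n^{\theta_n}(\pi)+O(1)$ uniformly in $\pi$. This gives $\nu^{(p)}_{n,m}(A)\le e^{C}\mu_n^{\theta_n}(A)$, and you can then apply Hoeffding to the tilted Bernoulli sum directly.

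On either window $\theta_n(\des-\bar d_n)$ is unbounded. So the final Gaussian step needs uniform integrability of $e^{\lambda_nZ}(1+|Z|)$ under $\nu_n$, not just the CLT; the same uniform sub-Gaussian tail of $Z$ supplies it.

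The constant you left as ``$\frac{1}{?}$'' is $1$. Your own first-order term gives $\theta_n=(q-p)\frac nm(1+o(1))$; your first display is off by a factor $n$. Hence $\theta_n^2\sigma_n^2\to\frac{(2p-1)^2}{12c^2}$ directly, with no need to reverse-engineer it from Theorem~\ref{thm:cutoff}.

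For comparison, the paper avoids saddle points and truncation entirely. The identity $\binom{m}{n-j}=\binom mn\tau_n^jR_{n,j}$ gives an exact factorisation $n!\,\nu^{(p)}_{n,m}=\beta_nL_n^{\theta_n}\mathcal H_n$. Here $\mathcal H_n$ is bounded above and away from $0$ uniformly in $\pi$, and $\|\beta_n\mathcal H_n-1\|_{L^2(\nu_n)}=O(n^{-1/2})$. The two correction terms in the entropy are then controlled by Cauchy--Schwarz and the boundedness of $\mathcal H_n$. The main term is exactly $\Ent(\mu_n^{\theta_n})=\theta_nK_n'(\theta_n)-K_n(\theta_n)=\frac12\sigma_n^2\theta_n^2+o(1)$.
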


Let us also define the \emph{separation} as 
\[
\sep(\mu) = \max_{\pi\in S_n}\left(1-\frac{\mu(\pi)}{\nu_n(\pi)}\right)\;.
\]
The following theorem gives the limit profile in the separation distance. 
\begin{theorem}
\label{thm:sep-cutoff}
Fix $p\in(0,1)\setminus\{1/2\}$. Fix $c>0$ and let $m_n=\lfloor c n^2\rfloor$. Then, as $n\to \infty$, we have
\[
\operatorname{sep}\!\left(\nu^{(p)}_{n,m_n},\nu_n\right)
\longrightarrow
1-\exp\left\{-\frac{|2p-1|}{2c}\right\}.
\]
\end{theorem}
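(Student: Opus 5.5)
The plan is to use the explicit law from Corollary~\ref{cor:LawAsym_closedform}. For each $\pi$ I will compute the likelihood ratio $n!\,\nu^{(p)}_{n,m}(\pi)$ asymptotically and uniformly in $\pi$, and then minimize it over $\pi$, since $\sep = 1-\min_\pi n!\,\nu^{(p)}_{n,m}(\pi)$.

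Write $G_\pi(u)=(1+pu)^{a-v}(1+qu)^{d-v}(1+u)^v=F_\pi(u)=\sum_j g_j u^j$, using Proposition~\ref{prop:F_pi}. By \eqref{eqn:LawPoly},
\[
n!\,\nu^{(p)}_{n,m}(\pi)=\sum_{j=0}^{n-1} g_j\,\frac{n!}{m^n}\binom{m}{n-j}.
\]
Expanding the binomial coefficient gives
\[
\frac{n!}{m^n}\binom{m}{n-j}=\frac{n!}{(n-j)!\,m^{j}}\prod_{i<n-j}\Bigl(1-\frac im\Bigr).
\]
Take $m=cn^2$. The product equals $\exp\{-n^2/(2m)+O(jn/m+n^3/m^2+1/n)\}$, which is $e^{-1/(2c)}(1+O(j/n)+O(1/n))$. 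The prefactor satisfies $\frac{n!}{(n-j)!m^j}=(n/m)^j\exp\{-O(j^2/n)\}$, and it is at most $(n/m)^j=(cn)^{-j}$. Since $g_j\ge 0$ and $g_j\le\binom{n-1}{j}$, the terms with $j\ge J$ contribute at most $\sum_{j\ge J}\binom{n-1}{j}(cn)^{-j}\le\sum_{j\ge J}(1/c)^j/j!$. This is small uniformly in $\pi$ once $J$ is large.

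Consequently, uniformly in $\pi$,
\[
n!\,\nu^{(p)}_{n,m}(\pi)=e^{-1/(2c)}\,G_\pi\!\Bigl(\tfrac{1}{cn}\Bigr)(1+o(1)).
\]
To justify this I split the sum at a fixed large $J$. On $j<J$ the corrections $O(j/n)$ and $O(j^2/n)$ vanish. For $j\ge J$, both the true tail and the tail of $G_\pi(1/(cn))$ are bounded by $\sum_{j\ge J}(1/c)^j/j!$. Finally, $G_\pi(1/(cn))\ge1$, so relative errors are controlled.

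Next I expand with $x=1/(cn)$:
\[
\log G_\pi(x)=\bigl(p(a-v)+q(d-v)+v\bigr)x+O(nx^2)=\frac{pa+qd}{cn}+O(1/n),
\]
because $p+q=1$ makes the $v$-terms cancel exactly at first order. Using $a+d=n-1$, we get $pa+qd\ge\min(p,q)(n-1)$. Equality holds for the decreasing permutation $\pi=(n,\dots,1)$ when $p>q$ (here $a=v=0$), and for the identity when $p<q$. Therefore
\[
\min_\pi n!\,\nu^{(p)}_{n,m}(\pi)\to\exp\Bigl\{-\frac1{2c}+\frac{\min(p,q)}{c}\Bigr\}=\exp\Bigl\{-\frac{|2p-1|}{2c}\Bigr\},
\]
since $1-2\min(p,q)=|p-q|$. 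This gives the stated limit. Replacing $m$ by $\lfloor cn^2\rfloor$ changes nothing, since this alters $n^2/m$ and $n/m$ by relative $O(n^{-2})$.

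The main obstacle is the uniformity in $\pi$ of the approximation $n!\,\nu(\pi)\approx e^{-1/(2c)}G_\pi(1/(cn))$. The minimum is taken over $n!$ permutations, so pointwise asymptotics are not enough. The key points are that the coefficients $g_j$ are nonnegative and dominated by $\binom{n-1}{j}$ for every $\pi$, which gives a $\pi$-independent tail bound, and that the bound $G_\pi\ge 1$ converts additive errors into relative ones. Both the upper and lower bounds on the minimum then follow from the same two-sided estimate, with the extremal permutation exhibited explicitly.
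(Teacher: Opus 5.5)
Your argument is correct. It reaches the same reduction as the paper: $n!\,\nu^{(p)}_{n,m}(\pi)\approx e^{-1/(2c)}F_\pi(\tau)$ uniformly in $\pi$ with $\tau\approx 1/(cn)$, followed by minimization over $\pi$. You get there by a different and more elementary route.

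The paper reuses the tilt representation $L_{n,m}(\pi)=A_{n,m}F_\pi(\tau)\,\E_\pi[R_{n,J_\pi}]$, with $\tau=n/(m-n+1)$ and $A_{n,m}=\frac{n!}{m^n}\binom{m}{n}$. It asserts, only in sketch form (``arguing as in Lemma~\ref{lem:R-factor-constant}''), that $\E_\pi[R_{n,J_\pi}]=1+O(n^{-1})$ uniformly, using that $J_\pi$ is a Bernoulli sum with $O(1)$ first and second moments. It then identifies the \emph{exact} minimizer of $F_\pi(\tau)$ for large $n$ via the ratio $F_\pi(\tau)/F_{\rho_n}(\tau)$, which needs the extra check $(1+\tau)\ge(1+q\tau)^2$ for small $\tau$.

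You instead truncate the coefficient sum at a fixed $J$. You control the tail by the $\pi$-independent domination $g_j\le\binom{n-1}{j}$, i.e.\ $F_\pi\preceq(1+u)^{n-1}$ coefficientwise, together with $F_\pi(1/(cn))\ge 1$. This makes the uniform approximation self-contained, without the $J_\pi$/$R_{n,j}$ machinery. You also minimize only at first order, using the exact cancellation of the valley terms in $\log F_\pi(x)=(pa+qd)x+O(nx^2)$. The minimization then becomes the linear problem of minimizing $pa+qd$ subject to $a+d=n-1$, and no exact minimizer is needed.

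What the paper's route buys is an explicit $O(n^{-1})$ rate and the exact extremal permutation ($\rho_n$ or $\iota_n$). Your argument as written yields only $o(1)$. Letting $J$ grow like $\log n$, or bounding the relative error $O(j^2/n)$ termwise against the dominating series $\sum_j\binom{n-1}{j}(cn)^{-j}$, would make it quantitative too.
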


Recall that the casino implementing the shelf shuffle was interested in the deviation of the single shuffle using the shelf shuffle from the uniform distribution. These cutoff results precisely address this question. In the next section, we discuss the question of repeated shuffle.

\subsection{Asymmetric shelf-shuffle under repetition}
\label{subsec:Repeatation}
 
From a practical viewpoint, it is arguably more natural to ask how many $(m, p)$-shelf shuffles it takes for a deck of $n$ cards to be sufficiently close to the uniform distribution, where $m\geq 1$ and $p\in (0, 1)$ are fixed. When $p=1/2$, Diaconis, Fulman, and Holmes~\cite{DiaconisFulmanHolmes2013} proved a composition rule showing that an $(m_1, 1/2)$-shelf shuffle followed by an $(m_2, 1/2)$-shelf shuffle is equivalent to a $(2m_1m_2, 1/2)$-shelf shuffle. In particular, repeating a symmetric shelf shuffle with a single shelf $k$ times is equivalent to a symmetric shelf shuffle with $2^{k-1}$ shelves. Chen and Ottolini~\cite{chen2025cutoff} exploited this along with their cutoff result~\eqref{eqn:SymCutoff} to conclude that it takes $\frac{5}{4}\log_{2}n$ shuffles for a symmetric shelf shuffle with a single shelf to mix. 

It is natural to ask the same question for the asymmetric shelf shuffle with a single shelf. 
\begin{question}\label{ques:cutoff}
How many times does one need to shuffle a deck of $n$ cards using {a $(1,p)$-shelf shuffle} for it to mix? 
\end{question}
For the asymmetric shelf shuffle, the composition rule fails, as we show below. In other words, the asymmetric shelf shuffle is not a closed family under repetition. In particular, Theorem~\ref{thm:cutoff} does not directly yield an answer to Question~\ref{ques:cutoff}. We leave it to the future to understand the composition rule for $(m_1, p_1)$-shelf shuffle followed by $(m_2, p_2)$-shelf shuffle and thereby answer Question~\ref{ques:cutoff}. 
\begin{proposition}
The asymmetric shelf shuffle is not closed under repeated shuffling. 
\end{proposition}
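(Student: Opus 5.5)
Since the claim is a negative statement, the natural approach is an explicit counterexample on a small deck. We take $n=2$ or $n=3$, compose two single-shelf asymmetric shuffles, and show that the resulting law is not $\nu_{n,m}^{(p')}$ for any $m\in\mathbb N$ and $p'\in(0,1)$. For $n=2$ the law of an $(m,p)$-shelf shuffle depends only on whether $\pi$ is the identity. By Corollary~\ref{cor:LawAsym_closedform}, with $a=1,d=0,v=0$ for the identity,
\[
\nu_{2,m}^{(p)}(12)=\frac{1}{m^2}\Bigl[\binom{m}{2}+p\,m\Bigr]=\frac{m-1+2p}{2m}.
\]
Since $m\ge1$ and $p'\in(0,1)$ are both free, this single number is too flexible to give a contradiction. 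We therefore move to $n=3$, where there are more constraints.

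For $n=3$, Corollary~\ref{cor:LawAsym_closedform} gives $\nu_{3,m}^{(p)}$ as a function of $(\des,\vl)$. The classes are:
\begin{itemize}
\item the identity, with $(0,0)$;
\item $132$ and $231$, with $(1,0)$;
\item $213$ and $312$, with $(1,1)$;
\item $321$, with $(2,0)$.
\end{itemize}
We compute these four values explicitly as polynomials in $p$ divided by $m^3$. We then compose two $(1,p)$-shuffles for a fixed $p\ne 1/2$, say $p=1/3$ to keep the arithmetic clean. The law of a single $(1,p)$ shuffle on $3$ cards is easy to write down directly: each card goes on top or bottom, giving $4$ equally structured outcomes with weights $p^iq^{2-i}$ (the first card's choice is irrelevant). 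The composed law is the convolution over $S_3$.

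The goal is to find an invariant that every $\nu_{3,m}^{(p')}$ satisfies but the composition violates. The simplest candidate comes from the class structure itself. Every $\nu_{3,m}^{(p')}$ is constant on $\{132,231\}$ and on $\{213,312\}$, because $(\des,\vl)$ is a sufficient statistic. So we check whether the composed law gives $132$ and $231$ different probabilities, and similarly for $213$ versus $312$. The symmetric case is closed under composition precisely because these coincidences persist there. For $p\neq1/2$ we expect the top/bottom bias to break them, since the composition of two shuffles need not respect valleys and descents in the same way.

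If these equalities happen to survive, we fall back on the numerical values. We solve for $(m,p')$ from two class probabilities, for instance $\nu(123)$ and $\nu(321)$, and show the third class value disagrees, or that the solution $m$ is not a positive integer. The main obstacle is precisely this: either locating the asymmetry in the composed law, or excluding all integer $m$. We handle the latter by noting that $\nu(123)+\nu(321)$ and the class values are explicit rational functions of $m$, which yields finitely many candidates to rule out.
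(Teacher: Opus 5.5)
Your overall strategy, an explicit counterexample with $n=3$ after correctly ruling out $n=2$, is the same as the paper's. However, the test you expect to be decisive fails, and the step that would actually prove the claim is only described. Convolving two $(1,p)$-shuffles (single-shuffle law: $123,231,132,321$ with weights $p^2,pq,pq,q^2$) gives
\[
\mu(123)=p^4+p^2q^2+q^4,\qquad \mu(132)=\mu(231)=2p^3q+pq^3,
\]
\[
\mu(213)=\mu(312)=pq^2,\qquad \mu(321)=3p^2q^2 .
\]
So the composed law is constant on $\{132,231\}$ and on $\{213,312\}$ for every $p$, and the class-asymmetry check is inconclusive. Your explanation that the symmetric case is closed ``precisely because these coincidences persist'' is a misdiagnosis. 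The coincidences persist for all $p$; the obstruction when $p\neq 1/2$ lies in a relation among the class values, not in their constancy. The proof therefore rests entirely on your fallback, which you yourself leave as the ``main obstacle.'' As written, no contradiction is actually exhibited.

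The fallback does succeed once you compute it. From Corollary~\ref{cor:LawAsym_closedform},
\[
C_{m,s}=\nu^{(s)}_{3,m}(213)=\frac{m^2-1}{6m^2},\qquad B_{m,s}=\nu^{(s)}_{3,m}(132)=\frac{m^2-1}{6m^2}+\frac{s(1-s)}{m^2}.
\]
At $p=1/3$ the composed law takes the values $7/27,4/27,4/27,4/27$ on the classes of $123,132,213,321$.
\begin{itemize}
\item $C_{m,s}=4/27$ forces $m=3$. This is an integer, so the ``non-integer $m$'' escape is not available on this route.
\item Then $B_{3,s}=4/27$ forces $s(1-s)=0$, contradicting $s\in(0,1)$.
\end{itemize}
Your suggested pair $123,321$ also works: it gives $2s-1=m/9$ and $m^2=27/11$.

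The paper avoids case-by-case solving. For all real $m$ and all $s$,
\[
B+C+D=\frac{m+1-2s}{2m}\quad\text{and}\quad (B+C+D)^2=D+\tfrac12 C .
\]
For the twice-repeated $(1,p)$-shuffle, by contrast, $(B'+C'+D')^2-D'-\tfrac12 C'=\tfrac12 pq^2(2p-1)$. This single invariant handles every $p\neq 1/2$ at once and never uses integrality of $m$.
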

\begin{proof}
We will show this via a simple counterexample. Let $n=3$. recall that for any $p\in (0, 1)$, $\nu_{3, m}^{(p)}$ denotes the law of $(m, p)$-shelf shuffle with with $n=3$ cards. For any $s\in (0, 1)$, Using Corollary~\ref{cor:LawAsym_closedform}, we obtain
\begin{align*}
{B_{m, s}} &{:=\nu_{3, m}^{(s)}(132) = \nu_{3, m}^{(s)}(231) =  \frac{m^2-1}{6m^2}+ \frac{s(1-s)}{m^2},}\\
{C_{m, s}} &{:= \nu_{3, m}^{(s)}(213) = \nu_{3, m}^{(s)}(312)= \frac{m^2-1}{6m^2},}\\
{D_{m, s}} &{:= \nu_{3, m}^{(s)}(321) = \frac{(m-1)(m-2)+6(m-1)(1-s)+6(1-s)^2}{6m^2}\;.}
\end{align*}
Note that we write the permutations in one-line notation. On the other hand, let $\mu:=(\nu_{3, 1}^{(p)})^{\ast 2}$ denote the convolution of $\nu_{3, 1}^{(p)}$ with itself, and thus, the law of the permutation induced by repeating twice the $(1, p)$-shelf shuffling. By a direct computation, we obtain 
\begin{align*}
{B_p'} &{:=\mu(132)= \mu(231) = p-3p^2+5p^3-3p^4,}\\
{C_p'}&{:= \mu(213)=\mu(312) = p(1-p)^2,} \\
{D_p'} &{:=\mu(321) = 3p^2(1-p)^2\;.}
\end{align*}
Comparing $C_{m, s}$ with $C_p'$, one can already see that in general, there are no $(m, s)$ such that {$\mu=\nu_{3, m}^{(s)}$} unless $p=1/2$. In fact, it is an easy verification to show that {$B_{m, s}, C_{m,s}, D_{m, s}$} satisfy 
\[
{(B_{m, s}+C_{m, s}+D_{m, s})^2 - D_{m, s}-\frac{1}{2}C_{m, s}=0}\;.
\]
On the other hand, one can check that 
\[{(B_p'+C_p'+D_p')^2 - D_p'-\frac{1}{2}C_p' = \frac{1}{2}p(1-p)^2(2p-1)}\;. \]
For $\mu$ to be realizable by an {$(m, s)$-shelf shuffle}, the left-hand side must vanish, and hence $p=1/2$ if $p\in (0, 1)$. 
\end{proof}

\subsection{Symmetric shelf shuffle}
\label{subsec:Symmetric}
Note that for $p=q=1/2$, we get $F_{\pi}(u) = (1+u/2)^{n-1-2v}(1+u)^v$.  In particular, $F_{\pi}(u)$ only depends on the number of valleys $\vl(\pi)$ in $\pi$. Therefore, $\vl(\pi)$ is a sufficient statistic for the symmetric shelf shuffle. We end this section by showing how one recovers~\eqref{eqn:LawSymShuffle} for the symmetric shelf shuffle. 
%It is immediate that the coefficient of $u^j$ in $F_{\pi}(u)$ is $\sum_{k=0}^{j}\binom{v}{j-k}\binom{n-1-2v}{k}2^{-k}$. Therefore, 
%\[
%\nu_{n, m}^{(1/2)}(\pi) = \frac{1}{m^n}\sum_{j=0}^{n-1}\binom{m}{n-j}\sum_{k=0}^{j}\binom{v}{j-k}\binom{n-1-2v}{k}2^{-k}
%\]
%Using Chu--Vandermond identity and summing over $j$ first we obtain 
%\[
%\nu_{n, m}^{(1/2)}(\pi) = \frac{1}{m^n}\sum_{k=0}^{n-1-2v}\binom{m+v}{n-k}\binom{n-1-2v}{k}2^{-k} = \frac{1}{m^n} [u^n] (1+u)^{m+v}(1+u/2)^{n-1-2v}\;.
%\]
Using $a+d=n-1$ and $p=q=1/2$ in Corollary~\ref{cor:LawAsym_closedform}, we know that
\[
\nu_{n, m}^{(1/2)}(\pi)= \frac{1}{m^n} [u^n] (1+u)^{m+v}(1+u/2)^{n-1-2v}.
\]
{Now set} $N=n-1-2v$, and rewrite $(1+u)^{m+v}(1+u/2)^{n-1-2v}$  as
\begin{align*}
 \frac{1}{2^{N}} (1+u)^{m+v+N} \round{1+ \frac{1}{1+u}}^{N}&=  \frac{1}{2^{N}} \sum_{r=0}^{N}\binom{N}{r}(1+u)^{N+m+v-r}\;.
\end{align*}
Extracting the coefficient of $u^n$ in $(1+u)^{N+m+v-r}$, we obtain
\[
\nu_{n, m}^{(1/2)}(\pi)=\frac{2\cdot 4^{v} }{(2m)^n}\sum_{r=0}^{n-1-2v}\binom{n-1-2v}{r}\binom{n+m-v-r-1}{n}\,
\]
which is equivalent to~\eqref{eqn:LawSymShuffle} up to a simple change of variable {$a=r+v$}. We thus recover the distribution of symmetric shelf shuffle given in~\cite[Theorem 3.1]{DiaconisFulmanHolmes2013}.

%%%%%%%%%%%%%%%%%%%%%%%%%%%%%%%%%%%%%

\section{Proofs}\label{sec:Proofs}
We will first prove some preliminary results, using which we conclude the proof of Theorem~\ref{thm:Law} and Proposition~\ref{prop:F_pi}. Finally, we prove Theorem~\ref{thm:cutoff} assuming Proposition~\ref{prop:descent_tilt_measure} and Proposition~\ref{prop:GaussianProfile}. The proof of these two propositions requires more preparation. We defer these to later sections.

\subsection{Proofs of Theorem~\ref{thm:Law} and Proposition~\ref{prop:F_pi}}

Suppose that a set of cards $c_1<c_2<\cdots<c_k$ is assigned to a shelf. Since the cards are drawn from the deck in decreasing order, the card $c_k$ must be the first card assigned to this shelf. To every other card $\{c_j: 1\leq j\leq k-1\}$, we assign an independent Bernoulli $p$ random variable $X_{c_j}$. If $X_{c_j}=1$, we place $c_j$ at the top of the existing pile; if $X_{c_j}=0$, we place it at the bottom of the pile. Any realization of these Bernoullis yields a permutation of $\{c_1, \ldots, c_k\}$ \emph{which has no valleys}. Conversely, any permutation of $\{c_1, \ldots, c_k\}$ which contains no valleys is attained by a unique realization of {Bernoulli variables} $\{X_{c_j}: 1\leq j\leq k-1\}$. 
\begin{lemma}
\label{lem:AscDes}
Fix a shelf containing the cards $c_1<c_2<\cdots<c_k$. Fix a realization of i.i.d. Bernoulli (top/down) random variables  $\{X_{c_j}: 1\leq j\leq k-1\}$ as above and let $\pi=(c_{\pi_1}, c_{\pi_2}, \ldots, c_{\pi_k})$ be the corresponding permutation. Then, 
\[
|\Asc(\pi)| = \sum_{i=1}^{k-1}X_{c_i}, \qquad |\Des(\pi)| = \sum_{i=1}^{k-1}(1-X_{c_i})\;.
\]
\end{lemma}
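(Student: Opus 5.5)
The plan is to determine the shape of the final pile explicitly, rather than argue step by step about ascents. The convention (consistent with the statement) is that the permutation $\pi$ lists the pile from top to bottom. The key observation is that cards reach the shelf in decreasing order: $c_k$ first, then $c_{k-1}$, and so on down to $c_1$. So at the moment $c_j$ is placed, it is smaller than every card already on the shelf.

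Let $T=\{j\le k-1: X_{c_j}=1\}$ be the cards sent to the top and $D=\{j\le k-1: X_{c_j}=0\}$ the cards sent to the bottom. I claim that, read top to bottom, the pile is
\[
(c_{t_1}, c_{t_2}, \ldots, c_{t_{|T|}}, c_k, c_{b_1}, c_{b_2}, \ldots, c_{b_{|D|}}),
\]
where $t_1<\cdots<t_{|T|}$ enumerates $T$ and $b_1>\cdots>b_{|D|}$ enumerates $D$. I will prove this by induction on the number of placements. Initially the pile is $(c_k)$. Suppose the pile has this form, restricted to the cards placed so far, and the next card is $c_j$.
\begin{itemize}
\item If $c_j$ goes on top, it is smaller than every card present, so it becomes the new smallest element at the front of the increasing block.
\item If $c_j$ goes on the bottom, it is smaller than every card present, so it becomes the new smallest element at the end of the decreasing block.
\end{itemize}
This preserves the form. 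In particular, the pile is unimodal with peak $c_k$, which also confirms that it has no valleys.

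Counting is then immediate. The $|T|$ adjacent pairs within $(c_{t_1},\ldots,c_{t_{|T|}},c_k)$ are all ascents. The $|D|$ adjacent pairs within $(c_k,c_{b_1},\ldots,c_{b_{|D|}})$ are all descents. These blocks cover all $k-1$ adjacent positions of $\pi$. Hence
\[
|\Asc(\pi)|=|T|=\sum_{i=1}^{k-1}X_{c_i}, \qquad |\Des(\pi)|=|D|=\sum_{i=1}^{k-1}(1-X_{c_i}).
\]

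The only real care needed is the orientation convention, top-to-bottom versus bottom-to-top. With the reverse reading, ascents and descents swap. I will fix the top-to-bottom reading to match how the shuffled deck is formed and the stated identity. Beyond that, the inductive description of the pile is the whole argument, and I do not expect any genuine obstacle.
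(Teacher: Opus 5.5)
Your proof is correct, but it takes a different route from the paper's. The paper argues locally and by insertion event. When $c_j$ goes on top, the card directly below it is larger, giving an ascent at $c_j$'s position. When $c_j$ goes to the bottom, the card directly above it is larger, giving a descent at the preceding position. It then closes with $|\Asc(\pi)|+|\Des(\pi)|=k-1$.

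That argument leaves two facts implicit. First, these adjacencies survive later insertions, which happens because later cards are only added at the two ends. Second, distinct cards are charged to distinct positions. Together these give $|\Asc(\pi)|\ge\sum_i X_{c_i}$ and $|\Des(\pi)|\ge\sum_i(1-X_{c_i})$, and the total count then forces equality.

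You instead describe the whole pile explicitly as $(c_{t_1},\dots,c_{t_{|T|}},c_k,c_{b_1},\dots,c_{b_{|D|}})$ and read off the counts, so you need no persistence or injectivity bookkeeping. Your route is a little longer. In return it proves, at no extra cost, two facts the paper asserts just before the lemma without proof. Every realization yields a valley-free arrangement. Conversely, each valley-free arrangement arises from exactly one realization. This holds because a valley-free arrangement of $c_1<\dots<c_k$ is an increasing run up to $c_k$ followed by a decreasing run, and the split of $\{c_1,\dots,c_{k-1}\}$ between the two runs determines the $X_{c_j}$. The paper's charging argument is shorter and suffices for the count alone.
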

\begin{proof}
%Since $|\Asc(\pi)|+|\Des(\pi)|=k-1$, it suffices to prove only the first claim. To this end, we 
Note that if $X_{c_j}=1$, then the card $c_j$ is placed on the top of the existing pile. Since the cards are dealt in decreasing order, it follows that the card $c_{j'}$ following $c_j$ must satisfy $c_{j'}>c_{j}$. Therefore, there is an ascent at $c_j$. 
On the other hand, if $X_{c_j}=0$, then the card $c_j$ is placed at the bottom of the existing pile. Since the cards are dealt in decreasing order, the card preceding $c_j$ must be larger than $c_j$. In other words, there is a descent at the card preceding $c_{j}$ in the final deck. Since $\Asc(\pi)$ and $\Des(\pi)$ are disjoint sets and $|\Asc(\pi)|+|\Des(\pi)|=k-1$, {the claim follows}.

\end{proof}

Let $\pi=(\pi_1, \pi_2, \ldots, \pi_n)\in S_n$ be a permutation. Fix $m\geq 1$. Note that $\pi$ is realized in an $m$-shelf shuffle if and only if there exists $0\leq b\leq m-1$ (where $b$ denotes the number of empty shelves) such that $\pi$ can be partitioned into $t=m-b$ non-empty parts such that if we partition $\pi$ into $t$ non-empty blocks
\[
\pi_1, \ldots, \pi_{i_1}\vert \pi_{i_1+1}\ldots \pi_{i_2}\vert \cdots {\vert\, \pi_{i_{t-1}+1}\ldots \pi_n}\,.
\]
Then each is a valley-free permutation. Furthermore, if we fix the $b$ shelves that are empty, then there is a unique way to place $t$ parts onto $t$ remaining shelves that gives rise to the permutation $\pi$. Given a $\pi\in S_n$, let us call a partition of $\pi$ an \emph{admissible partition} if each part is valley-free. We first claim that for a permutation $\pi$, the admissible partitions are in one-to-one correspondence with $\mathcal{B}(\pi)$.

\begin{lemma}
\label{lem:BlockStructure}
Let $\pi\in S_n$. The set of admissible partitions of $\pi$ is in one-to-one correspondence with $\mathcal{B}(\pi)$.
\end{lemma}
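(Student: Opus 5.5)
We will encode an admissible partition of $\pi$ by the set of positions $i\in[n-1]$ that are \emph{not} cut points. A partition of $\pi=(\pi_1,\dots,\pi_n)$ into consecutive nonempty blocks is determined by its set of cut points $C\subseteq[n-1]$, where $i\in C$ means that a bar is placed between $\pi_i$ and $\pi_{i+1}$. Set $B:=[n-1]\setminus C$, the set of adjacent pairs $(i,i+1)$ that lie in a common block. This is a bijection between all consecutive partitions and all subsets of $[n-1]$, and under it $|B|=n-t$, where $t$ is the number of blocks. This matches the factor $\binom{m}{n-|B|}$ in Theorem~\ref{thm:Law}, which counts the choices of the $t$ occupied shelves. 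It therefore suffices to show that the partition is admissible (every block valley-free) if and only if $B\in\mathcal B(\pi)$.

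Next we characterize valleys inside a block. A block $\pi_r,\dots,\pi_s$ has a valley at position $j$ exactly when $r<j<s$ and $\pi_{j-1}>\pi_j<\pi_{j+1}$. That is, $j\in\Vl(\pi)$ and both positions $j-1$ and $j+1$ lie in the same block as $j$. The condition that $j-1,j,j+1$ are in one block is precisely that neither $j-1$ nor $j$ is a cut point, i.e.\ $\{j-1,j\}\subseteq B$. The argument is local: valleys of a block are valleys of $\pi$ whose two neighbours are in the block, and conversely any such valley of $\pi$ is a valley of the block. Hence a block structure is admissible if and only if no $j\in\Vl(\pi)$ has $\{j-1,j\}\subseteq B$, which is exactly the defining condition of $\mathcal B(\pi)$.

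To finish, we verify that the map $C\mapsto [n-1]\setminus C$ restricts to a bijection between admissible partitions and $\mathcal B(\pi)$. We also note the edge conventions: $\Vl(\pi)\subseteq\{2,\dots,n-1\}$, so $j-1,j\in[n-1]$, and blocks of size one or two are automatically valley-free, which is consistent with the criterion. There is no real obstacle here. The only point needing care is the indexing, namely matching "pair $(i,i+1)$ joined" with index $i$, so that a valley at $j$ corresponds to the pair $\{j-1,j\}$. Combined with the uniqueness of the shelf assignment noted before the lemma and with Lemma~\ref{lem:AscDes} for the $p,q$ weights, this gives Theorem~\ref{thm:Law}.
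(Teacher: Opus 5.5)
Your proposal is correct and follows the paper's approach. Like the paper, you encode a consecutive partition by the set $B$ of non-cut positions and observe that a valley $j\in\Vl(\pi)$ lies inside a single block exactly when $\{j-1,j\}\subseteq B$; phrasing this as the restriction of the bijection between consecutive partitions and subsets of $[n-1]$, rather than checking the two directions separately, is only a cosmetic difference.
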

\begin{proof}
Let $\pi\in S_n$. Suppose we are given an admissible partition of $\pi$ as follows: 
\[
\pi_1, \ldots, \pi_{i_1}\vert \pi_{i_1+1}\ldots \pi_{i_2}\vert \cdots {\vert\, \pi_{i_{t-1}+1}\ldots \pi_n}\,.
\]
For each $i\in [n-1]$, declare $i\in B$ if $\pi_{i}$ and $\pi_{i+1}$ both lie in the same part (that is, both cards appear on the same shelf). Therefore, {$B=[n-1]$ if $t=1$}, and $B=[n-1]\setminus\{i_1, i_2, \ldots, i_{t-1}\}$ if $t\geq 2$. We claim that $B\in \mathcal{B}(\pi)$. Indeed, if $\alpha$ is a valley, then $\pi_{\alpha-1}>\pi_{\alpha}<\pi_{\alpha+1}$. Since every part in an admissible partition is valley-free, the three consecutive entries $\pi_{\alpha-1},\pi_{\alpha},\pi_{\alpha+1}$ cannot all lie in the same part. Equivalently, at least one of the adjacent indices $\alpha-1$ and $\alpha$ is a cut, so $\{\alpha-1,\alpha\}\not\subseteq B$. Hence $B\in\mathcal B(\pi)$.
It is clear that distinct partitions of $\pi$ give rise to distinct $B$, that is, this map is one-to-one. 

Conversely, fix $B\in \mathcal{B}(\pi)$. If $B=[n-1]$, then $\pi$ cannot have any valleys. In this case, set the partition of $\pi$ to be {a single block}, which is clearly admissible. Now suppose $I:=[n-1]\setminus B =\{i_1<i_2<\ldots<i_{t-1}\}$ for some $t\geq 2$. Define a partition of $\pi$ as:
\[
\pi = \pi_1\ldots \pi_{i_1}\vert \pi_{i_1+1}\ldots \pi_{i_2}\vert \cdots\cdots \vert \pi_{i_{t-1}+1}\ldots \pi_n\;.
\]
We claim that the above partition is admissible. To see this, let $\alpha$ be a valley in $\pi$. Since $B\in \mathcal{B}(\pi)$, we must have $\{\alpha-1, \alpha\}\not\subseteq B$. Therefore, we must have that $I\cap \{\alpha, \alpha-1\}\neq\emptyset$. Therefore, a block must {end at} either $\alpha-1$ or $\alpha$. In other words, $\{\alpha-1, \alpha, \alpha+1\}$ cannot be in the same block of the partition. Therefore, every block must be valley-free. 

\end{proof}

\begin{proof}[Proof of Theorem~\ref{thm:Law}]
Fix $\pi\in S_n$. Let $B\in \mathcal{B}(\pi)$ and consider the admissible partition of $\pi$ into $t=n-|B|$ non-empty blocks as $C_1, C_2, \ldots, C_t$ in Lemma~\ref{lem:BlockStructure}. Since each card is assigned to a shelf independently and uniformly, the probability of this happening is $1/m^{n}$. Furthermore, to realize these blocks as non-empty shelves, we must choose $t$ shelves out of $m$ shelves, and assign block $C_{\ell}$ to the $\ell$-th chosen shelf, which can be done in $\binom{m}{n-|B|}$ ways. Once the blocks are assigned to shelves, the decision to place the card on top/bottom uniquely determines a realization of top/down i.i.d. Bernoulli random variables. By Lemma~\ref{lem:AscDes}, the probability of this realization is $p^{|\Asc(\pi)\cap B|}q^{|\Des(\pi)\cap B|}${.}
Therefore, we obtain 
\[
\nu_{n, m}^{(p)}(\pi)=\frac{1}{m^n}\sum_{B\in \mathcal{B}(\pi)}\binom{m}{n-|B|}p^{|{\Asc}(\pi)\cap B|}q^{|\Des(\pi)\cap B|}\;.
\]
\end{proof}

\begin{proof}[Proof of Proposition~\ref{prop:F_pi}]
{Fix $\pi\in S_n$. Let}
\[
{V^{+}(\pi):=\Vl(\pi), \qquad V^{-}(\pi):=\{j-1: j\in \Vl(\pi)\}.}
\]
{Then $V^{+}(\pi)\subseteq \Asc(\pi)$, $V^{-}(\pi)\subseteq \Des(\pi)$, and $|V^{+}(\pi)|=|V^{-}(\pi)|=\vl(\pi)$. Moreover, the pairs $\{j-1,j\}$, $j\in\Vl(\pi)$, are disjoint. Put $A_0:=\Asc(\pi)\setminus V^{+}(\pi)$ and $D_0:=\Des(\pi)\setminus V^{-}(\pi)$.}

{For each $i\in A_0$, the set $B$ either omits $i$ or includes $i$, contributing a factor $1+pu$. Similarly, each $i\in D_0$ contributes a factor $1+qu$. For a valley $j\in\Vl(\pi)$, the admissibility condition allows $B$ to contain neither $j-1$ nor $j$, to contain only $j-1$, or to contain only $j$, but not both. The corresponding contribution is}
\[
{1+qu+pu=1+u.}
\]
{Multiplying these independent contributions gives}
\[
{F_\pi(u)=(1+pu)^{|A_0|}(1+qu)^{|D_0|}(1+u)^{\vl(\pi)}=(1+pu)^{a-v}(1+qu)^{d-v}(1+u)^v,}
\]
{as claimed.}
\end{proof}

%%%%%%%%%%%%%%%%%%%%%%%%%%%%%
%%%%%%%%%%%%%%%%%%%%%%%%%%%%%
\subsection{Proof of Theorem~\ref{thm:cutoff}}
The proof idea is inspired by~\cite {chen2025cutoff}. Let $\nu_n$ denote the uniform measure on $S_n$. We first show that $\nu_{n, m}^{(p)}$ is an exponential tilt of $\nu_n$, where the tilt depends on the number of descents as well as the number of valleys. To state this result more precisely, we need some notations. From now on, we set 
\[m\equiv m_n=\lfloor cn^{3/2}\rfloor, \quad M_n=m-n+1, \quad \tau_n={\frac{n}{M_n}}= \frac{1}{c\sqrt{n}}+ O_c(n^{-1})\;.\]
Recall that 
\[F_{\pi}(u) = (1+pu)^{\asc(\pi)-\vl(\pi)}(1+qu)^{\des(\pi)-\vl(\pi)}(1+u)^{\vl(\pi)}\;.\]
Given $\pi\in S_n$, let $J_{\pi}$ be a random variable with probability generating function
\[
\mathbb{E}_{\pi}[z^{J_{\pi}}] = \frac{F_{\pi}(\tau_nz)}{F_{\pi}(\tau_n)}\;.
\]
Here and throughout, we use the subscript $\mathbb{E}_{\pi}$ and $\Var_{\pi}$ to denote the expectation and variance with respect to the law of $J_{\pi}$. 
Let us also define 
\begin{align}
%\label{eqn:thetaAndeta}
     \theta_n &:= \log\frac{1+q\tau_n}{1+p\tau_n}=-\frac{2p-1}{c\sqrt{n}}+ O(n^{-1}),\label{eqn:Thetan} \\
     \eta_n &:=  \log\frac{1+\tau_n}{(1+p\tau_n)(1+q\tau_n)}=-\frac{pq}{c^2n}+O(n^{-3/2})\;.\label{eqn:Etan}
\end{align}
The asymptotics of $\theta_n$ and $\eta_n$ in~\eqref{eqn:Thetan} and~\eqref{eqn:Etan} follow from a simple Taylor expansion. We skip the details.

\begin{lemma}[$\nu_{n, m_n}^{(p)}$ is an exponential tilt of $\nu_n$]
Let $\nu_{n, m}^{(p)}$ be as above. Then, 
\[
L_n(\pi)= \frac{\nu_{n, m_n}^{(p)}{(\pi)}}{\nu_n{(\pi)}} = C_n \exp\left(\theta_n \des(\pi)+ \eta_n\vl(\pi)\right)\mathbb{E}{_{\pi}}[R_{n, J_{\pi}}],
\]
where  
\[
C_n =n!\binom{m}{n} \frac{(1+p\tau_n)^{n-1}}{m^n}, \qquad R_{n, j}:=\prod_{\ell=0}^{j-1}\frac{1-\ell/n}{1+\ell/M_n}\quad \forall j\geq 1\;.
\]
\end{lemma}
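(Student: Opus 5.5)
We start from the coefficient-extraction formula \eqref{eqn:LawPoly}, $\nu_{n,m}^{(p)}(\pi)=m^{-n}\sum_{j}\binom{m}{n-j}[u^j]F_\pi(u)$, and divide by $\nu_n(\pi)=1/n!$. The idea is to write each $\binom{m}{n-j}$ relative to $\binom{m}{n}$ and to absorb the ratio into $\tau_n^j$ times a correction factor. Concretely,
\[
\frac{\binom{m}{n-j}}{\binom{m}{n}}=\prod_{\ell=0}^{j-1}\frac{n-\ell}{m-n+\ell+1}=\prod_{\ell=0}^{j-1}\frac{n(1-\ell/n)}{M_n(1+\ell/M_n)}=\tau_n^{j}R_{n,j},
\]
using $M_n=m-n+1$ and $\tau_n=n/M_n$. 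This is a straightforward telescoping check, with $R_{n,0}=1$. It gives
\[
L_n(\pi)=\frac{n!\binom{m}{n}}{m^n}\sum_{j}[u^j]F_\pi(u)\,\tau_n^jR_{n,j}.
\]

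Next we recognize the sum as an expectation. By the definition of $J_\pi$, we have $\Pbb(J_\pi=j)=[u^j]F_\pi(u)\,\tau_n^j/F_\pi(\tau_n)$. Since $F_\pi$ has nonnegative coefficients, this is a genuine probability distribution. Hence the sum equals $F_\pi(\tau_n)\,\E_\pi[R_{n,J_\pi}]$, and
\[
L_n(\pi)=\frac{n!\binom{m}{n}}{m^n}F_\pi(\tau_n)\,\E_\pi[R_{n,J_\pi}].
\]

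The remaining step is to rewrite $F_\pi(\tau_n)$ using Proposition~\ref{prop:F_pi}, with $a=n-1-d$:
\[
F_\pi(\tau_n)=(1+p\tau_n)^{n-1-d-v}(1+q\tau_n)^{d-v}(1+\tau_n)^v=(1+p\tau_n)^{n-1}\left(\frac{1+q\tau_n}{1+p\tau_n}\right)^{d}\left(\frac{1+\tau_n}{(1+p\tau_n)(1+q\tau_n)}\right)^{v}.
\]
The exponents of $(1+p\tau_n)$ combine to $(n-1)-d-v+d-v+v-v$, which after grouping gives exactly the stated form. The last two factors are $e^{\theta_n d}$ and $e^{\eta_n v}$, so $F_\pi(\tau_n)=(1+p\tau_n)^{n-1}\exp(\theta_n\des(\pi)+\eta_n\vl(\pi))$. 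Combining this with the previous display yields the claim with the stated $C_n$.

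There is no real obstacle here. The only points needing care are the index bookkeeping in the binomial ratio (the shift $m-n+\ell+1$, which matches $M_n+\ell$) and the correct regrouping of exponents in $F_\pi(\tau_n)$.
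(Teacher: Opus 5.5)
Your proposal is correct and follows the paper's proof step for step. You use the identity $\binom{m}{n-j}=\binom{m}{n}\tau_n^{j}R_{n,j}$ (with $R_{n,0}=1$), recognize the resulting sum as $F_\pi(\tau_n)\,\E_\pi[R_{n,J_\pi}]$ through the law of $J_\pi$, and then factor $F_\pi(\tau_n)=(1+p\tau_n)^{n-1}\exp(\theta_n\des(\pi)+\eta_n\vl(\pi))$ using Proposition~\ref{prop:F_pi}. One small slip: your displayed factorization is right, but the sentence tallying the exponents of $(1+p\tau_n)$ does not add up (it sums to $n-1-2v$). The clean check is that the right-hand side expands to $(1+p\tau_n)^{n-1-d-v}(1+q\tau_n)^{d-v}(1+\tau_n)^{v}$.
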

\begin{proof}
Recall that 
\[
L_n(\pi) = n!\nu_{n, m}^{(p)}(\pi) = \frac{n!}{m^n}\sum_{j=0}^{n-1}\binom{m}{n-j}[u^j]F_{\pi}(u)\;.
\]
Using the fact that $\binom{m}{n-j} = \binom{m}{n}\tau_n^j R_{n, j},$  summing over $j$, and using the definition of $J_{\pi}$, we get 
\[
L_n(\pi) = \frac{n!}{m^n}\binom{m}{n}F_{\pi}(\tau_n)\mathbb{E}{_{\pi}}[R_{n, J_{\pi}}]\;.
\]
The proof now finishes by observing that $F_{\pi}(\tau_n) = (1+p\tau_n)^{n-1}\exp(\theta_n\des(\pi)+\eta_n\vl(\pi))$.

\end{proof} 

%More precisely, 
%\[
%L_n(\pi):= \frac{\nu_{n, m_n}^{(p)}}{\nu_n} = n!\nu_{n, m}^{(p)} = C_n' \exp\left(\theta_n \des(\pi)+ \eta_n\vl(\pi)\right)\;,
%\]
%Note that $\theta_n= \frac{2p-1}{c\sqrt{n}}+ O(n^{-1})$ and $\eta_n=-\frac{pq}{c^2n}+O(n^{-3/2})$. 
To compare it with~\cite{chen2025cutoff}, note that when $p=1/2$, the dominating contribution comes from the tilt due to valleys. This is precisely what was exploited in~\cite{chen2025cutoff}. However, our next {proposition} shows that when $p\neq 1/2$, the dominating term in the exponential tilt is due to the number of descents. The contribution from the number of valleys is, in fact, negligible. This is our next {proposition, whose proof} we defer to Section~\ref{sec:FirstMainLemma}.
\begin{proposition}
\label{prop:descent_tilt_measure}
Let $\mu_n^{\theta_n}$ be a measure on $S_{n}$ {with} the Radon-Nikodym derivative
\[
L_n^{\theta_n}:=\frac{\mu_n^{\theta_n}(\pi)}{\nu_n(\pi)} =  \frac{\exp(\theta_n \des(\pi))}{\mathbb{E}_{\nu_n}[\exp(\theta_n \des(\pi))]}.
\]
Then, 
\[\TV({\nu_{n, m_n}^{(p)}}, \mu_n^{\theta_n})= O{_{c,p}}(n^{-1/2})\;.\]
\end{proposition}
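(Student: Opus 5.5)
We use the exponential-tilt lemma just proved,
\[
L_n(\pi)=C_n\exp\bigl(\theta_n\des(\pi)+\eta_n\vl(\pi)\bigr)\,\mathbb{E}_\pi[R_{n,J_\pi}].
\]
We show that the two extra factors, $e^{\eta_n\vl(\pi)}$ and $\mathbb{E}_\pi[R_{n,J_\pi}]$, are essentially constant: each equals a deterministic quantity times $1+O(n^{-1/2})$, uniformly on a set of permutations of $\nu_n$-probability (and $\mu_n^{\theta_n}$-probability) $1-O(n^{-1/2})$. Then $L_n=\kappa_n L_n^{\theta_n}(1+O(n^{-1/2}))$ on a typical set, where $\kappa_n$ is a constant. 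Since both $L_n$ and $L_n^{\theta_n}$ integrate to one against $\nu_n$, this forces $\kappa_n=1+O(n^{-1/2})$. We then conclude with
\[
\TV(\nu_{n,m_n}^{(p)},\mu_n^{\theta_n})=\tfrac12\,\mathbb{E}_{\nu_n}\bigl|L_n-L_n^{\theta_n}\bigr|,
\]
splitting the expectation into the typical set and its complement.

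\textbf{Step 1 (valley factor).} Under $\nu_n$, $\vl(\pi)$ has mean $(n-2)/3$ and variance $O(n)$ (standard, e.g.\ via $m$-dependence). Since $\eta_n=-pq/(c^2n)+O(n^{-3/2})$,
\[
\eta_n\vl(\pi)=\eta_n\frac{n-2}{3}+O(n^{-1}|\vl-\E\vl|).
\]
The error is $O(n^{-1/2})$ in $L^1$, and $O(n^{-1/2})$ on $\{|\vl-\E\vl|\le n^{1/2}\log n\}$. Under $\mu_n^{\theta_n}$ the same concentration holds. Because $\theta_n\sim n^{-1/2}$, the tilt by $\des$ has $\E_{\nu_n}[(L_n^{\theta_n})^2]=O(1)$, since $\des$ has variance $\asymp n$ and $\theta_n^2 n=O(1)$. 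Cauchy--Schwarz then transfers the bounds from $\nu_n$ to $\mu_n^{\theta_n}$.

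\textbf{Step 2 (the $R$ factor).} We have
\[
\log R_{n,j}=-\frac{j^2}{2n}(1+o(1))+O\Bigl(\frac{j^2}{M_n}\Bigr),
\]
and $J_\pi$ is a sum of independent Bernoullis with success probabilities $p\tau_n/(1+p\tau_n)$, $q\tau_n/(1+q\tau_n)$ and $\tau_n/(1+\tau_n)$. Hence $\E_\pi J_\pi=\tau_n(p\asc+q\des+\cdots)=O(\sqrt n)$ and $\Var_\pi J_\pi=O(\sqrt n)$. Consequently
\[
\mathbb{E}_\pi[R_{n,J_\pi}]=\exp\Bigl(-\frac{(\E_\pi J_\pi)^2}{2n}\Bigr)\bigl(1+O(n^{-1/2})\bigr).
\]
Now $\E_\pi J_\pi=\tau_n(p(n-1)+(q-p)\des(\pi))+O(1)$. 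Writing $\des=\tfrac{n-1}{2}+O(\sqrt n)$ on the typical set, $(\E_\pi J_\pi)^2/(2n)$ equals a deterministic constant plus $O(\tau_n\cdot\sqrt n\cdot\tau_n n/n)=O(n^{-1/2})$.

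\textbf{Main obstacle.} This step is the crux: the expansion of $R$ must be done uniformly in $\pi$, and the fluctuations of $\des$ around $(n-1)/2$ enter the $R$ factor only at order $n^{-1/2}$. The resulting $\des$-dependence is $\exp(O(\tau_n^2(\des-\tfrac{n-1}{2})))$, whose linear part can be absorbed into a $\theta_n$-shift of size $O(n^{-1})$. That shift changes $\mu_n^{\theta}$ by only $O(n^{-1/2})$ in TV, since $\des$ has standard deviation $\sqrt n$.

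\textbf{Step 3 (assembling the bound).} Off the typical set, bound $\E_{\nu_n}[(L_n+L_n^{\theta_n})\mathbf 1_{\text{atypical}}]$ using a crude bound $L_n\le C\,e^{\theta_n\des}$ (the $R$ factor is at most $1$ and $\eta_n<0$), together with Cauchy--Schwarz and the Chebyshev/Gaussian tail bounds for $\des$ and $\vl$. This gives $O(n^{-1/2})$. Combining with Steps 1 and 2 yields $\TV(\nu_{n,m_n}^{(p)},\mu_n^{\theta_n})=O_{c,p}(n^{-1/2})$.
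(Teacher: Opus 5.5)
Your architecture is the paper's: write $L_n$ as a constant times $L_n^{\theta_n}$ times the valley factor $e^{\eta_n(\vl(\pi)-\bar v_n)}$ and the $R$-factor $\E_\pi[R_{n,J_\pi}]$, show these two are nearly constant, and close with $\E_{\nu_n}[(L_n^{\theta_n})^2]=O(1)$ and normalization. The gap is in \emph{how} you measure ``nearly constant'': the typical-set bookkeeping cannot deliver the rate $O(n^{-1/2})$.

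Both extra factors deviate from their constant values by amounts of exact order $n^{-1}|\vl(\pi)-\bar v_n|$ and $n^{-1}|\des(\pi)-\bar d_n|$. This is because $\eta_n\sim -pq/(c^2n)$, and the linear $\des$-coefficient in $(\E_\pi J_\pi)^2/(2n)$ is of order $\tau_n^2\asymp n^{-1}$ with a nonzero constant when $p\neq 1/2$. On a window $\{|\des-\bar d_n|,|\vl-\bar v_n|\le K\sqrt n\}$ the uniform error is $O(Kn^{-1/2})$. By the CLT, however, the complement has probability bounded below by a positive constant unless $K\to\infty$. To make it small enough you need $K\gtrsim\sqrt{\log n}$, together with Gaussian-type tails for $\vl$, which you have not established; Chebyshev is far too weak here. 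In particular, your own window $\{|\vl-\E\vl|\le n^{1/2}\log n\}$ in Step 1 gives error $n^{-1/2}\log n$, not $n^{-1/2}$.

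The Cauchy--Schwarz transfers also square-root probabilities. You get $\mu_n^{\theta_n}(A)\le\norm{L_n^{\theta_n}}_{L^2(\nu_n)}\nu_n(A)^{1/2}$, so $\nu_n(A)=O(n^{-1/2})$ only yields $\mu_n^{\theta_n}(A)=O(n^{-1/4})$. In Step 3 you would actually need $\nu_n(A)=O(n^{-1})$. Executed carefully, your scheme gives at best $\TV=O(\sqrt{\log n/n})$, not the claimed $O(n^{-1/2})$.

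The repair is to drop the typical set and work in $L^2(\nu_n)$ throughout, which is what the paper does. Pointwise and uniformly in $\pi$ you have $|e^{\eta_n(\vl(\pi)-\bar v_n)}-1|\le Cn^{-1}|\vl(\pi)-\bar v_n|$ and $|\E_\pi[R_{n,J_\pi}]/r_n-1|\le C\bigl(n^{-1/2}+n^{-1}|\des(\pi)-\bar d_n|+n^{-3/2}|\vl(\pi)-\bar v_n|\bigr)$. The second bound follows from three facts:
\begin{itemize}
\item $\E_\pi[R_{n,J_\pi}]=R_{n,\lfloor\E_\pi J_\pi\rfloor}+O(n^{-1/2})$;
\item the Lipschitz bound $|R_{n,j}-R_{n,k}|\le Cn^{-1/2}|j-k|$ for $j,k=O(\sqrt n)$;
\item the exact identity $\E_\pi J_\pi-\bar m_n=(a_q-a_p)(\des(\pi)-\bar d_n)+(a_1-a_p-a_q)(\vl(\pi)-\bar v_n)$, with $a_q-a_p=O(n^{-1/2})$ and $a_1-a_p-a_q=O(n^{-1})$.
\end{itemize}
The variance bounds alone then give $\norm{\mathcal H_n-1}_{L^2(\nu_n)}=O(n^{-1/2})$ for $\mathcal H_n=e^{\eta_n(\vl-\bar v_n)}\E_\pi[R_{n,J_\pi}]/r_n$. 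Normalization gives $\beta_n=1/\E_{\mu_n^{\theta_n}}[\mathcal H_n]=1+O(n^{-1/2})$, and then $\TV\le\frac12\norm{L_n^{\theta_n}}_{L^2(\nu_n)}\norm{\beta_n\mathcal H_n-1}_{L^2(\nu_n)}=O(n^{-1/2})$.

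No tail bounds, typical sets or $\theta$-shift are needed. Your remark in Step 1 that the error is $O(n^{-1/2})$ in $L^1$ is the right observation. You just have to carry it, in $L^2$, into Step 3 instead of switching to uniform bounds on a typical set.
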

Because of Proposition~\ref{prop:descent_tilt_measure}, it suffices to obtain the cutoff profile for $\mu_n^{\theta_n}$. In the setup of Chen and Ottolini~\cite{chen2025cutoff}, the measure $\mu_{n, m}^{(1/2)}$ is already an exponential tilt of the uniform measure, although the tilt depends on the number of valleys instead of the number of descents. This is the main difference, because of which the technical details in our case are different and need to be worked out independently. Our next {proposition} shows that $\mu_n^{\theta_n}$ has the correct limit profile.

\begin{proposition}
\label{prop:GaussianProfile}
 \[\TV(\mu_n^{\theta_n}, \nu_n) = 1-2\Phi\left(-\frac{|2p-1|}{4\sqrt{3}c}\right)+O_{c, p}(n^{-1/2})\;.\]
\end{proposition}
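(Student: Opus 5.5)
Since $\mu_n^{\theta_n}$ is an exponential tilt of $\nu_n$ by the single statistic $\des(\pi)$, its density is a monotone function of $\des$. Hence the total variation distance is attained on a one-sided set:
\[
\TV(\mu_n^{\theta_n},\nu_n)=\mu_n^{\theta_n}(A)-\nu_n(A),\qquad A=\{\pi:\ \theta_n\des(\pi)>\log Z_n\},
\]
where $Z_n=\mathbb{E}_{\nu_n}[e^{\theta_n\des}]$. Under $\nu_n$, $\des$ has the Eulerian distribution, with mean $(n-1)/2$ and variance $(n+1)/12$. It satisfies a CLT with Berry--Esseen rate $O(n^{-1/2})$; for instance, it is a sum of independent Bernoullis by real-rootedness of Eulerian polynomials, or one can use $\des\overset{d}{=}\lfloor U_1+\cdots+U_n\rfloor$ with i.i.d.\ uniforms. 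Write $\sigma_n^2=(n+1)/12$ and $\theta_n=-\kappa/\sqrt n+O(n^{-1})$ with $\kappa=(2p-1)/c$. By symmetry of the Eulerian law under $d\mapsto n-1-d$, it suffices to treat $\kappa>0$ (the tilt toward fewer descents); the case $p<1/2$ is the mirror image.

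\textbf{Step 1 (normalizer and threshold).} Using the independent-Bernoulli representation $\des=\sum_i \xi_i$ with $\xi_i\sim\mathrm{Ber}(r_i)$, I will expand the log-moment generating function as
\[
\log Z_n=\theta_n\tfrac{n-1}{2}+\tfrac12\theta_n^2\sigma_n^2+O(n\theta_n^3).
\]
The error term is $O(n^{-1/2})$. The set $A$ is then $\{\des<(n-1)/2+\tfrac12\theta_n\sigma_n^2+O(n^{1/2}\cdot n^{-1/2}\ldots)\}$; more precisely, in standardized units $W=(\des-\frac{n-1}{2})/\sigma_n$, it is $\{W<-\delta_n\}$ with
\[
\delta_n=\tfrac12|\theta_n|\sigma_n+O(n^{-1/2})=\frac{\kappa}{4\sqrt3}+O(n^{-1/2}).
\]

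\textbf{Step 2 (law of $W$ under the tilt).} Under $\mu_n^{\theta_n}$ the variable $\des$ is again a sum of independent Bernoullis, with parameters $r_ie^{\theta_n}/(1-r_i+r_ie^{\theta_n})$. Its mean is shifted by $\theta_n\sigma_n^2+O(1)$, so $W$ is shifted by $-2\delta_n+O(n^{-1/2})$, and its variance is $\sigma_n^2(1+O(n^{-1/2}))$. Applying Berry--Esseen to both measures gives
\[
\TV=\Phi(\delta_n)-\Phi(-\delta_n)+O(n^{-1/2})=1-2\Phi\!\left(-\frac{\kappa}{4\sqrt3}\right)+O(n^{-1/2}).
\]
Here I absorb the $O(n^{-1/2})$ perturbation of $\delta_n$ using the Lipschitz continuity of $\Phi$, and the lattice effect (the threshold falling between integers) costs only $O(1/\sigma_n)$. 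Substituting $\kappa=|2p-1|/c$ yields the stated constant.

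\textbf{Main obstacle.} The main difficulty is obtaining a uniform $O(n^{-1/2})$ Berry--Esseen bound under the tilted measure. I plan to use the independent-Bernoulli representation from real-rootedness of the Eulerian polynomial, since tilting preserves real-rootedness. Then the classical Berry--Esseen theorem for non-identical summands applies directly: the tilted variance is of order $n$ and the third absolute moments are bounded, which gives rate $O(n^{-1/2})$. As an alternative, one could compute the characteristic function explicitly via the uniform-sum representation. Finally, I will verify the cumulant expansions of $\log Z_n$ and of the tilted mean to the needed order by differentiating $\sum_i\log(1-r_i+r_ie^{\theta})$, using $\sum_i r_i(1-r_i)=\sigma_n^2$ and $\sum_i r_i(1-r_i)|1-2r_i|=O(n)$.
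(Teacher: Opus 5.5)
Your proposal is correct and follows essentially the same route as the paper. Both arguments use a monotone likelihood ratio in $\des$ to get a one-sided optimal set, a Taylor expansion of the cumulant generating function to place the threshold at $\bar d_n+\tfrac12\sigma_n^2\theta_n+O(1)$, and Berry--Esseen under both $\nu_n$ and the tilted measure via the independent-Bernoulli representation from real-rootedness of the Eulerian polynomial. The only cosmetic difference is that you reduce to one sign of $\theta_n$ via the reversal symmetry $d\mapsto n-1-d$, whereas the paper treats $\theta_n>0$ and $\theta_n<0$ separately.
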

We defer the proof of Proposition~\ref {prop:GaussianProfile} to Section~\ref{sec:SecondMainLemma}. We end this section with the proof of Theorem~\ref{thm:cutoff}. 
\begin{proof}[Proof of Theorem~\ref{thm:cutoff}]
The proof follows from the triangle inequality. Note that 
\[
\TV(\nu_{n, m}^{(p)}, \mu_n^{\theta_n})- \TV(\nu_n, \mu_n^{\theta_n})\leq \TV(\nu_{n, m}^{(p)}, \nu_n) \leq \TV(\nu_{n, m}^{(p)}, \mu_n^{\theta_n})+ \TV(\nu_n, \mu_n^{\theta_n})\;.
\]
By Proposition~\ref{prop:descent_tilt_measure}, we have {$\TV(\nu_{n,m}^{(p)}, \mu_n^{\theta_n})=O_{c, p}(n^{-1/2})$}; and by Proposition~\ref{prop:GaussianProfile}, we have $\TV(\mu_n^{\theta_n}, \nu_n) = 1-2\Phi\left(-\frac{|2p-1|}{4\sqrt{3}c}\right)+O_{c, p}(n^{-1/2})$. We conclude that 
\[
\TV(\nu_{n, m}^{(p)}, \nu_n) = 1-2\Phi\left(-\frac{|2p-1|}{4\sqrt{3}c}\right)+O_{c, p}(n^{-1/2})\;.
\]
\end{proof}

%%%%%%%%%%%%%%%%%%%%%%%%%%%%%%%%%%%%%%%%%%%%%%%%%%.       Preliminaries
 %%%%%%%%%%%%
 %%%%%%%%%%%%%%%%%%%%%%%%%%%%%%%%%%%%%%
\subsection{Remaining proofs}
\label{sec:preliminaries}
We start by collecting some preliminaries about the descents and valleys in a random permutation that will be useful later. 
\subsubsection{Descents and Valleys in random permutations}
We recall some results on the moments of the number of descents $\des(\pi)$ and the number of valleys $\vl(\pi)$ in a uniform random permutation that will be useful later. 

Let $D_n:=\des(\pi)$ be the number of descents in a random permutation $\pi\sim \nu_n$. The asymptotic properties of $D_n$ are studied by several authors. We refer the interested reader to~\cite{FKLP, Pitman97} and the references therein for more details. For our purposes, we need the asymptotics for the expectation and variance of $D_n$, which is easy to derive (also see~\cite{Alperen}). 
\begin{lemma}\label{lem:des-moments}
Let $D_n:=\des(\pi)$ be the number of descents in a random permutation $\pi\sim \nu_n$. Then, 
\[
\bar d_n=\E_{\nu_n}[D_n]=\frac{n-1}{2},
\qquad
\sigma_n^2=\Var_{\nu_n}(D_n)=\frac{n+1}{12}.
\]
\end{lemma}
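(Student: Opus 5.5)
We write $D_n=\sum_{i=1}^{n-1}I_i$, where $I_i=\mathbf 1\{\pi_i>\pi_{i+1}\}$ for $\pi\sim\nu_n$, and compute the first two moments directly from the indicators.

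\emph{Mean.} The pair $(\pi_i,\pi_{i+1})$ is equally likely to be in either relative order, so $\E_{\nu_n}[I_i]=1/2$. Linearity then gives $\bar d_n=(n-1)/2$.

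\emph{Variance.} We expand
\[
\Var_{\nu_n}(D_n)=\sum_i\Var(I_i)+2\sum_{i<j}\mathrm{Cov}(I_i,I_j).
\]
Each diagonal term equals $1/4$, so these contribute $(n-1)/4$.

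When $|i-j|\ge 2$, the indicators $I_i$ and $I_j$ depend on disjoint sets of positions. The relative orders within disjoint sets of positions of a uniform permutation are independent, so these covariances vanish.

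The remaining terms have $j=i+1$. Here $I_iI_{i+1}=1$ exactly when $\pi_i>\pi_{i+1}>\pi_{i+2}$. This is one of the $3!$ equally likely relative orders of three positions, so it has probability $1/6$. Hence
\[
\mathrm{Cov}(I_i,I_{i+1})=\tfrac16-\tfrac14=-\tfrac1{12}.
\]
There are $n-2$ such adjacent pairs.

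Combining the pieces,
\[
\sigma_n^2=\frac{n-1}{4}-\frac{2(n-2)}{12}=\frac{3(n-1)-2(n-2)}{12}=\frac{n+1}{12}.
\]
This computation is valid for $n\ge2$; the case $n=1$ is trivial.

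The only step needing justification is the independence claim for non-adjacent indicators. It follows because, conditioned on the set of values occupying positions $\{j,j+1\}$, the values at positions $\{i,i+1\}$ remain uniformly ordered.

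As a cross-check, one could instead use the Eulerian generating function, but the indicator computation is the whole proof.
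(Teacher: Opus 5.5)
Your indicator computation is correct, and it is the standard derivation the paper alludes to. The paper gives no proof of its own; it only calls these moments easy to derive and cites \cite{Alperen}.

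There are two small points to tighten.
\begin{itemize}
\item For the independence of $I_i$ and $I_j$ when $|i-j|\ge 2$, condition on the ordered pair $(\pi_j,\pi_{j+1})$, which determines $I_j$, rather than on the unordered set of values. As written, conditioning only on the unordered set does not by itself give independence from $I_j$. Given the ordered pair, the other $n-2$ positions carry a uniform arrangement of the remaining values, so $\Pbb(I_i=1\mid \pi_j,\pi_{j+1})=1/2$.
\item The case $n=1$ is not trivial; it is a genuine exception. There $\Var_{\nu_1}(D_1)=0\neq 2/12$, so the variance formula holds only for $n\ge 2$. This is exactly the range where your count of $n-2$ adjacent pairs makes sense.
\end{itemize}
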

It is well-known that the moment generating function $\mathbb{E}[t^{D_n}]$ of $D_n$ is given by the Eulerian polynomial, which is real-rooted~\cite{Pitman97}. It follows that $D_n$ is a sum of independent Bernoulli random variables. In particular, there exist numbers $\alpha_{n,1},\dots,\alpha_{n,n-1}\in(0,1)$ such that
\[
\E_{\nu_n}[t^{D_n}]=\prod_{j=1}^{n-1}(1-\alpha_{n,j}+\alpha_{n,j}t).
\]

\begin{lemma}\label{lem:cgf}
Define the cumulant generating function of $D_n$ by
\[
K_n(t):=\log\E_{\nu_n}[e^{tD_n}] = \sum_{j=1}^{n-1}\log\bigl(1-\alpha_{n,j}+\alpha_{n,j}e^t\bigr).
\]
Then, for $|t|\le 1$, we have
%\begin{equation}\label{eq:K-third}
%|K_n^{(3)}(t)|\le Cn.
%\end{equation}
%Consequently,
\begin{align}
K_n(t)&=\bar d_n t+\frac12\sigma_n^2 t^2+O(n|t|^3),\label{eq:K-taylor}\\
{K_n'(t)}&{=\bar d_n+\sigma_n^2 t+O(n|t|^2),}\label{eq:Kprime-taylor}\\
{K_n''(t)}&{=\sigma_n^2+O(n|t|),}\label{eq:Ksecond-taylor}
\end{align}
where the implicit constant is uniform for all $|t|\le 1$.
\end{lemma}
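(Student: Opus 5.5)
The argument is a direct Taylor expansion of each summand, with the Bernoulli structure of $D_n$ giving uniform control of the derivatives. Write $f_j(t):=\log(1-\alpha_{n,j}+\alpha_{n,j}e^t)$, so that $K_n=\sum_j f_j$. Since $K_n(0)=0$, $K_n'(0)=\E_{\nu_n}[D_n]=\bar d_n$ and $K_n''(0)=\Var_{\nu_n}(D_n)=\sigma_n^2$ by Lemma~\ref{lem:des-moments}, all three expansions \eqref{eq:K-taylor}--\eqref{eq:Ksecond-taylor} follow from Taylor's theorem with Lagrange remainder, once we show
\[
\sup_{|t|\le 1}|K_n^{(3)}(t)|\le Cn
\]
with $C$ an absolute constant. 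Indeed, the remainders are then bounded by $\frac{Cn}{6}|t|^3$, $\frac{Cn}{2}|t|^2$ and $Cn|t|$, respectively, because the segment between $0$ and $t$ stays in $[-1,1]$.

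For the third-derivative bound, observe that $f_j'(t)=\beta_j(t):=\frac{\alpha_{n,j}e^t}{1-\alpha_{n,j}+\alpha_{n,j}e^t}\in(0,1)$. This is the mean of a tilted Bernoulli variable, and $\beta_j'=\beta_j(1-\beta_j)$. It follows that
\[
f_j''=\beta_j(1-\beta_j),\qquad f_j'''=\beta_j(1-\beta_j)(1-2\beta_j).
\]
Hence $|f_j'''(t)|\le 1/4$ for every real $t$, uniformly in $\alpha_{n,j}\in(0,1)$. Summing over the $n-1$ factors gives $|K_n^{(3)}(t)|\le (n-1)/4$ for all $t$, which is even stronger than what we need. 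The only thing to verify is that the product representation from real-rootedness indeed has all $\alpha_{n,j}\in(0,1)$, so that each $f_j$ is well defined and smooth on $\R$; this is given in the paper.

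There is no real obstacle here. The one point worth making explicit is that the constant is uniform in $n$, and this is exactly what the Bernoulli decomposition provides. Without it, one could instead bound the third cumulant of the tilted law by $\E|D_n-\E D_n|^3$ under the tilt, but that route needs more work. With the decomposition it is a one-line bound.
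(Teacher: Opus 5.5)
Your proposal is correct and follows essentially the same route as the paper: bound $K_n^{(3)}$ by $O(n)$ using the Bernoulli product decomposition, then apply Taylor's theorem with $K_n(0)=0$, $K_n'(0)=\bar d_n$ and $K_n''(0)=\sigma_n^2$. The only difference is that the paper gets a bound on $k_\alpha^{(3)}$ from compactness of $[0,1]\times[-1,1]$, whereas you compute $f_j'''=\beta_j(1-\beta_j)(1-2\beta_j)$ explicitly, which gives the constant $1/4$ for all real $t$.
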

\begin{proof}
For $\alpha\in[0,1]$, set $k_\alpha(t):=\log(1-\alpha+\alpha e^t)$. Note that $k_{\alpha}$ is smooth and the third derivative $k_\alpha^{(3)}(t)$ is uniformly bounded on the compact set $[0,1]\times[-1,1]$. Summing over $j=1,\dots,n-1$ and using Taylor's theorem with remainder, we get 
\[
K_n(t) = K_n'(0)t+\frac{1}{2}K_n''(0)t^2+ O(n|t|^3)\;. 
\]
Using Lemma~\ref{lem:des-moments}, we identify $\E_{\nu_n}[D_n]=K_n'(0)=\bar d_n$ and $\Var_{\nu_n}[D_n]=K_n''(0)=\sigma_n^2$. {The two derivative estimates follow from the same uniform bound on $K_n^{(3)}$ by Taylor's theorem applied to $K_n'$ and $K_n''$.} This completes the proof.
\end{proof}

Let $V_n=\vl(\pi)$ denote the number of valleys in a permutation $\pi\sim \nu_n$. We need the following easy estimates on the expectation and the variance of $V_n$, which can be obtained, for instance, from the bivariate generating function of $V_n$~\cite[Proposition 8, Equation (16)]{FS14Enum}.

\begin{lemma}
\label{lem:valley-moments}
Under the uniform measure $\nu_n$,
\[
\bar v_n=\E_{\nu_n}[V_n]=\frac{n-2}{3},
\qquad
\Var_{\nu_n}(V_n)=\frac{2n+2}{45}, \qquad \forall n\geq 4\;.
\]
%In particular, $\|V_n-\bar v_n\|_{L^2(\nu_n)}\asymp \sqrt n$.
\end{lemma}
%
%\begin{proof}
%Write
%\[
%V_n=\sum_{j=2}^{n-1}Y_j,
%\qquad
%Y_j:=\1_{\{\Pi_{n,j-1}>\Pi_{n,j}<\Pi_{n,j+1}\}},
%\]
%where $\Pi_n=(\Pi_{n,1},\dots,\Pi_{n,n})$ is uniform on $S_n$. For fixed $j$, among the six relative orders of $(\Pi_{n,j-1},\Pi_{n,j},\Pi_{n,j+1})$, exactly two have the middle entry smallest. Hence
%\[
%\E[Y_j]=\frac13,
%\qquad
%\Var(Y_j)=\frac13\left(1-\frac13\right)=\frac29.
%\]
%Therefore $\E[V_n]=(n-2)/3$.
%
%If $|i-j|\ge 3$, the indicators $Y_i$ and $Y_j$ depend on disjoint sets of coordinates and are independent. If $j=i+1$, the events $Y_i=1$ and $Y_{i+1}=1$ are incompatible, since the first requires $\Pi_{n,i}<\Pi_{n,i+1}$ while the second requires $\Pi_{n,i}>\Pi_{n,i+1}$. Thus
%\[
%\Cov(Y_i,Y_{i+1})=0-\frac19=-\frac19.
%\]
%If $j=i+2$, then $Y_i=Y_{i+2}=1$ is equivalent to the alternating pattern
%\[
%\Pi_{n,i-1}>\Pi_{n,i}<\Pi_{n,i+1}>\Pi_{n,i+2}<\Pi_{n,i+3}.
%\]
%Among the $5!=120$ relative orders of these five entries, exactly $16$ satisfy this pattern, so
%\[
%\Pbb(Y_i=Y_{i+2}=1)=\frac{16}{120}=\frac{2}{15},
%\]
%and consequently
%\[
%\Cov(Y_i,Y_{i+2})=\frac{2}{15}-\frac19=\frac1{45}.
%\]
%Summing all covariances gives
%\begin{align*}
%\Var(V_n)
%&=(n-2)\frac29+2(n-3)\left(-\frac19\right)+2(n-4)\left(\frac1{45}\right)\\
%&=\frac{2n+2}{45}.
%\qedhere
%\end{align*}
%\end{proof}

We note the following immediate corollary of Lemma~\ref{lem:valley-moments}. 
\begin{corollary}
\label{cor:exponentialMoment-valley}
Let $\eta_n$ be as in~\eqref{eqn:Etan}. Then, there exists a constant $C=C(c, p)>0$ such that \[|\eta_n(V_n(\pi)-\bar{v}_n)|\leq C,\]
for all $\pi\in S_n$. Furthermore,
\[
\norm{\exp(\eta_n(V_n-\bar{v}_n))-1}_{L^2(\nu_n)}=O_{c, p}(n^{-1/2}).
\]
\end{corollary}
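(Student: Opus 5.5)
The first claim is a deterministic bound, and the second follows from it together with the variance in Lemma~\ref{lem:valley-moments}. We take the two in order.

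\emph{Uniform bound.} For every $\pi\in S_n$ we have $0\le V_n(\pi)\le n$, since valleys occupy indices in $\{2,\dots,n-1\}$. Also $0\le \bar v_n=(n-2)/3\le n$. Hence $|V_n(\pi)-\bar v_n|\le n$ for all $\pi$. By \eqref{eqn:Etan}, $\eta_n=-\frac{pq}{c^2 n}+O(n^{-3/2})$, so $|\eta_n|\le C_0/n$ for some $C_0=C_0(c,p)$ and all $n$. For the finitely many small $n$ where the asymptotic expansion has not yet taken hold, $\eta_n$ is simply a finite number, so these can be absorbed into the constant. Therefore $|\eta_n(V_n-\bar v_n)|\le C_0=:C$.

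\emph{$L^2$ estimate.} Since $|x|\le C$, the mean value theorem gives $|e^x-1|\le e^{C}|x|$ for such $x$. Applying this pointwise with $x=\eta_n(V_n-\bar v_n)$ and taking $L^2(\nu_n)$ norms,
\[
\norm{\exp(\eta_n(V_n-\bar v_n))-1}_{L^2(\nu_n)}\le e^{C}|\eta_n|\,\Var_{\nu_n}(V_n)^{1/2}.
\]
By Lemma~\ref{lem:valley-moments}, $\Var_{\nu_n}(V_n)=(2n+2)/45$ for $n\ge4$, so $\Var_{\nu_n}(V_n)^{1/2}=O(\sqrt n)$. Combined with $|\eta_n|=O_{c,p}(n^{-1})$, the right-hand side is $O_{c,p}(n^{-1/2})$. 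The cases $n<4$ are finitely many and are absorbed into the constant.

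There is no substantive obstacle here. The only point needing care is that the uniform bound (and hence the constant $C$) holds for every $n$ and not just asymptotically. This follows because $\eta_n$ is finite for each $n$, which requires $\tau_n$ to be well defined, i.e.\ $M_n>0$. That holds once $cn^{3/2}>n$, and the remaining small $n$ can be handled by enlarging the constant.
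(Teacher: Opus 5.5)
Your proof is correct and is essentially the paper's argument: the deterministic bound $|V_n(\pi)-\bar v_n|=O(n)$ together with $\eta_n=O_{c,p}(n^{-1})$ gives the uniform bound, and then a Lipschitz bound for $e^x$ on $[-C,C]$ (your mean value theorem with constant $e^{C}$) combined with $\norm{V_n-\bar v_n}_{L^2(\nu_n)}=\Var_{\nu_n}(V_n)^{1/2}=O(\sqrt n)$ from Lemma~\ref{lem:valley-moments} gives the $L^2$ estimate. Your small-$n$ remark is harmless but not quite right as phrased. Where $M_n\le 0$, $\eta_n$ is undefined rather than finite, so those $n$ cannot be absorbed into a constant. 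The statement is simply meant for large $n$, as the paper implicitly assumes.
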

\begin{proof}
Since $V_n(\pi)\leq n-1$ for all $\pi\in S_n$ and $\bar{v}_n=O(n)$, it follows that there is some constant $C$ such that $|V_n-\bar{v}_n|\leq Cn$ for all $\pi$. Since $\eta_n=O(n^{-1})$ from~\eqref{eqn:Etan}, it follows that $|\eta_n(V_n(\pi)-\bar{v}_n)|\leq C$ for some $C$.
%there exists a compact subset $K\subseteq \mathbb{R}$ such that $\eta_n(V_n(\pi)-\bar{v}_n)\in K$ for all $\pi\in S_n$. 
Since $e^{x}$ is Lipschitz on compact sets, there exists $L=L(C)>0$ such that $|e^{x}-1|\leq L|x|$ for all $|x|\leq C$. 
Therefore, 
\[
\norm{\exp(\eta_n(V_n-\bar{v}_n))-1}_{L^2(\nu_n)}\leq L{|\eta_n|}\norm{V_n-\bar{v}_n}_{L^2(\nu_n)}=O_{c, p}(n^{-1/2})\;.
\]
\end{proof}

\subsubsection{Some properties of \texorpdfstring{$J_{\pi}$}{Jpi}}
\begin{lemma}\label{lem:J-description}
There exists a constant $C=C(c, p)>0$ such that uniformly for all $\pi\in S_n$, we have
\begin{equation}
\label{eq:J-Uniform-bounds}
\Var_{\pi}(J_{\pi})\leq \E_\pi[J_\pi]\leq C\sqrt{n}\;.
\end{equation}
\end{lemma}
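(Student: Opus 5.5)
The plan is to use the product structure of $F_\pi$ from Proposition~\ref{prop:F_pi}. Since
\[
F_\pi(\tau_n z)=(1+p\tau_n z)^{a-v}(1+q\tau_n z)^{d-v}(1+\tau_n z)^{v},
\]
the normalized generating function $F_\pi(\tau_n z)/F_\pi(\tau_n)$ factors as a product of Bernoulli probability generating functions. Hence $J_\pi$ has the law of a sum of independent Bernoulli variables:
\begin{itemize}
\item $a-v$ of them have success probability $\beta_1:=p\tau_n/(1+p\tau_n)$;
\item $d-v$ of them have success probability $\beta_2:=q\tau_n/(1+q\tau_n)$;
\item $v$ of them have success probability $\beta_3:=\tau_n/(1+\tau_n)$.
\end{itemize}
Here $a=\asc(\pi)$, $d=\des(\pi)$ and $v=\vl(\pi)$. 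All the exponents are nonnegative, because $v\le\min(a,d)$, as recorded in the proof of Proposition~\ref{prop:F_pi}.

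The first inequality in \eqref{eq:J-Uniform-bounds} then follows at once. For a sum of independent Bernoulli$(\beta_i)$ variables, the variance is $\sum\beta_i(1-\beta_i)$ and the mean is $\sum\beta_i$. Since each term satisfies $\beta_i(1-\beta_i)\le\beta_i$, the variance is at most the mean.

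For the upper bound on the mean, each success probability is at most $\tau_n$, because $\beta_3=\tau_n/(1+\tau_n)\le\tau_n$ and likewise for $\beta_1,\beta_2$. The total number of Bernoulli summands is $(a-v)+(d-v)+v=n-1-v\le n-1$. Therefore
\[
\E_\pi[J_\pi]\le (n-1)\tau_n\le \frac{n^2}{M_n}.
\]
Since $m=\lfloor cn^{3/2}\rfloor$, we have $M_n=m-n+1\ge \tfrac{c}{2}n^{3/2}$ for all $n\ge n_0(c)$. This gives $\E_\pi[J_\pi]\le (2/c)\sqrt n$ for such $n$.

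Finitely many small $n$ need separate handling. For those $n$, use the trivial bound $J_\pi\le n-1$, and enlarge $C$ to absorb them. One point needs care: $M_n$ must be positive so that $\tau_n>0$ and $J_\pi$ is well defined. This holds once $cn^{3/2}\ge n$, so for very small $n$ one should either note that the statement is vacuous or simply restrict to $n\ge n_0$. There is no genuine obstacle here. The only step requiring attention is identifying $J_\pi$ as a Poisson–binomial variable via the factorization; everything after that is elementary.

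With $C=\max(2/c,\text{small-}n\text{ constant})$, which depends only on $c$ (and trivially on $p$), the bound holds uniformly in $\pi$, as required.
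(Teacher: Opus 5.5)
Your proposal is correct and follows the paper's own proof. Both factor $F_\pi(\tau_n z)/F_\pi(\tau_n)$ into Bernoulli generating functions, use $\beta(1-\beta)\le\beta$ to get variance $\le$ mean, and bound the mean by at most $n-1$ summands with success probabilities $O(\tau_n)=O_c(n^{-1/2})$; your explicit constant $2/c$ and your remark about small $n$ (where $M_n$ may be nonpositive) are more careful than the paper, which leaves these points implicit.
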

\begin{proof}
Using the definition of $F_{\pi}$ and $J_{\pi}$, we obtain that the generating function of $J_{\pi}$ is
\[
\frac{F_\pi(\tau_n z)}{F_\pi(\tau_n)}
=\left(\frac{1+p\tau_n z}{1+p\tau_n}\right)^{n-1-D_n(\pi)-V_n(\pi)}
\left(\frac{1+q\tau_n z}{1+q\tau_n}\right)^{D_n(\pi)-V_n(\pi)}
\left(\frac{1+\tau_n z}{1+\tau_n}\right)^{V_n(\pi)}.
\]
Each factor is the probability generating function of a Bernoulli random variable, which proves that $J_\pi$ is the sum of independent Bernoulli random variables consisting of:
\begin{itemize}
 \item $n-1-D_n(\pi)-V_n(\pi)$ Bernoullis with parameter $a_{p}$,
\item $D_n(\pi)-V_n(\pi)$ Bernoullis with parameter $a_q$,
 \item $V_n(\pi)$ Bernoulli with parameter $a_{1}$,
\end{itemize}
where \[a_r:=\frac{r\tau_n}{1+r\tau_n}, \qquad r\in [0, 1]\;.\] 

Since $\tau_n=O(n^{-1/2})$, Taylor expansion gives $a_r=O_{c, r}(n^{-1/2})$ where the implied constant only depends on $c, r$.  Since $J_{\pi}$ is a sum of at most $n$ Bernoullis with parameters $O(n^{-1/2})$, it follows that for every $\pi\in S_n$, we have
\[\Var{_{\pi}}(J_{\pi})\leq \mathbb{E}{_{\pi}}(J_{\pi})= {O_{c,p}(\sqrt n)},\]
where the implied constant can be chosen depending only on $c, p$. Since the variance of a Bernoulli sum is bounded by its mean, we conclude~\eqref{eq:J-Uniform-bounds}.
\end{proof}

Let $\bar d_n:=\mathbb{E}_{\nu_n}[\des(\pi)]$ and $\bar v_n = \mathbb{E}_{\nu_n}[\vl(\pi)]$ be the expected number of descents and the expected number of valleys in a uniform random permutation. 

\begin{lemma}\label{lem:J-centered-mean}
Let
\[
\bar m_n:=(n-1-\bar d_n-\bar v_n)a_p+(\bar d_n-\bar v_n)a_q+\bar v_n a_1.
\]
%Then, for every $\pi\in S_n$,
%\begin{equation}\label{eq:EJpi-center}
%\E_\pi[J_\pi]-\bar m_n
%=
%\bigl(D_n(\pi)-\bar d_n\bigr)(a_q-a_p)
%+
%\bigl(V_n(\pi)-\bar v_n\bigr)(a_1-a_p-a_q).
%\end{equation}
Then,
\begin{equation}\label{eq:EJpi-L2}
\bigl\|\E_\pi[J_\pi]-\bar m_n\bigr\|_{L^2(\nu_n)}=O_{c,p}(1).
\end{equation}
\end{lemma}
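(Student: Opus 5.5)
By Lemma~\ref{lem:J-description}, $J_\pi$ is a sum of independent Bernoulli variables. Its expectation is therefore exactly linear in $(D_n(\pi),V_n(\pi))$:
\[
\E_\pi[J_\pi]=(n-1-D_n(\pi)-V_n(\pi))a_p+(D_n(\pi)-V_n(\pi))a_q+V_n(\pi)a_1 .
\]
The first step is to subtract the definition of $\bar m_n$. All terms not involving the statistics cancel, and we get the exact identity
\[
\E_\pi[J_\pi]-\bar m_n=\bigl(D_n(\pi)-\bar d_n\bigr)(a_q-a_p)+\bigl(V_n(\pi)-\bar v_n\bigr)(a_1-a_p-a_q).
\]
To verify this, collect the coefficient of $D_n$, which is $a_q-a_p$, and the coefficient of $V_n$, which is $-a_p-a_q+a_1$.

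The second step applies Minkowski's inequality in $L^2(\nu_n)$:
\[
\bigl\|\E_\pi[J_\pi]-\bar m_n\bigr\|_{L^2(\nu_n)}\le |a_q-a_p|\,\sigma_n+|a_1-a_p-a_q|\,\sqrt{\Var_{\nu_n}(V_n)} .
\]
Lemma~\ref{lem:des-moments} gives $\sigma_n=\sqrt{(n+1)/12}=O(\sqrt n)$, and Lemma~\ref{lem:valley-moments} gives $\sqrt{\Var_{\nu_n}(V_n)}=O(\sqrt n)$. For $n<4$ the statement is trivial after enlarging the constant.

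It remains to bound the coefficients. Since $a_r=r\tau_n/(1+r\tau_n)$ and $\tau_n=O_c(n^{-1/2})$, we have $|a_q-a_p|\le |q-p|\tau_n=O_{c}(n^{-1/2})$. In fact $a_q-a_p=(q-p)\tau_n/((1+p\tau_n)(1+q\tau_n))$. This yields a contribution $O_{c,p}(1)$.

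For the valley coefficient, write $a_r=r\tau_n-r^2\tau_n^2+O(\tau_n^3)$. The linear terms cancel because $1-p-q=0$, leaving
\[
a_1-a_p-a_q=-(1-p^2-q^2)\tau_n^2+O(\tau_n^3)=-2pq\,\tau_n^2+O(\tau_n^3)=O_c(n^{-1}).
\]
Multiplied by $O(\sqrt n)$, this contributes only $O(n^{-1/2})$. Combining the two contributions gives~\eqref{eq:EJpi-L2}.

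There is no real obstacle here. The only points needing care are the exact cancellation of the constant terms against $\bar m_n$ and the second-order cancellation in $a_1-a_p-a_q$. The latter is not even needed for an $O(1)$ bound, since the crude estimate $O(n^{-1/2})$ for this coefficient already suffices.
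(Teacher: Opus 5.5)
Your proposal is correct and follows essentially the same route as the paper: the exact linear identity $\E_\pi[J_\pi]-\bar m_n=(D_n(\pi)-\bar d_n)(a_q-a_p)+(V_n(\pi)-\bar v_n)(a_1-a_p-a_q)$, Minkowski in $L^2(\nu_n)$, and the coefficient bounds $a_q-a_p=O(n^{-1/2})$, $a_1-a_p-a_q=O(n^{-1})$ against the $O(\sqrt n)$ standard deviations of $D_n$ and $V_n$. Your side remark that the cruder bound $O(n^{-1/2})$ on the valley coefficient would already suffice for the $O(1)$ conclusion is also correct.
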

\begin{proof}
Using the Bernoulli decomposition of $J_{\pi}$ and the {definition of $\bar m_n$}, {we have}
\[
{\E_\pi[J_\pi]-\bar m_n=(D_n(\pi)-\bar d_n)(a_q-a_p)+(V_n(\pi)-\bar v_n)(a_1-a_p-a_q).}
\]
{Hence}
\[
{\bigl\|\E_\pi[J_\pi]-\bar m_n\bigr\|_{L^2(\nu_n)}\le |a_q-a_p|\norm{D_n-\bar d_n}_{L^2(\nu_n)}+ |a_1-a_p-a_q|\norm{V_n-\bar v_n}_{L^2(\nu_n)}\;.}
\]
Next, we observe that 
\begin{align*}
a_q-a_p &=(q-p)\tau_n+O(\tau_n^2)=O_{c,p}(n^{-1/2}),\\
a_1-a_p-a_q&=-2pq\tau_n^2+O(\tau_n^3)=O_{c,p}(n^{-1}).
\end{align*}
Using Lemmas~\ref{lem:des-moments} and{~}\ref{lem:valley-moments},
\begin{align*}
\bigl\|\E_\pi[J_\pi]-\bar m_n\bigr\|_{L^2(\nu_n)}
&\le |a_q-a_p|\,\|D_n-\bar d_n\|_{L^2(\nu_n)}
+|a_1-a_p-a_q|\,\|V_n-\bar v_n\|_{L^2(\nu_n)}\\
&=O(n^{-1/2})\cdot O(\sqrt n)+O(n^{-1})\cdot O(\sqrt n)
=O(1).
\qedhere
\end{align*}
\end{proof}

\begin{lemma}\label{lem:J-tail}
For every $B>0$ large enough, there exists $\gamma_B>0$ such that uniformly in $\pi\in S_n$,
\[
\Pbb_\pi(J_\pi>B\sqrt n)\le e^{-\gamma_B\sqrt n}
\]
for all large $n$.
\end{lemma}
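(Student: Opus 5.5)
The plan is to use the Bernoulli decomposition of $J_\pi$ from the proof of Lemma~\ref{lem:J-description} together with a Chernoff bound whose constants do not depend on $\pi$. By that decomposition, $J_\pi$ is a sum of at most $n-1$ independent Bernoulli variables, and each parameter is one of $a_p,a_q,a_1$. Every such parameter is at most $a_1=\tau_n/(1+\tau_n)\le \tau_n$, since $r\in\{p,q,1\}\subseteq(0,1]$ and $x\mapsto x/(1+x)$ is increasing. Recall also that $\tau_n=\frac{1}{c\sqrt n}+O_c(n^{-1})$. Hence for all large $n$ we have $\tau_n\le \frac{2}{c\sqrt n}$, uniformly in $\pi$.

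The first step is to bound the moment generating function uniformly. For $\lambda>0$ and any Bernoulli variable with parameter $a\le \tau_n$, we have $1+a(e^\lambda-1)\le \exp(\tau_n(e^\lambda-1))$. Taking the product over at most $n-1$ factors gives
\[
\E_\pi[e^{\lambda J_\pi}]\le \exp\bigl(n\tau_n(e^\lambda-1)\bigr)\le \exp\Bigl(\tfrac{2\sqrt n}{c}(e^\lambda-1)\Bigr),
\]
and this holds for every $\pi\in S_n$. Equivalently, $J_\pi$ is stochastically dominated by a $\mathrm{Binomial}(n,\tau_n)$ variable, or by a Poisson variable with mean $n\tau_n\le 2\sqrt n/c$. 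Either description works.

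The second step is Markov's inequality with the fixed choice $\lambda=1$:
\[
\Pbb_\pi(J_\pi>B\sqrt n)\le \exp\Bigl(-B\sqrt n+\tfrac{2(e-1)}{c}\sqrt n\Bigr).
\]
Whenever $B>\frac{4(e-1)}{c}$, the exponent is at most $-\frac{B}{2}\sqrt n$, so we can take $\gamma_B=B/2$. This is exactly the meaning of ``$B$ large enough'', namely $B$ exceeding a threshold that depends only on $c$. If one wants the sharp rate instead, one can optimise with $\lambda=\log(Bc/2)$, which gives $\gamma_B=B\log(Bc/(2e))+2/c$, but this refinement is not needed.

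No step here is a real obstacle. The only point that needs care is uniformity in $\pi$. It holds because the parameters of the Bernoulli summands are bounded by $\tau_n$ regardless of the values of $\des(\pi)$ and $\vl(\pi)$, and the number of summands is always $n-1$. The phrase ``for all large $n$'' covers the step that replaces $\tau_n$ by $2/(c\sqrt n)$ via the asymptotic $\tau_n=\frac{1}{c\sqrt n}+O_c(n^{-1})$.
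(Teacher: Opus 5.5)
Your proposal is correct and follows essentially the same route as the paper. Both arguments bound $\E_\pi[e^{J_\pi}]$ over the Bernoulli decomposition using $1+a(e-1)\le e^{(e-1)a}$, and both then apply Markov's inequality with $\lambda=1$. The paper goes through the uniform mean bound $\E_\pi[J_\pi]\le C\sqrt n$ and gets $\gamma_B=B-C$, while you go through the parameter bound $a\le\tau_n$ and get $\gamma_B=B/2$. One harmless slip: the number of summands is $n-1-\vl(\pi)$, not always $n-1$, but your argument only uses ``at most $n-1$''.
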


\begin{proof}
Recall from Lemma~\ref{lem:J-description} that $J_\pi$ is a sum of independent Bernoulli variables. There exists $C=C(c, p)>0$ such that
\[
\E_\pi[e^{J_\pi}]\le \exp\!\bigl((e-1)\E_\pi[J_\pi]\bigr)\leq \exp(C\sqrt{n}),
\]
where the first inequality uses $1+x(e-1)\leq \exp((e-1)x)$ for $x\in[0, 1]$ and the second inequality uses {$\E_\pi[J_\pi]\le C\sqrt n$} from \eqref{eq:J-Uniform-bounds}. Fix $B>C$, Markov's inequality gives
\[
\Pbb_\pi(J_\pi>B\sqrt n)
\le e^{-B\sqrt n}\E_\pi[e^{J_\pi}]
\le e^{-(B-C)\sqrt n}.
\]
\end{proof}

\subsubsection{Some properties of \texorpdfstring{$R_{n,J_{\pi}}$}{RnJpi}}

\begin{lemma}\label{lem:R-discrete}
Fix $B>0$. There exist positive constants $c_B,C_B$ such that for all large $n$ and all integers $0\le j\le B\sqrt n$,
\begin{equation}\label{eq:R-bounds-1}
c_B\le R_{n,j}\le 1,
%\qquad
%|R_{n,j+1}-R_{n,j}|\le \frac{C_B}{\sqrt n},
%\qquad
%|R_{n,j+2}-2R_{n,j+1}+R_{n,j}|\le \frac{C_B}{n}.
\end{equation}
For all $0\le j,k\le B\sqrt n$, we have
\begin{equation}\label{eq:R-lipschitz}
|R_{n,j}-R_{n,k}|\le \frac{C_B}{\sqrt n}|j-k|,
\end{equation}
and
\begin{equation}\label{eq:R-taylor-discrete}
\bigl|R_{n,j}-R_{n,k}-(j-k)(R_{n,k+1}-R_{n,k})\bigr|
\le \frac{C_B}{n}(j-k)^2.
\end{equation}
\end{lemma}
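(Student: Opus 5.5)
We work with $\log R_{n,j}=\sum_{\ell=0}^{j-1}\bigl[\log(1-\ell/n)-\log(1+\ell/M_n)\bigr]$ and use that $M_n=m-n+1\ge c' n^{3/2}$ for large $n$. For $0\le j\le B\sqrt n$ every factor has $\ell/n\le B/\sqrt n\le 1/2$ and $\ell/M_n\le 1$, so each factor $\frac{1-\ell/n}{1+\ell/M_n}$ lies in $(0,1]$. This gives $R_{n,j}\le 1$. For the lower bound we use $\log(1-x)\ge -2x$ for $x\le 1/2$ and $\log(1+y)\le y$, which give
\[
\log R_{n,j}\ge -\sum_{\ell<j}\Bigl(\frac{2\ell}{n}+\frac{\ell}{M_n}\Bigr)\ge -\frac{j^2}{n}-\frac{j^2}{2M_n}\ge -B^2-1 .
\]
So \eqref{eq:R-bounds-1} holds with $c_B=e^{-B^2-1}$.

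Next we study the first difference. We have $R_{n,k+1}-R_{n,k}=R_{n,k}(\rho_k-1)$ with $\rho_k:=\frac{1-k/n}{1+k/M_n}$, and
\[
1-\rho_k=\frac{k/n+k/M_n}{1+k/M_n}\in\Bigl[0,\tfrac{2k}{n}\Bigr].
\]
Hence $|R_{n,k+1}-R_{n,k}|\le 2k/n\le 2B/\sqrt n$ for $k\le B\sqrt n$. Telescoping from $k$ to $j$ gives \eqref{eq:R-lipschitz} with $C_B=2B$. (The range condition is needed only so that all intermediate indices lie in $[0,B\sqrt n]$, which they do.)

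For \eqref{eq:R-taylor-discrete} we bound the second difference and then sum. Write $\Delta_i:=R_{n,i+1}-R_{n,i}=-R_{n,i}(1-\rho_i)$. Then
\[
\Delta_{i+1}-\Delta_i=-(R_{n,i+1}-R_{n,i})(1-\rho_{i+1})-R_{n,i}\bigl[(1-\rho_{i+1})-(1-\rho_i)\bigr].
\]
The first term is bounded by $(2B/\sqrt n)(2B/\sqrt n)=4B^2/n$. For the second term, $x\mapsto \frac{x/n+x/M_n}{1+x/M_n}$ has derivative bounded by $2/n$ on $[0,n]$, so $|\rho_{i+1}-\rho_i|\le 2/n$. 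Thus $|\Delta_{i+1}-\Delta_i|\le C_B/n$.

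Now suppose $j\ge k$. Then
\[
R_{n,j}-R_{n,k}-(j-k)\Delta_k=\sum_{i=k}^{j-1}(\Delta_i-\Delta_k),
\]
and each term satisfies $|\Delta_i-\Delta_k|\le (i-k)C_B/n$. Summing gives a bound of $C_B(j-k)^2/n$. For $j<k$ we write
\[
R_{n,j}-R_{n,k}-(j-k)\Delta_k=\sum_{i=j}^{k-1}(\Delta_k-\Delta_i),
\]
and the same bound on $|\Delta_k-\Delta_i|$ applies. Here $\Delta_k$ involves the index $k+1$, so we need $k+1\le B\sqrt n+1$. We handle this by proving all estimates on the slightly larger range $[0,(B+1)\sqrt n]$.

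The only delicate point is this bookkeeping for $j<k$ together with keeping the constants uniform; everything else is elementary Taylor estimation.
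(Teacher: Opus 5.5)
Your proof is correct and follows the paper's argument essentially step for step. You bound $\log R_{n,j}$ termwise, control the ratio $\rho_k$ (the paper's $f_{n,k}$) to get first differences of order $n^{-1/2}$, and write the second difference as $R_{n,i}\bigl((f_{n,i}-1)(f_{n,i+1}-1)+(f_{n,i+1}-f_{n,i})\bigr)$ before summing. Beyond that, you also treat the case $j<k$ and the index $k+1$ explicitly via the enlarged range $[0,(B+1)\sqrt n]$. The paper skips both with ``for definiteness'', even though \eqref{eq:R-taylor-discrete} is not symmetric in $j,k$ and both cases are used later with $k=j_\pi$. This is a genuine tightening rather than a different route.
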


\begin{proof}
The upper bound $R_{n,j}\le 1$ is immediate from the definition of $R_{n, j}$. For the lower bound, write
\[
-\log R_{n,j}
=\sum_{\ell=1}^{j-1}\bigl(-\log(1-\ell/n)+\log(1+\ell/M_n)\bigr).
\]
Since $\ell\leq j\le B\sqrt n$, we have $\ell/n\le B/\sqrt n\le 1/2$ for all large $n$. Therefore,
\[
-\log(1-\ell/n)+\log(1+\ell/M_n)
\le C\left(\frac{\ell}{n}+\frac{\ell}{M_n}\right)\leq C\frac{\ell}{n},
\]
with some absolute constant $C>0$. Summing over $\ell$ and using the fact that $j\leq C\sqrt{n}$, we obtain $-\log R_{n,j}\le C_B$. Exponentiating yields{~}\eqref{eq:R-bounds-1}.

Now set
\[
f_{n,j}:=\frac{R_{n,j+1}}{R_{n,j}}=\frac{1-j/n}{1+j/M_n}.
\]
Observe that for $j\le B\sqrt n$, we have
\begin{equation}
\label{eqn:FirstDifference}
    |f_{n,j}-1|
=\left|\frac{1-j/n}{1+j/M_n}-1\right|
\le \left(\frac{j}{n}+\frac{j}{M_n}\right)
\le \frac{C_B}{\sqrt n}.
\end{equation}

Therefore,  
\[|R_{n,j+1}-R_{n,j}|=|R_{n,j}||(f_{n,j}-1)|\leq \frac{C_B}{\sqrt{n}}\;.\]
Equation~\eqref{eq:R-lipschitz} follows by writing $|R_{n, j}-R_{n, k}|$ as a telescoping sum and applying the triangle inequality. 

Similarly,
\[
R_{n,j+2}-2R_{n,j+1}+R_{n,j}
=R_{n,j}\Bigl((f_{n,j}-1)(f_{n,j+1}-1)+(f_{n,j+1}-f_{n,j})\Bigr).
\]
The product term is $O_B(n^{-1})$ by the first difference estimate~\eqref{eqn:FirstDifference}. A direct calculation with the explicit formula for $f_{n,j}$ gives $|f_{n,j+1}-f_{n,j}|\le C_B/n$ for $j\le B\sqrt n$. This yields
\begin{equation}
    \label{eqn:SecondDifference}
    |R_{n,j+2}-2R_{n,j+1}+R_{n,j}|\le \frac{C_B}{n}\;.
\end{equation}
To see \eqref{eq:R-taylor-discrete} (assume for definiteness that $j>k$),{ write}
\[
R_{n,j}-R_{n,k}-(j-k)(R_{n,k+1}-R_{n,k})
=\sum_{u=k}^{j-1}\sum_{\ell=k}^{u-1}(R_{n,\ell+2}-2R_{n,\ell+1}+R_{n,\ell})\;.
\]
The bound~\eqref{eq:R-taylor-discrete} follows from the second difference estimate~\eqref{eqn:SecondDifference}. 

\end{proof}

\begin{lemma}\label{lem:R-factor-constant}
Let $\bar m_n$ be as in Lemma~\ref{lem:J-centered-mean}
\[
\bar m_n=(n-1-\bar d_n-\bar v_n)a_p+(\bar d_n-\bar v_n)a_q+\bar v_n a_1.
\]
Set $r_n:=R_{n,\lfloor \bar m_n\rfloor}$. Then,
\begin{equation}\label{eq:R-L2-final}
\left\|\frac{\E_\pi[R_{n,J_\pi}]}{r_n}-1\right\|_{L^2(\nu_n)}=O_{c,p}(n^{-1/2}).
\end{equation}
\end{lemma}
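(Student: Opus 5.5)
We will compare $\E_\pi[R_{n,J_\pi}]$ with $R_{n,k_n}$, where $k_n:=\lfloor \bar m_n\rfloor$, using a discrete second-order Taylor expansion around $k_n$. By Lemma~\ref{lem:R-discrete}, $r_n\ge c_B$ for a suitable $B$, since $\bar m_n=O(\sqrt n)$ by Lemma~\ref{lem:J-description}. It therefore suffices to show that $\|\E_\pi[R_{n,J_\pi}]-r_n\|_{L^2(\nu_n)}=O(n^{-1/2})$.

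Fix $B$ large enough that Lemma~\ref{lem:J-tail} applies and $k_n\le B\sqrt n/2$. Split according to whether $J_\pi\le B\sqrt n$. On the complement, $0\le R_{n,J_\pi}\le 1$, so this part contributes at most $e^{-\gamma_B\sqrt n}$, uniformly in $\pi$. This is negligible.

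On the good event, apply \eqref{eq:R-taylor-discrete} with $j=J_\pi$ and $k=k_n$:
\[
R_{n,J_\pi}=r_n+(J_\pi-k_n)\Delta_n+E_\pi,\qquad \Delta_n:=R_{n,k_n+1}-R_{n,k_n},\quad |E_\pi|\le \tfrac{C_B}{n}(J_\pi-k_n)^2 .
\]
By \eqref{eq:R-lipschitz}, $|\Delta_n|\le C_B n^{-1/2}$. Taking $\E_\pi$ on the good event and reinserting the bad event (which costs $O(\sqrt n\, e^{-\gamma_B\sqrt n})$ because $J_\pi\le n$), we get
\[
\bigl|\E_\pi[R_{n,J_\pi}]-r_n\bigr|\le \frac{C_B}{\sqrt n}\,\bigl|\E_\pi[J_\pi]-k_n\bigr|+\frac{C_B}{n}\,\E_\pi[(J_\pi-k_n)^2]+O(e^{-\gamma\sqrt n}).
\]
Since $|k_n-\bar m_n|\le 1$, we have $|\E_\pi J_\pi-k_n|\le |\E_\pi J_\pi-\bar m_n|+1$. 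Also
\[
\E_\pi[(J_\pi-k_n)^2]=\Var_\pi(J_\pi)+(\E_\pi J_\pi-k_n)^2\le C\sqrt n+2(\E_\pi J_\pi-\bar m_n)^2+2,
\]
using Lemma~\ref{lem:J-description}.

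Now take $L^2(\nu_n)$ norms. Set $X_\pi:=\E_\pi J_\pi-\bar m_n$. The first term gives $n^{-1/2}(\|X\|_{L^2}+1)=O(n^{-1/2})$ by \eqref{eq:EJpi-L2}. The variance part of the second term gives $O(n^{-1}\sqrt n)=O(n^{-1/2})$. The remaining piece is $n^{-1}\|X^2\|_{L^2}=n^{-1}\|X\|_{L^4}^2$, and this is the main obstacle: Lemma~\ref{lem:J-centered-mean} only controls the $L^2$ norm of $X$. To handle it, use the explicit identity from the proof of Lemma~\ref{lem:J-centered-mean}, $X_\pi=(D_n-\bar d_n)(a_q-a_p)+(V_n-\bar v_n)(a_1-a_p-a_q)$. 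This gives the crude uniform bound $|X_\pi|\le C n\cdot n^{-1/2}=C\sqrt n$. Hence
\[
n^{-1}\|X^2\|_{L^2}\le n^{-1}\,\sup_\pi|X_\pi|\,\|X\|_{L^2}=O(n^{-1/2}).
\]
(Alternatively, one could use the fourth central moment of $D_n$, which is $O(n^2)$ because $D_n$ is a Bernoulli sum.) Combining these bounds and dividing by $r_n\ge c_B$ yields \eqref{eq:R-L2-final}.
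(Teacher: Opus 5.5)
Your proof is correct. It differs from the paper's in where the discrete Taylor expansion is centered.

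The paper expands $R_{n,J_\pi}$ around the $\pi$-dependent point $j_\pi=\lfloor \E_\pi[J_\pi]\rfloor$ rather than around the deterministic point $\lfloor \bar m_n\rfloor$. With that center, the linear term is at most $C/\sqrt n$, because $0\le \E_\pi J_\pi-j_\pi<1$. The quadratic remainder is $\frac{C}{n}(\Var_\pi(J_\pi)+1)=O(n^{-1/2})$. Together these give the uniform-in-$\pi$ estimate $\E_\pi[R_{n,J_\pi}]=R_{n,j_\pi}+O(n^{-1/2})$. Only afterwards does the paper compare $R_{n,j_\pi}$ with $r_n$ via the Lipschitz bound \eqref{eq:R-lipschitz}. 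That bound is first order in $|\E_\pi J_\pi-\bar m_n|$, so it needs nothing beyond the $L^2$ estimate \eqref{eq:EJpi-L2}.

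You expand once around the deterministic center, which merges these two steps. The cost is the term $n^{-1}(\E_\pi J_\pi-\bar m_n)^2$ in the remainder, i.e.\ exactly the $L^4$ obstacle you identify. Your fix via $\sup_\pi|X_\pi|=O(\sqrt n)$ is valid and closes the argument. This bound also follows directly from Lemma~\ref{lem:J-description}, since both $\E_\pi J_\pi$ and $\bar m_n$ lie in $[0,C\sqrt n]$.

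So the paper's two-step route needs no control beyond second moments, while yours is a single expansion that needs one extra crude bound. As a small bonus, you explicitly account for reinserting the event $\{J_\pi>B\sqrt n\}$ into the linear term. The paper passes over this step silently.
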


\begin{proof}
Throughout the proof, we assume that $n$ and $B>0$ are large so that Lemma~\ref{lem:J-tail} holds and, in addition, $\E_\pi[J_\pi]\le \tfrac12 B\sqrt n$ for every $\pi\in S_n$, which is possible by \eqref{eq:J-Uniform-bounds}. Set $j_\pi:=\lfloor \E_\pi[J_\pi]\rfloor$. Note that $0\le j_\pi\le B\sqrt n$.

We now show that $\E_{\pi}[R_{n, J_{\pi}}]=R_{n, j_{\pi}}+O_{c, p}({n^{-1/2}})$ for all $\pi\in S_n$. To this end, we first observe that for any $\pi\in S_n$, we have
\begin{align*}
    {\left|\E_{\pi}[R_{n, J_{\pi}}]-\E_{\pi}[R_{n, J_{\pi}}; J_{\pi}\leq B\sqrt{n}]\right|\leq \Pbb_{\pi}(J_{\pi}> B\sqrt{n})\;.}
\end{align*}
By Lemma~\ref{lem:J-tail}, the probability of the event $\{J_\pi>B\sqrt n\}$ is exponentially small and $R_{n, j}\leq 1$ for any $j$; {hence this error is $O(e^{-\gamma_{B}\sqrt n})$} for some constant $\gamma_{B}>0$. 

On the event $\{J_\pi\le B\sqrt n\}$, use \eqref{eq:R-taylor-discrete} with $k=j_\pi$ and $j=J_\pi$ to obtain
\[
R_{n,J_\pi}
=R_{n,j_\pi}+(J_\pi-j_\pi)(R_{n,j_\pi+1}-R_{n,j_\pi})+\varepsilon_\pi,\qquad |\varepsilon_\pi|\le \frac{C}{n}(J_\pi-j_\pi)^2.
\]
Taking $\E_\pi$ and using the fact $0\le \E_\pi[J_\pi]-j_\pi<1$, Lemma~\ref{lem:R-discrete}, and Equation~\eqref{eq:J-Uniform-bounds}, we get
\begin{align*}
\E_\pi[R_{n,J_\pi};J_\pi\le B\sqrt n]
%&=R_{n,j_\pi}+O\!\left(\frac{1}{\sqrt n}\right)+O\!\left(\frac{\Var_\pi(J_\pi)+1}{n}\right)\\
&=R_{n,j_\pi}+O_{c,p}(n^{-1/2})\;.
\end{align*}
% Hence
% \begin{equation}\label{eq:R-vs-jpi-clean}
% \E_\pi[R_{n,J_\pi}]=R_{n,j_\pi}+O_{c,p}(n^{-1/2})
% \end{equation}
% uniformly in $\pi$.

Finally, we compare $R_{n,j_\pi}$ with $r_n=R_{n,\lfloor \overline{m}_n\rfloor}$. To this end, we use \eqref{eq:R-lipschitz},
\[
|R_{n,j_\pi}-r_n|
\le \frac{C}{\sqrt n}\,|j_\pi-\lfloor \bar m_n\rfloor|
\le \frac{C}{\sqrt n}\Bigl(1+|\E_\pi[J_\pi]-\bar m_n|\Bigr).
\]
Taking $L^2(\nu_n)$ norms and using Lemma~\ref{lem:J-centered-mean} shows that
\[
\|R_{n,j_\pi}-r_n\|_{L^2(\nu_n)}=O_{c,p}(n^{-1/2}).
\]
Since $r_n$ is bounded away from $0$ and $\infty$ by Lemma~\ref{lem:R-discrete}, we conclude the proof.
\end{proof}

\subsubsection{Proof of Proposition~\ref{prop:descent_tilt_measure}}
\label{sec:FirstMainLemma}
In this section, we prove Proposition~\ref{prop:descent_tilt_measure}. The proof relies on the following two {lemmas}. 

\begin{lemma}\label{lem:Ltheta_Bound}
Let $L_n^{\theta_n}$ be as in Proposition~\ref{prop:descent_tilt_measure}. {Then there} exists a constant $C=C(c,p)$ such that for all large $n$ we have
\[
\E_{\nu_n}\bigl[(L_n^{{\theta_n}})^2\bigr]\le C\;.
\]
\end{lemma}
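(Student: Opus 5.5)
The plan is to compute the second moment exactly through the cumulant generating function $K_n$ of Lemma~\ref{lem:cgf}. Since $L_n^{\theta_n}(\pi)=\exp(\theta_n D_n(\pi))/\E_{\nu_n}[e^{\theta_n D_n}]$, we have
\[
\E_{\nu_n}\bigl[(L_n^{\theta_n})^2\bigr]=\frac{\E_{\nu_n}[e^{2\theta_n D_n}]}{\bigl(\E_{\nu_n}[e^{\theta_n D_n}]\bigr)^2}=\exp\bigl(K_n(2\theta_n)-2K_n(\theta_n)\bigr).
\]
So it suffices to show that $K_n(2\theta_n)-2K_n(\theta_n)$ is bounded above by a constant depending only on $c,p$.

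By \eqref{eqn:Thetan}, $\theta_n=-\frac{2p-1}{c\sqrt n}+O(n^{-1})$, so $|2\theta_n|\le 1$ for all large $n$, and the expansion \eqref{eq:K-taylor} applies at both $t=\theta_n$ and $t=2\theta_n$. The linear terms cancel exactly: $\bar d_n(2\theta_n)-2\bar d_n\theta_n=0$. The quadratic terms give
\[
\tfrac12\sigma_n^2(4\theta_n^2)-2\cdot\tfrac12\sigma_n^2\theta_n^2=\sigma_n^2\theta_n^2=\frac{n+1}{12}\,\theta_n^2=\frac{(2p-1)^2}{12c^2}+O(n^{-1/2}),
\]
using Lemma~\ref{lem:des-moments}. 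The remainders are $O(n|\theta_n|^3)=O_{c,p}(n^{-1/2})$. Hence
\[
\E_{\nu_n}\bigl[(L_n^{\theta_n})^2\bigr]=\exp\Bigl(\frac{(2p-1)^2}{12c^2}+O_{c,p}(n^{-1/2})\Bigr),
\]
which is bounded by a constant $C(c,p)$ for all large $n$.

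I expect no real obstacle. The only point needing care is that the implicit constant in \eqref{eq:K-taylor} is uniform on $|t|\le1$, which Lemma~\ref{lem:cgf} provides. This requires $n$ to be large enough that $|2\theta_n|\le 1$, and that depends only on $c,p$.
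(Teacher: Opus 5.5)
Your proposal is correct and follows the paper's proof essentially verbatim: both write the second moment as $\exp\bigl(K_n(2\theta_n)-2K_n(\theta_n)\bigr)$ and apply the Taylor expansion \eqref{eq:K-taylor} to get $\sigma_n^2\theta_n^2+O(n|\theta_n|^3)=O_{c,p}(1)$. Your explicit limit $(2p-1)^2/(12c^2)$ and your check that $|2\theta_n|\le 1$ for large $n$ are correct refinements of the same argument.
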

\begin{proof}
By definition,
\[
\E_{\nu_n}\bigl[(L_n^{{\theta_n}})^2\bigr]
=\frac{\E_{\nu_n}[e^{2\theta_n D_n}]}{\E_{\nu_n}[e^{\theta_n D_n}]^2}
=\exp\bigl\{K_n(2\theta_n)-2K_n(\theta_n)\bigr\}.
\]
Lemma~\ref{lem:cgf} yields $K_n(2\theta_n)-2K_n(\theta_n)
=\sigma_n^2\theta_n^2+O(n|\theta_n|^3)$.
Since $\sigma_n^2=O(n)$ and $\theta_n=O_{c, p}(n^{-1/2})$, {it follows} that $K_n(2\theta_n)-2K_n(\theta_n)=O_{c, p}(1)$.
\end{proof}

\begin{lemma}
\label{lem:HnConcentration}
Write 
\[
\mathcal H_n(\pi):=\exp\!\bigl(\eta_n(V_n(\pi)-\bar v_n)\bigr)\,\frac{\E_\pi[R_{n,J_\pi}]}{r_n}{.}\]
Then, 
\[\norm{\mathcal{H}_n(\pi)-1}_{L^2(\nu_n)}=O_{c, p}(n^{-1/2})\;.\]
\end{lemma}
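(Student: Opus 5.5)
We write $\mathcal H_n-1$ as a product of two factors that are each close to $1$, and then bound the product through a standard decomposition. Set
\[
X_n(\pi):=\exp\bigl(\eta_n(V_n(\pi)-\bar v_n)\bigr)-1,\qquad Y_n(\pi):=\frac{\E_\pi[R_{n,J_\pi}]}{r_n}-1 .
\]
Then $\mathcal H_n-1=(1+X_n)(1+Y_n)-1=X_n+Y_n+X_nY_n$, so
\[
\norm{\mathcal H_n-1}_{L^2(\nu_n)}\le \norm{X_n}_{L^2(\nu_n)}+\norm{Y_n}_{L^2(\nu_n)}+\norm{X_nY_n}_{L^2(\nu_n)} .
\]

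The first two terms are already controlled by earlier results. Corollary~\ref{cor:exponentialMoment-valley} gives $\norm{X_n}_{L^2(\nu_n)}=O_{c,p}(n^{-1/2})$, and Lemma~\ref{lem:R-factor-constant} gives $\norm{Y_n}_{L^2(\nu_n)}=O_{c,p}(n^{-1/2})$.

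For the cross term, the plan is to put one factor in $L^\infty$ and keep the other in $L^2$. By the first part of Corollary~\ref{cor:exponentialMoment-valley}, $|\eta_n(V_n-\bar v_n)|\le C$ uniformly in $\pi$. Hence $\sup_\pi|X_n(\pi)|\le e^{C}+1$. This yields
\[
\norm{X_nY_n}_{L^2(\nu_n)}\le (e^C+1)\norm{Y_n}_{L^2(\nu_n)}=O_{c,p}(n^{-1/2}).
\]
The other choice would also work: $R_{n,j}\le 1$ and $r_n\ge c_B>0$ by Lemma~\ref{lem:R-discrete}, so $Y_n$ is uniformly bounded as well. Either way the cross term is $O_{c,p}(n^{-1/2})$.

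Adding the three bounds proves the lemma. No step is genuinely hard here, because the substantive estimates were done in Lemma~\ref{lem:R-factor-constant} and Corollary~\ref{cor:exponentialMoment-valley}. The one point needing care is that $\norm{X_nY_n}_{L^2}$ cannot be bounded by $\norm{X_n}_{L^2}\norm{Y_n}_{L^2}$. That is why the uniform $L^\infty$ bound on one factor is essential, and it is available from the deterministic bound $|\eta_n(V_n-\bar v_n)|\le C$.
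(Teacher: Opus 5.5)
Your proposal is correct and is essentially the paper's argument. The paper writes $\mathcal H_n-1=e^{\eta_n(V_n-\bar v_n)}\,Y_n+X_n$, which is your decomposition with $Y_n+X_nY_n$ grouped into one term. It then bounds the exponential factor uniformly using Corollary~\ref{cor:exponentialMoment-valley} and takes the two $L^2$ estimates from Lemma~\ref{lem:R-factor-constant} and Corollary~\ref{cor:exponentialMoment-valley}.
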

\begin{proof}
Let $C>0$ be such that $\exp\!\bigl(\eta_n(V_n(\pi)-\bar v_n)\bigr)\leq C$ for all $\pi\in S_n$. Then,
\begin{align*}
\norm{\mathcal{H}_n(\pi)-1}_{L^2(\nu_n)} &\leq C\norm{\frac{\E_\pi[R_{n,J_\pi}]}{r_n}-1}_{L^2(\nu_n)}+  \norm{\exp\!\bigl(\eta_n(V_n(\pi)-\bar v_n)\bigr)-1}_{L^2(\nu_n)}\;.
\end{align*}
The proof is complete by observing that $\norm{\frac{\E_\pi[R_{n,J_\pi}]}{r_n}-1}_{L^2(\nu_n)}=O(n^{-1/2})$ by Lemma~\ref{lem:R-factor-constant}, and $\norm{\exp\!\bigl(\eta_n(V_n(\pi)-\bar v_n)\bigr)-1}_{L^2(\nu_n)}=O(n^{-1/2})$ by Corollary~\ref{cor:exponentialMoment-valley}.
\end{proof}

\begin{proof}[Proof of Proposition~\ref{prop:descent_tilt_measure}]
Write 
\[
\mathcal H_n(\pi):=\exp\!\bigl(\eta_n(V_n(\pi)-\bar v_n)\bigr)\,\frac{\E_\pi[R_{n,J_\pi}]}{r_n}, \quad \text{and}\quad \beta_n=\frac{1}{\E_{\mu_n^{\theta_n}}[\mathcal H_n]}.
\]
Then, $L_n(\pi)=\beta_n L_n^{{\theta_n}}(\pi)\,\mathcal H_n(\pi)$. Therefore, 
\begin{align*}
\TV\!\bigl(\nu^{(p)}_{n,m_n},\mu_n^{\theta_n}\bigr) &= \frac{1}{2}\norm{L_n-L_n^{\theta_n}}_{L^{1}(\nu_n)} \leq \frac{1}{2}\norm{L_n^{\theta_n}}_{L^2(\nu_n)} \norm{\beta_n\mathcal{H}_n-1}_{L^2(\nu_n)}\;.
\end{align*}
Note that $\norm{L_n^{\theta_n}}_{L^2(\nu_n)}$ {is bounded} by Lemma~\ref{lem:Ltheta_Bound}. On the other hand, we note that
\[\norm{\beta_n\mathcal{H}_n-1}_{L^2(\nu_n)} \leq |\beta_n|\norm{\mathcal{H}_n-1}_{L^2(\nu_n)}+ |\beta_n-1|\;.\]

The proof now follows by noting that 
\[
\norm{\mathcal{H}_n-1}_{L^2(\nu_n)} = O_{c, p}(n^{-1/2}), \quad \text{and}\quad \beta_n = 1+ O_{c, p}(n^{-1/2})\;.
\]
{The estimate for $\beta_n$ follows from $\beta_n^{-1}=\E_{\mu_n^{\theta_n}}[\mathcal H_n]$ and Cauchy--Schwarz, using Lemmas~\ref{lem:HnConcentration} and~\ref{lem:Ltheta_Bound}.}
\end{proof}

%%%%%%%%%%%%%%%%%%%%%%%%%%%%%%%%%%%%%%%%%%%%%%%%%%%%%%%%%%%%%%%%%%%%%%%%%%%%%%%%%%%%%%%%%%%%%%%%%%%%%%%%%%%%%%%%%%

\subsubsection{Proof of Proposition~\ref{prop:GaussianProfile}}
\label{sec:SecondMainLemma}
\begin{lemma}\label{lem:tilted-descents}
Under $\mu_n^{\theta_n}$, the random variable $D_n$ has the same law as the sum of independent Bernoulli random variables and 
\[
{\E_{\mu_n^{\theta_n}}(\des(\pi)) = \bar d_n+\sigma_n^2\theta_n+O_{c,p}(1), \qquad \Var_{\mu_n^{\theta_n}}(\des(\pi)) = \sigma_n^2+O_{c,p}(\sqrt n)\;.}
\]
\end{lemma}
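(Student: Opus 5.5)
The plan is to work directly with the product form of the generating function of $D_n$ recorded before Lemma~\ref{lem:cgf}. Under $\mu_n^{\theta_n}$, the law of $D_n$ is the exponential tilt of its uniform law by $e^{\theta_n D_n}$. Hence its probability generating function is
\[
\E_{\mu_n^{\theta_n}}[t^{D_n}]=\frac{\E_{\nu_n}[(te^{\theta_n})^{D_n}]}{\E_{\nu_n}[e^{\theta_n D_n}]}=\prod_{j=1}^{n-1}\frac{1-\alpha_{n,j}+\alpha_{n,j}e^{\theta_n}t}{1-\alpha_{n,j}+\alpha_{n,j}e^{\theta_n}}.
\]
Each factor is the generating function of a Bernoulli variable with parameter $\alpha_{n,j}e^{\theta_n}/(1-\alpha_{n,j}+\alpha_{n,j}e^{\theta_n})\in(0,1)$. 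So under $\mu_n^{\theta_n}$, $D_n$ is distributed as a sum of independent Bernoulli variables. This is the first claim.

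For the moments, I will use the cumulant generating function. The cumulant generating function of $D_n$ under $\mu_n^{\theta_n}$ is $t\mapsto K_n(t+\theta_n)-K_n(\theta_n)$. Differentiating at $t=0$ gives
\[
\E_{\mu_n^{\theta_n}}[D_n]=K_n'(\theta_n), \qquad \Var_{\mu_n^{\theta_n}}(D_n)=K_n''(\theta_n).
\]
By \eqref{eqn:Thetan}, $|\theta_n|=O_{c,p}(n^{-1/2})$, so $|\theta_n|\le 1$ for all large $n$ and Lemma~\ref{lem:cgf} applies. Then \eqref{eq:Kprime-taylor} gives
\[
K_n'(\theta_n)=\bar d_n+\sigma_n^2\theta_n+O(n\theta_n^2)=\bar d_n+\sigma_n^2\theta_n+O_{c,p}(1),
\]
and \eqref{eq:Ksecond-taylor} gives
\[
K_n''(\theta_n)=\sigma_n^2+O(n|\theta_n|)=\sigma_n^2+O_{c,p}(\sqrt n).
\]
For the finitely many small $n$, the bounds are trivial after enlarging the constants, since $D_n\le n$.

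There is no real obstacle in this argument. The only point needing care is the uniformity of the Taylor remainders in Lemma~\ref{lem:cgf}, which is already part of that lemma, together with checking that $\theta_n$ eventually lies in $[-1,1]$.
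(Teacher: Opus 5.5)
Your proposal is correct and follows the paper's proof essentially step for step. You tilt the product form of the descent generating function to get independent Bernoulli factors with parameters $\alpha_{n,j}e^{\theta_n}/(1-\alpha_{n,j}+\alpha_{n,j}e^{\theta_n})$, identify the tilted cumulant generating function as $K_n(t+\theta_n)-K_n(\theta_n)$, and read off the mean and variance from \eqref{eq:Kprime-taylor} and \eqref{eq:Ksecond-taylor} using $\theta_n=O_{c,p}(n^{-1/2})$; your added checks that $|\theta_n|\le 1$ eventually and that small $n$ are trivial are harmless details the paper leaves implicit.
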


\begin{proof}
We compute the moment generating function of $\des(\pi)$ under $\mu_{n}^{\theta_n}$. Note that 
\begin{align}
\label{eqn:MGF-under-tilt}
   \E_{\mu_n^{\theta_n}}[e^{t\des(\pi)}] &= \frac{\E_{\nu_n}[e^{(t+\theta_n)\des(\pi)}]}{\mathbb{E}_{\nu_n}[e^{\theta_n\des(\pi)}]}\;.
\end{align}
Recall that
\[
\E_{\nu_n}[t^{D_n}]=\prod_{j=1}^{n-1}(1-\alpha_{n,j}+\alpha_{n,j}t).
\]
Therefore, the moment generating function of $\des(\pi)$ under $\mu_n^{\theta_n}$ is
\[
{\prod_{j=1}^{n-1}\frac{1-\alpha_{n,j}+\alpha_{n,j}e^{\theta_n+t}}{1-\alpha_{n,j}+\alpha_{n,j}e^{\theta_n}}
=\prod_{j=1}^{n-1}\left(1-\widetilde\alpha_{n,j}+\widetilde\alpha_{n,j}e^t\right),}
\]
{where $\widetilde\alpha_{n,j}=\alpha_{n,j}e^{\theta_n}/(1-\alpha_{n,j}+\alpha_{n,j}e^{\theta_n})$. This is the moment generating function of the sum of independent Bernoulli variables.} Finally, from~\eqref{eqn:MGF-under-tilt}, we compute the cumulant generating function 
\[
\widetilde{K}_n(t):=\log \E_{\mu_n^{\theta_n}}[e^{t\des(\pi)}] = K_n(t+\theta_n)-K_n(\theta_n)\;.
\]
In particular, {using \eqref{eq:Kprime-taylor} and \eqref{eq:Ksecond-taylor}},
\begin{align*}
    \E_{\mu_n^{\theta_n}}[\des(\pi)] &=\widetilde{K}_n'(0)=K_n'(\theta_n) = {\bar{d}_n+ \sigma_n^2\theta_n+O_{c, p}(n|\theta_n|^2) = \bar d_n+\sigma_n^2\theta_n+O_{c,p}(1),} \\
    \Var_{\mu_n^{\theta_n}}[\des(\pi)] &= {\widetilde{K}_n''(0)} = K_n''(\theta_n) = {\sigma_n^2+O_{c, p}(n|\theta_n|) = \sigma_n^2+O_{c,p}(\sqrt n)}\;.
\end{align*}
\end{proof}

Since $\des(\pi)$ is a sum of independent Bernoulli random variables both under $\mu_n^{\theta_n}$ and $\nu_n$, we immediately get a Berry-Esseen bound for the Kolmogorov distance between $\des(\pi)$ (suitably centered and scaled) and the standard Gaussian. 

\begin{lemma}\label{lem:BE-descents}
There exists a constant $C=C(c,p)$ such that
\begin{equation}\label{eq:BE-uniform}
\sup_{x\in\R}\left|\Pbb_{\nu_n}\!\left(\frac{D_n-\bar d_n}{\sigma_n}\le x\right)-\PhiStd(x)\right|\le \frac{C}{\sqrt n},
\end{equation}
and
\begin{equation}\label{eq:BE-tilted}
\sup_{x\in\R}\left|\Pbb_{\mu_n^{\theta_n}}\!\left(\frac{D_n-K_n'(\theta_n)}{\sqrt{K_n''(\theta_n)}}\le x\right)-\PhiStd(x)\right|\le \frac{C}{\sqrt n}
\end{equation}
for all large $n$.
\end{lemma}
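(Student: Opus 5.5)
Both estimates will follow from the classical Berry--Esseen theorem for sums of independent, non-identically distributed random variables. If $S=\sum_j X_j$ with $X_j$ independent Bernoulli($\beta_j$), then
\[
\sup_x\left|\Pbb\!\left(\frac{S-\E S}{\sqrt{\Var S}}\le x\right)-\PhiStd(x)\right|\le C_0\,\frac{\sum_j \E|X_j-\beta_j|^3}{(\Var S)^{3/2}}.
\]
Here $C_0$ is an absolute constant. Since $\E|X_j-\beta_j|^3=\beta_j(1-\beta_j)\bigl((1-\beta_j)^2+\beta_j^2\bigr)\le \beta_j(1-\beta_j)$, the numerator is at most $\Var S$. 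The right-hand side is therefore bounded by $C_0/\sqrt{\Var S}$. The whole argument thus reduces to verifying the Bernoulli-sum structure and a variance lower bound of order $n$ in each case.

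For \eqref{eq:BE-uniform}, we use the real-rootedness of the Eulerian polynomial recalled before Lemma~\ref{lem:cgf}. Under $\nu_n$, $D_n$ is a sum of independent Bernoulli($\alpha_{n,j}$) variables, $j=1,\dots,n-1$. Its mean is $\bar d_n$ and its variance is $\sigma_n^2=(n+1)/12$ by Lemma~\ref{lem:des-moments}. The bound above gives $C_0\sqrt{12/(n+1)}\le C/\sqrt n$.

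For \eqref{eq:BE-tilted}, Lemma~\ref{lem:tilted-descents} shows that under $\mu_n^{\theta_n}$, $D_n$ is a sum of independent Bernoulli($\widetilde\alpha_{n,j}$) variables. From the cumulant generating function $\widetilde K_n(t)=K_n(t+\theta_n)-K_n(\theta_n)$, its mean is exactly $K_n'(\theta_n)$ and its variance is exactly $K_n''(\theta_n)$. These are precisely the centering and scaling in \eqref{eq:BE-tilted}, so no mean or variance correction to the Gaussian is needed. The bound becomes $C_0/\sqrt{K_n''(\theta_n)}$.

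The only point requiring care is a lower bound $K_n''(\theta_n)\ge c' n$ for large $n$. Lemma~\ref{lem:tilted-descents} gives $K_n''(\theta_n)=\sigma_n^2+O_{c,p}(\sqrt n)=\frac{n+1}{12}+O_{c,p}(\sqrt n)$, which is at least $n/24$ for $n\ge n_0(c,p)$. Hence the error is at most $C_0\sqrt{24/n}$. Taking $C=C(c,p)$ to be the larger of the two constants proves the lemma for all large $n$. The dependence on $c,p$ enters only through $n_0$, via the $O_{c,p}(\sqrt n)$ term controlled by $|\theta_n|=O_{c,p}(n^{-1/2})$.
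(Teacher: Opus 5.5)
Your proposal is correct and follows essentially the same argument as the paper. Both write $D_n$ as a sum of independent Bernoulli variables under $\nu_n$ (via real-rootedness of the Eulerian polynomial) and under $\mu_n^{\theta_n}$ (via its exponential tilt), apply Berry--Esseen with the third absolute moments bounded by the variances, and use $\Var(D_n)\asymp n$ under both measures. Your version just makes explicit the two steps the paper leaves implicit: $\E|X_j-\beta_j|^3\le\beta_j(1-\beta_j)$, and $K_n''(\theta_n)\ge n/24$ for large $n$.
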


\begin{proof}
By Lemma~\ref{lem:tilted-descents}, both $D_n$ under $\nu_n$ and $D_n$ under $\mu_n^{\theta_n}$ are sums of independent Bernoulli variables. The classical Berry--Esseen theorem for sums of independent Bernoulli random variables, therefore, yields
\[
\sup_{x\in\R}\left|\Pbb\!\left(\frac{S-\E S}{\sqrt{\Var(S)}}\le x\right)-\PhiStd(x)\right|
\le C_{\mathrm{BE}}\,\frac{\sum_j \E|X_j-\E X_j|^3}{(\Var(S))^{3/2}}\leq \frac{C}{\sqrt{\Var(S)}},
\]
for some universal constant $C>0$.
%For a Bernoulli random variable $X$, one has $\E|X-\E X|^3\le \Var(X)$. Hence, the numerator is at most a constant multiple of the total variance, and the right-hand side is bounded by $C/\sqrt{\Var(S)}$. 
Under $\nu_n$, Lemma~\ref{lem:des-moments} gives $\Var(D_n)=\sigma_n^2\asymp n$. Under $\mu_n^{\theta_n}$, Lemma~\ref{lem:tilted-descents} gives $\Var(D_n)=K_n''(\theta_n)\asymp n$. This proves \eqref{eq:BE-uniform} and \eqref{eq:BE-tilted}.

\end{proof}

%%%%%%%%%%%%%%%%%%%%%%%%%%%%%%%%%%%%%%%%%%%%%%%%%%%%
%%%%%%%%%%%%%%%%%%%%%%%%%%%%%%%%%%%%%%%%%%%%%%%%%%%%
\begin{proof}[Proof of Proposition~\ref{prop:GaussianProfile}]
Set $\lambda:=-\frac{2p-1}{c\sqrt{12}}$ {and} let $\lambda_n:=\sigma_n\theta_n$. Using the definition of $\sigma_n$ and $\theta_n$, we see that
\begin{equation}\label{eq:lambda-rate-clean}
\lambda_n=\lambda+O_{c,p}(n^{-1/2}).
\end{equation}
Since the likelihood ratio 
\[
\frac{d\mu_n^{\theta_n}}{d\nu_n}(\pi)=e^{\theta_n D_n(\pi)-K_n(\theta_n)},
\]
is a monotone function of $D_n$, 
\begin{equation*}\label{eq:TV-threshold}
\TV\!\bigl(\mu_n^{\theta_n},\nu_n\bigr)
=
\left|\mu_n^{\theta_n}(A_n)-\nu_n(A_n)\right|,
\end{equation*}
where $A_n=\{\frac{d\mu_n^{\theta_n}}{d\nu_n}(\pi)\geq 1\}$. Set $t_n:=\frac{K_n(\theta_n)}{\theta_n}$ and note that
\[
A_n
=
\begin{cases}
\{D_n\ge \lceil t_n\rceil\}, & \theta_n>0,\\[0.4em]
\{D_n\le \lfloor t_n\rfloor\}, & \theta_n<0.
\end{cases}
\]
Using \eqref{eq:K-taylor}, we obtain
%\[
%K_n(\theta_n)=\bar d_n\theta_n+\frac12\sigma_n^2\theta_n^2+O(n|\theta_n|^3),
%\]
%and therefore
\begin{equation*}\label{eq:tn-location}
t_n=\bar d_n+\frac12\sigma_n^2\theta_n+O_{c,p}(1).
\end{equation*}
We will now compute $\nu_n(A_n)$ and $\mu_n^{\theta_n}(A_n)$. To this end, we first assume that $\theta_n>0$ and use Lemma~\ref{lem:BE-descents} and the fact that 
\[
\frac{t_n-\bar d_n}{\sigma_n}=\frac{\lambda_n}{2}+O_{c,p}(n^{-1/2})
\]
to conclude that 
\[
{\frac{t_n-K_n'(\theta_n)}{\sqrt{K_n''(\theta_n)}}=-\frac{\lambda_n}{2}+O_{c,p}(n^{-1/2}).}
\]
{Therefore,}
\begin{align*}
\nu_n(A_n) &= 1-\Phi\left(\frac{\lambda_n}{2}\right)+O_{c, p}(n^{-1/2}), \\
\mu_{n}^{\theta_n} (A_n)&= 1-\Phi\left(-\frac{\lambda_n}{2}\right)+O_{c, p}(n^{-1/2})\;.
\end{align*}
%Dividing by $\sigma_n$ gives
%\begin{equation}\label{eq:tn-uniform-standardized}
%\frac{t_n-\bar d_n}{\sigma_n}=\frac{\lambda_n}{2}+O_{c,p}(n^{-1/2}).
%\end{equation}
%Also, by \eqref{eq:Kprime-taylor} and \eqref{eq:Ksecond-taylor},
%\[
%K_n'(\theta_n)=\bar d_n+\sigma_n^2\theta_n+O_{c,p}(1),
%\qquad
%K_n''(\theta_n)=\sigma_n^2+O_{c,p}(\sqrt n).
%\]
%Therefore
%\begin{equation}\label{eq:tn-tilted-standardized}
%\frac{t_n-K_n'(\theta_n)}{\sqrt{K_n''(\theta_n)}}=-\frac{\lambda_n}{2}+O_{c,p}(n^{-1/2}).
%\end{equation}
Here, we used the fact that replacing $t_n$ by $\lceil t_n\rceil$ or $\lfloor t_n\rfloor$ changes these standardized quantities by at most $O(n^{-1/2})$.
% equations \eqref{eq:tn-uniform-standardized} and \eqref{eq:tn-tilted-standardized} remain valid for the actual threshold defining $A_n$.
%
Thus, we conclude that 
\[
\TV\!\bigl(\mu_n^{\theta_n},\nu_n\bigr)
=2\PhiStd\!\left(\frac{\lambda_n}{2}\right)-1+O_{c,p}(n^{-1/2})
=1-2\PhiStd\!\left(-\frac{\lambda_n}{2}\right)+O_{c,p}(n^{-1/2}).
\]
If $\theta_n<0$, then $A_n=\{D_n\le \lfloor t_n\rfloor\}$ and the same calculation gives
\[
\TV\!\bigl(\mu_n^{\theta_n},\nu_n\bigr)
=1-2\PhiStd\!\left(\frac{\lambda_n}{2}\right)+O_{c,p}(n^{-1/2})
=1-2\PhiStd\!\left(-\frac{|\lambda_n|}{2}\right)+O_{c,p}(n^{-1/2}).
\]
Combining the two cases,
\begin{equation}
\label{eq:TV-lambda-n}
\TV\!\bigl(\mu_n^{\theta_n},\nu_n\bigr)
=1-2\PhiStd\!\left(-\frac{|\lambda_n|}{2}\right)+O_{c,p}(n^{-1/2}).
\end{equation}
Finally, by \eqref{eq:lambda-rate-clean},
\[
\frac{|\lambda_n|}{2}=\frac{|2p-1|}{4\sqrt 3\,c}+O_{c,p}(n^{-1/2}),
\]
and since $\PhiStd$ is Lipschitz, replacing $|\lambda_n|/2$ by $|2p-1|/(4\sqrt 3\,c)$ changes the right-hand side of \eqref{eq:TV-lambda-n} by another $O_{c,p}(n^{-1/2})$. This completes the proof of Proposition~\ref{prop:GaussianProfile}.

\end{proof}

%\section{Cutoff in relative entropy, separation and \texorpdfstring{$\ell_{\infty}$}{linfinity}}

% The separation is closely related to the the $\ell_{\infty}$ distance 
% \[
% \norm{\mu-\nu_n}_{\infty} = \max_{\pi\in S_n}\left|1-\frac{\mu(\pi)}{\nu_n(\pi)}\right|\;.
% \]
%It can be easily checked that $\TV(\mu, \nu_n)\leq \sep(\mu)\leq \norm{\mu-\nu_n}_{\infty}$. 
%With slight modifications of the above proof, we can also obtain the cutoff profile for the asymmetric shelf-shuffle with respect to the relative entropy, separation distance, and the $\ell_{\infty}$ distance.

\subsection{Separation profile for asymmetric shelf shuffle}
% \begin{theorem}
% \label{thm:sep-cutoff}
% Fix $p\in(0,1)\setminus\{1/2\}$. Let $q=1-p$ and
% $r=\min\{p, q\}$. Fix $c>0$ and let $m_n=\lfloor c n^2\rfloor$. Then, as $n\to \infty$, we have
% \[
% \operatorname{sep}\!\left(\nu^{(p)}_{n,m_n},\nu_n\right)
% \longrightarrow
% 1-\exp\left\{-\frac{|2p-1|}{2c}\right\}.
% \]
% \end{theorem}

\begin{proof}[Proof of Theorem~\ref{thm:sep-cutoff}]
Recall that $L_{n,m}(\pi)=\frac{\nu^{(p)}_{n,m}(\pi)}{\nu_n(\pi)} = n!\nu^{(p)}_{n,m}(\pi)$ and note that
\[
\operatorname{sep}(\nu_{n, m}^{(p)})=1-\min_{\pi\in S_n}L_{n,m}(\pi)\;.
\]
It suffices to show that $\min_{\pi\in S_n}L_{n,m}(\pi)$ converges to $\exp\left\{-\frac{|2p-1|}{2c}\right\}$. To this end, we recall that for any $m\geq n$, we have
\[
L_{n,m}(\pi)
=
\frac{n!}{m^n}\binom{m}{n}F_\pi(\tau)\,\mathbb E_\pi[R_{n,J_\pi}]\;,
\]
where {$\tau = \frac{n}{m-n+1}$}.
% where
% \[
% A_{n,m}:=\frac{n!}{m^n}\binom{m}{n}
% =\prod_{\ell=0}^{n-1}\left(1-\frac{\ell}{m}\right),
% \]
% \(J_\pi\) has probability generating function
% \[
% \mathbb E_\pi[z^{J_\pi}]=\frac{F_\pi(\tau z)}{F_\pi(\tau)},
% \]
% and
% \[
% R_{n,j}:=\prod_{\ell=0}^{j-1}
% \frac{1-\ell/n}{1+\ell/M}.
% \]
We follow the same argument as in the proof of Proposition~\ref{prop:descent_tilt_measure}, but with $m=\lfloor c n^2\rfloor$ and therefore $\tau=\frac1{cn}+O_{c, p}(n^{-2})$. 

We first show that {$\mathbb E_{\pi}[R_{n, J_{\pi}}]=1+O_{c, p}(n^{-1})$} uniformly in $\pi$. The proof is similar to the one in the previous sections, {so} we skip the details and only sketch an outline. To this end, we use the fact that $m\asymp n^2$ to get  
\[
0\leq 1-R_{n, j}\leq C\min\{1, j^2/n\}, \qquad \text{for all } j\leq n-1
\]
for some universal constant $C>0$. Now recall from Lemma~\ref{lem:J-description} that $J_{\pi}$ is a sum of at most $(n-1)$ independent Bernoulli random variables {with parameters of the form $s\tau/(1+s\tau)$, where $s\in\{p,q,1\}$,} and $\tau =\frac{1}{cn}+O(n^{-2})$. Therefore,
\[
\sup_{\pi\in S_n}\mathbb E_\pi[J_\pi]=O(1),
\qquad
\sup_{\pi\in S_n}\mathbb E_\pi[J_\pi^2]=O(1).
\]
Arguing as in the proof of Lemma~\ref{lem:R-factor-constant}, with obvious modifications, one can show that
\[
\sup_{\pi\in S_n}
\left|\mathbb E_\pi[R_{n,J_\pi}]-1\right|
=O_{c, p}(n^{-1}).
\]
Therefore, asymptotically, it suffices to find the minimizer of $F_\pi(\tau)$. Suppose first that $p>q$. Let {$\rho_n=(n,n-1,\ldots,1)$} be the reverse permutation. Then $F_{\rho_n}(\tau)=(1+q\tau)^{n-1}${.}
Using the fact that $v\leq a$ for any permutation $\pi$, we conclude that {for} any $\pi\in S_n$ we have
\[
\frac{F_\pi(\tau)}{F_{\rho_n}(\tau)}
=\left(\frac{1+p\tau}{1+q\tau}\right)^{a-v}
\left(
\frac{1+\tau}{(1+q\tau)^2}
\right)^v >1\;,
\]
when $n$ is sufficiently large. 
{Therefore}, $\min_{\pi\in S_n}F_{\pi}(\tau)=F_{\rho_n}(\tau) = (1+q\tau)^{n-1}$ for $n$ sufficiently large. 
When $p<q$ and $n$ is sufficiently large, the same argument with the identity permutation $\iota_n=(1,2,\ldots,n)$ gives $\min_{\pi\in S_n} F_{\pi}(\tau) = F_{\iota_n}(\tau)=(1+p\tau)^{n-1}$.
We conclude that for $n$ sufficiently large, we have
\[
\min_{\pi\in S_n}F_\pi(\tau)=(1+r\tau)^{n-1},
\]
where $r=\min\{p, q\}$. Thus, 
\[
\min_{\pi\in S_n}L_{n,m}(\pi)
=
\frac{n!}{m^n}\binom{m}{n}(1+r\tau)^{n-1}\left(1+O(n^{-1})\right).
\]
Now set $A_{n,m}=\frac{n!}{m^n}\binom{m}{n}$ and observe that
\[
\log A_{n,m}
=
\sum_{\ell=0}^{n-1}\log\left(1-\frac{\ell}{m}\right)
=
-\frac{n(n-1)}{2m}+O_{c, p}\left(\frac{n^3}{m^2}\right)
=
-\frac1{2c}+O_{c, p}(n^{-1}),
\]
and
\[
(n-1)\log(1+r\tau)
=
\frac r c+O_{c, p}(n^{-1}).
\]
Therefore
\[
\min_{\pi\in S_n}L_{n,m}(\pi)
\longrightarrow
\exp\left\{-\frac1{2c}+\frac r c\right\}
=
\exp\left\{-\frac{1/2-r}{c}\right\}
=
\exp\left\{-\frac{|2p-1|}{2c}\right\}.
\]
This completes the proof.
\end{proof}

\subsection{Relative entropy profile for the asymmetric shelf shuffle}

\begin{proof}[Proof of Theorem~\ref{thm:entropy-cutoff}]
Recall that {$L_n(\pi)=\beta_nL_n^{\theta_n}(\pi)\mathcal H_n(\pi)$}. Write {$\widetilde{H}_n = \beta_n \mathcal H_n$} for notational convenience. Therefore, we can write 
\[
\Ent(\nu_{n, m}^{(p)}) = \Ent(\mu_{n}^{\theta_n})+ {\mathbb E_{\nu_n}}[(\widetilde{H}_n-1)L_n^{\theta_n}(\pi)\log L_n^{\theta_n}(\pi)]+ \mathbb{E}_{\nu_n}(L_n^{\theta_n}(\pi)\widetilde{H}_n(\pi)\log\widetilde{H}_n(\pi)).
\]
In Lemma~\ref{lem:Ltheta_Bound}, we show that $\mathbb{E}_{\nu_n}[(L_n^{\theta_n})^2]\leq C$ for some constant $C$. However, the proof is readily {adapted} to show that $\mathbb{E}_{\nu_n}[{(L_n^{\theta_n})^2(\log L_n^{\theta_n})^2}]\leq C$. Indeed, for $x\geq 1$, define 
\[
G_n(x) := \mathbb{E}_{\nu_n}[(L_n^{\theta_n})^{x}] = \exp\left(K_n(x\theta_n)-xK_n(\theta_n)\right)\;.
\]
Observe that $G_n''(2) = \mathbb{E}_{\nu_n}[{(L_n^{\theta_n})^2(\log L_n^{\theta_n})^2}]$. On the other hand, we note that 
\[
G_n''(2) = G_n(2)[\theta_nK_n'(2\theta_n)-K_n(\theta_n)]^2+ G_n(2)[\theta_n^2K_n''(2\theta_n)]\;.
\]
Using the Taylor expansion of $K_n$, it is easy to verify that 
\[{[\theta_nK_n'(2\theta_n)-K_n(\theta_n)]^2=O_{c, p}(1),\qquad \theta_n^2K_n''(2\theta_n)=O_{c,p}(1).}\] 
The desired claim therefore follows by noticing that $G_n(2)=\mathbb{E}_{\nu_n}[(L_n^{\theta_n})^2]\leq C$. On the other hand, the proof of Lemma~\ref{lem:HnConcentration} shows that $\norm{\widetilde{H}_n-1}_{L^2(\nu_n)}=\norm{{\beta_n\mathcal H_n}-1}_{L^2(\nu_n)}=O(n^{-1/2})$. Therefore, the Cauchy-Schwarz inequality gives
\[
|{\mathbb E_{\nu_n}}[(\widetilde{H}_n-1)L_n^{\theta_n}(\pi)\log L_n^{\theta_n}(\pi)]|= O_{c, p}(n^{-1/2})\;.
\]

We next show that $\mathbb{E}_{\nu_n}(L_n^{\theta_n}(\pi)\widetilde{H}_n(\pi)\log\widetilde{H}_n(\pi))\to 0$ as $n\to \infty$. To this end, we first note that 
\[
\mathbb{E}_{\mu_n^{\theta_n}}[|\widetilde{H}_n-1|] =\mathbb{E}_{\nu_n}[L_n^{\theta_n}|\widetilde{H}_n-1|] \leq \norm{L_n^{\theta_n}}_{L^2(\nu_n)}\norm{\widetilde{H}_n-1}_{L^2(\nu_n)}\to 0,
\]
as $n\to \infty$ thanks to Lemma~\ref{lem:HnConcentration} and Lemma~\ref{lem:Ltheta_Bound}. This proves that $\widetilde{H}_n$ converges to $1$ in probability with respect to $\mu_n^{\theta_n}$. Since $x\mapsto x\log x$ is continuous (we use the convention that $0\log 0=0$) and $\widetilde{H}_n$ is uniformly bounded, we conclude that 
\[
\mathbb{E}_{\nu_n}(L_n^{\theta_n}(\pi)\widetilde{H}_n(\pi)\log\widetilde{H}_n(\pi))=\mathbb{E}_{\mu_n^{\theta_n}}(\widetilde{H}_n(\pi)\log\widetilde{H}_n(\pi))\to 0,
\]
as $n\to \infty$. 

Therefore, it suffices to show that $\Ent(\mu_n^{\theta_n})\to \frac{(2p-1)^2}{24c^2}$ as $n\to \infty$. To this end, we observe that 
\begin{align*}
    \Ent(\mu_n^{\theta_n}) &=\E_{\nu_n}[L_n^{\theta_n}(\pi)\log L_n^{\theta_n}(\pi)]\\
    &=\E_{\nu_n}\left[ \frac{e^{\theta_n\des(\pi)}}{e^{K_n(\theta_n)}}\left(\theta_n\des(\pi)-K_n(\theta_n)\right)\right]\\
    &= \theta_nK_n'(\theta_n)-K_n(\theta_n)\;.
\end{align*}
Once again using the Taylor expansion of $K_n$ and the fact that $\theta_n=-\frac{2p-1}{c\sqrt{n}}+O(n^{-1})$ and $\sigma_n^2=(n+1)/12$, we conclude that 
\[
\theta_nK_n'(\theta_n)-K_n(\theta_n) = \frac{1}{2}\sigma_n^2\theta_n^2 = \frac{(2p-1)^2}{24c^2}+{O_{c,p}(n^{-1/2})}\;,
\]
which completes the proof.

\end{proof}

%%%%%%%%%%%%%%%%%%%%%%%%%%%%%%%

\bibliographystyle{alpha} % can use amsalpha but repeated authors are dashed
\bibliography{references}

@article {chen2025cutoff,
    AUTHOR = {Chen, Ray and Ottolini, Andrea},
     TITLE = {Cutoff in total variation for the shelf shuffle},
   JOURNAL = {Electron. Commun. Probab.},
  FJOURNAL = {Electronic Communications in Probability},
    VOLUME = {30},
      YEAR = {2025},
     PAGES = {Paper No. 44, 10},
      ISSN = {1083-589X},
   MRCLASS = {60J10},
  MRNUMBER = {4908782},
       DOI = {10.1214/25-ecp691},
       URL = {https://doi.org/10.1214/25-ecp691},
}

@unpublished{kuba,
  title={On Card guessing after a single shelf shuffle},
  author={Clay, Alexander  and Kuba, Markus and Tripathi, Raghavendra},
  year={2026},
  note={arXiv:2602.12928},
  publisher={arXiv}
}

@unpublished{clay2025guessing,
  title={Guessing Strategies for Shuffling Machines},
  author={Clay, Alexander},
  journal={arXiv preprint},
  note={arXiv:2507.10294},
  year={2025}
}

@unpublished{clay2025limit,
  title={Limit Theorems for Descents and Inversions of Shelf-Shuffles},
  author={Clay, Alexander},
  journal={arXiv preprint},
  note={arXiv:2510.00343},
  year={2025}
}

@unpublished{tripathi2026position,
  title={On the position matrix of single-shelf shuffle and card guessing},
  author={Tripathi, Raghavendra},
  journal={arXiv preprint},
  note={arXiv:2602.07920},
  year={2026}
}

@article {Pitman97,
    AUTHOR = {Pitman, Jim},
     TITLE = {Probabilistic bounds on the coefficients of polynomials with
              only real zeros},
   JOURNAL = {J. Combin. Theory Ser. A},
  FJOURNAL = {Journal of Combinatorial Theory. Series A},
    VOLUME = {77},
      YEAR = {1997},
    NUMBER = {2},
     PAGES = {279--303},
      ISSN = {0097-3165,1096-0899},
   MRCLASS = {05A16 (05A20 60C05 60G99)},
  MRNUMBER = {1429082},
MRREVIEWER = {Edward\ A.\ Bender},
       DOI = {10.1006/jcta.1997.2747},
       URL = {https://doi.org/10.1006/jcta.1997.2747},
}

@article {DiaconisFulmanHolmes2013,
    AUTHOR = {Diaconis, Persi and Fulman, Jason and Holmes, Susan},
     TITLE = {Analysis of casino shelf shuffling machines},
   JOURNAL = {Ann. Appl. Probab.},
  FJOURNAL = {The Annals of Applied Probability},
    VOLUME = {23},
      YEAR = {2013},
    NUMBER = {4},
     PAGES = {1692--1720},
      ISSN = {1050-5164,2168-8737},
   MRCLASS = {60C05 (05A15)},
  MRNUMBER = {3098446},
MRREVIEWER = {Kent\ E.\ Morrison},
       DOI = {10.1214/12-aap884},
       URL = {https://doi.org/10.1214/12-aap884},
}

@book {DiaconisFulman2023Shuffling,
    AUTHOR = {Diaconis, Persi and Fulman, Jason},
     TITLE = {The mathematics of shuffling cards},
 PUBLISHER = {American Mathematical Society, Providence, RI},
      YEAR = {[2023] \copyright 2023},
     PAGES = {xii+346},
      ISBN = {[9781470463038]},
   MRCLASS = {60-02 (05Axx 60B15 60J10 91A60)},
  MRNUMBER = {4565368},
MRREVIEWER = {Martin\ V.\ Hildebrand},
}

@article {bayer1992trailing,
    AUTHOR = {Bayer, Dave and Diaconis, Persi},
     TITLE = {Trailing the dovetail shuffle to its lair},
   JOURNAL = {Ann. Appl. Probab.},
  FJOURNAL = {The Annals of Applied Probability},
    VOLUME = {2},
      YEAR = {1992},
    NUMBER = {2},
     PAGES = {294--313},
      ISSN = {1050-5164,2168-8737},
   MRCLASS = {60C05 (20B30 60B15)},
  MRNUMBER = {1161056},
MRREVIEWER = {David\ J.\ Aldous},
       URL =
              {http://links.jstor.org/sici?sici=1050-5164(199205)2:2<294:TTDSTI>2.0.CO;2-F&origin=MSN},
}

@article {aldous1986shuffling,
    AUTHOR = {Aldous, David and Diaconis, Persi},
     TITLE = {Shuffling cards and stopping times},
   JOURNAL = {Amer. Math. Monthly},
  FJOURNAL = {American Mathematical Monthly},
    VOLUME = {93},
      YEAR = {1986},
    NUMBER = {5},
     PAGES = {333--348},
      ISSN = {0002-9890,1930-0972},
   MRCLASS = {60C05 (60B15 60G40 60J15)},
  MRNUMBER = {841111},
MRREVIEWER = {Endre\ Cs\'aki},
       DOI = {10.2307/2323590},
       URL = {https://doi.org/10.2307/2323590},
}

@unpublished{salez2025modern,
  title={Modern aspects of Markov chains: entropy, curvature and the cutoff phenomenon},
  author={Salez, Justin},
  journal={arXiv preprint},
  note={arXiv:2508.21055},
  year={2025}
}

@article {sellke2022cutoff,
    AUTHOR = {Sellke, Mark},
     TITLE = {Cutoff for the asymmetric riffle shuffle},
   JOURNAL = {Ann. Probab.},
  FJOURNAL = {The Annals of Probability},
    VOLUME = {50},
      YEAR = {2022},
    NUMBER = {6},
     PAGES = {2244--2287},
      ISSN = {0091-1798,2168-894X},
   MRCLASS = {60C05 (20P05 60B15 60J10)},
  MRNUMBER = {4499839},
MRREVIEWER = {Wojciech\ Bartoszek},
       DOI = {10.1214/22-aop1582},
       URL = {https://doi.org/10.1214/22-aop1582},
}

@article {FKLP,
    AUTHOR = {Fulman, Jason and Kim, Gene B. and Lee, Sangchul and Petersen,
              T. Kyle},
     TITLE = {On the joint distribution of descents and signs of
              permutations},
   JOURNAL = {Electron. J. Combin.},
  FJOURNAL = {Electronic Journal of Combinatorics},
    VOLUME = {28},
      YEAR = {2021},
    NUMBER = {3},
     PAGES = {Paper No. 3.37, 30},
      ISSN = {1077-8926},
   MRCLASS = {05A15 (60F05)},
  MRNUMBER = {4301305},
MRREVIEWER = {David\ Bevan},
       DOI = {10.37236/10222},
       URL = {https://doi.org/10.37236/10222},
}

@article {Alperen,
    AUTHOR = {\"Ozdemir, Alperen},
     TITLE = {Martingales and descent statistics},
   JOURNAL = {Adv. in Appl. Math.},
  FJOURNAL = {Advances in Applied Mathematics},
    VOLUME = {140},
      YEAR = {2022},
     PAGES = {Paper No. 102395, 34},
      ISSN = {0196-8858,1090-2074},
   MRCLASS = {60C05 (05A15 60G42)},
  MRNUMBER = {4443762},
MRREVIEWER = {David\ Bevan},
       DOI = {10.1016/j.aam.2022.102395},
       URL = {https://doi.org/10.1016/j.aam.2022.102395},
}

@article {FS14Enum,
    AUTHOR = {Fewster, Christopher J. and Siemssen, Daniel},
     TITLE = {Enumerating permutations by their run structure},
   JOURNAL = {Electron. J. Combin.},
  FJOURNAL = {Electronic Journal of Combinatorics},
    VOLUME = {21},
      YEAR = {2014},
    NUMBER = {4},
     PAGES = {Paper 4.18, 19},
      ISSN = {1077-8926},
   MRCLASS = {05A05 (05A15)},
  MRNUMBER = {3284067},
MRREVIEWER = {Darko\ Veljan},
       DOI = {10.37236/4235},
       URL = {https://doi.org/10.37236/4235},
}

\end{document}